\newcolumntype{B}[1]{>{\centering\arraybackslash}m{#1}}
\newcommand{\N}{\mathbb{N}}
\newcommand{\R}{\mathbb{R}}
\newcommand{\C}{\mathbb{C}}
\newcommand{\di}{\, \mathrm{d}}
\newcommand{\DM}{\mathrm{D}}
\newcommand{\dx}{\, \mathrm{d}x}
\newcommand{\Leb}{\; \! \mathcal{L}}
\newcommand{\Reg}{\mathcal{R}}
\newcommand{\TV}{\textnormal{TV}}
\newcommand{\W}{\mathcal{W}}
\newcommand{\M}{\mathcal{M}(\Omega,\R^m)}
\newcommand{\CO}{C_0(\Omega,\R^m)}
\newcommand{\Cinf}{C_c^\infty(\Omega,\R^m)}
\newcommand{\BV}{\textnormal{BV}(\Omega)}
\newcommand{\lone}{\ell^1(\R^m)}
\newcommand{\linf}{\ell^{\infty}(\R^m)}
\newcommand{\G}{\mathcal{G}}
\newcommand{\ubold}{\boldsymbol{u}}
\newcommand{\nlone}{\|_{\ell^1(\R^m)}}
\newcommand{\nM}{\|_{\mathcal{M}}}
\newcommand{\B}{\mathcal{B}(\Omega)}
\newcommand{\divergence}{\textnormal{div}}
\newcommand{\D}{\textnormal{D}}
\newcommand{\DKL}{\textnormal{D}_{\textnormal{KL}}}
\newcommand{\prox}{\mathrm{prox}}
\newcommand{\proj}{\mathrm{proj}}
\newcommand{\ICB}{\mathrm{ICB}_{\mathrm{TV}}}
\newcommand{\ICBM}{\mathrm{ICB}_{\| \cdot \nM}}
\newcommand{\Real}{\mathrm{Re}}
\DeclareMathOperator*{\argmin}{arg\,min}
\newtheorem{mydef}{Definition}
\newtheorem{mythm}{Theorem}
\newtheorem{myprop}{Proposition}
\theoremstyle{definition}
\newtheorem{myrem}{Remark}
\numberwithin{equation}{section}
\begin{document}
\title{Joint Reconstruction via Coupled Bregman Iterations \\ with Applications to PET-MR Imaging}
\author{Julian Rasch\thanks{Applied Mathematics M\"unster: Institute for Analysis and Computational Mathematics, 
Westf\"alische Wilhelms-Universit\"at (WWU) M\"unster. Einsteinstr. 62, D 48149 M\"unster, Germany. e-mail: julian.rasch@wwu.de}, Eva-Maria Brinkmann\footnotemark[1], Martin Burger\footnotemark[1]}
\maketitle
\begin{abstract}
Joint reconstruction has recently attracted a lot of attention, especially in the field of medical multi-modality imaging such as PET-MRI. 
Most of the developed methods rely on the comparison of image gradients, or more precisely their location, direction and magnitude, to make use of structural similarities between the images. 
A challenge and still an open issue for most of the methods is to handle images in entirely different scales, i.e. different magnitudes of gradients that cannot be dealt with by a global scaling of the data. 
We propose the use of generalized Bregman distances and infimal convolutions thereof with regard to the well-known total variation functional. 
The use of a total variation subgradient respectively the involved vector field rather than an image gradient
naturally excludes the magnitudes of gradients, which in particular solves the scaling behavior. 
Additionally, the presented method features a weighting that allows to control the amount of interaction between channels. 
We give insights into the general behavior of the method, before we further tailor it to a particular application, namely PET-MRI joint reconstruction. 
To do so, we compute joint reconstruction results from blurry Poisson data for PET and undersampled Fourier data from MRI and show that we can gain a mutual benefit for both modalities.
In particular, the results are superior to the respective separate reconstructions and other joint reconstruction methods. \\

\noindent {\bf Keywords: } Joint reconstruction, Bregman iterations, positron emission tomography, magnetic resonance imaging, structural similarity, infimal convolution of Bregman distances, total variation
\end{abstract}


\section{Introduction}

In the last century, the need for better medical diagnostics has been the driving force for developing a large variety of medical imaging systems. 
Accordingly, nowadays it is common practice to perform several experiments on the same patient, e.g. MRI, PET and CT scans, in order to reduce the risk of false interpretation of the measured data. 
To do so, however, one needs reconstruction methods that can exploit the connections between different types of data sets efficiently. 

In general, a wide range of imaging applications requires the solution of an inverse problem 
\begin{align}
Ku = f,
\label{eq:SingleInverseProblem}
\end{align}
where typically $u \colon \Omega \subset \R^d \to \R$ describes a property such as a density of the investigated unknown object. 
A common issue for most of these problems is a general ill-posedness, which makes it difficult to solve for $u$. 
This is mainly due to the properties of the imaging operator $K$ and the possibly poor quality of the available data $f$. 
In order to nevertheless obtain a reasonable solution, it is helpful to gather additional knowledge about $u$. 
This knowledge can be a rather abstract assumption on its properties as a mathematical object, such as being smooth, 
piecewise constant, sparse etc., which is often encoded in the choice of function spaces for the inversion of (\ref{eq:SingleInverseProblem}). 
This process is usually referred to as regularization.
On the contrary one could as well imagine to extend the knowledge about the investigated object by performing several experiments, 
such that one has access to multiple data sets $f_i$, and one has to solve a series of inverse problems for $i=1, \dots, N$,
\begin{align}
K_i u_i = f_i. 
\label{eq:InverseProblem}
\end{align} 
This includes the case of a simple repetition of the same experiment as (\ref{eq:SingleInverseProblem}), 
but as well a change of the experimental setup or the imaging modality, such that one has very different imaging operators $K_i$ and resulting data sets $f_i$.  

Whenever one can assume a relation between the $u_i$, one can try to exploit that connection in order to improve the individual inversions.
The process and the challenge of coupling several inverse problems during the inversion is known as \textit{joint reconstruction} or \textit{joint inversion} 
\cite{Ehrhardt:JointReconstruction,Gallardo:Multiple, Gallardo:Characterization,Gallardo:Joint,Gallardo:Structure,Haber:ModelFusion,Haber:JointInversion}, 
and has attracted a lot of attention in geophysical applications, medical imaging and color image processing. 

The approaches can essentially be divided into two classes: 
either some of the $u_i$ are known a-priori and serve as additional information for the reconstruction of the others in a static mode, or we aim to recover all $u_i$ simultaneously. 
The former is sometimes referred to as model fusion \cite{Haber:ModelFusion} or structural and anatomical priors \cite{Bowsher:Bayesian,Chan:Regularized, Kaipio:InverseProblems, Nuyts:Mutual}. 
It arises e.g in atlas-based reconstructions or attenuation correction in PET-CT, where the attenuation of photons is estimated via a CT image and included in the reconstruction of the PET image \cite{WernickAarsvold}. 
Another application is anatomical MRI priors for quantitative PET imaging \cite{Atre:Evaluation, Leahy1991, Lipinski:Expectation, Vunckx:Evaluation}. 
We will herein focus on a simultaneous reconstruction of all $u_i$, though small adaptions of the proposed approach as well allow for the use of static priors to perform model fusion.

A wide range of recently developed techniques makes use of a similar structure or shape of the images.
Assuming a shared edge set, a common approach is to compare the gradients of the images with regard to position, direction and magnitude. 
For example, Ehrhardt et al. \cite{Ehrhardt:JointReconstruction, Ehrhardt:VectorValuedImageProcessing} derived a measure for the similarity of image gradients, 
called Parallel Level Sets (PLS), using the definition of the Euclidean inner product:
\begin{align}
\int_\Omega |\nabla u||\nabla v| - | \nabla u \cdot \nabla v |\dx = \int_\Omega |\nabla u||\nabla v| (1 - | \cos(\varphi) | )\dx, 
\label{eq:ParallelLevelsets}
\end{align}
where $\varphi$ denotes the angle between $\nabla u$ and $\nabla v$.
A variant thereof is related to a work of Gallardo et al. \cite{Gallardo:Characterization}, using the cross-product of gradients as a measure for structural similarity of images. 
Yet another related approach on coupling of image structures has been proposed by Knoll et al. \cite{Knoll2017} who extended the well-known TV functional \cite{ROF} and its extensions \cite{Bredies2010} to vector-valued data $\ubold = (u_1, \dots, u_n)$, e.g.
\begin{align*}
\TV(\ubold) = \Big\| \| \nabla \ubold \|_M \Big\|_1 = \int_\Omega \|\nabla \ubold(x) \|_M \dx.
\end{align*} 
The important part is then the choice of an appropriate matrix norm $\| \cdot \|_M$ for the point-wise coupling of the gradients. 
A Euclidean norm (or Frobenius norm) leads to a joint TV setting, i.e. 
\begin{align*}
\int_\Omega \|\nabla \ubold(x) \|_2 \dx = \int_\Omega  \left( \sum\nolimits_i |\nabla u_i(x)|^2 \right)^{\frac{1}{2}} \dx, 
\end{align*}
which promotes joint positions of non-zero elements of the gradients. 
Another possible choice is the nuclear norm, i.e. the sum of the singular values of $\nabla \ubold$, which prefers linearly dependent image gradients. \\

All these methods perform well on the positioning and alignment of image gradients. 
However, since they all depend on the magnitude of the gradients, they have difficulties comparing images on different scales, in particular if a global scaling of the data or images is not possible. 
This is e.g. the case if the images share some jumps of equal height, but one image also features small jumps where the others have a large one.
During the reconstruction, this leads to a penalization of the gradient height (and hence image intensities) rather than to an alignment of image gradients. 
This is a significant drawback and still an open issue. 

A particular application where this problem arises is the coupling of Positron Emission Tomography (PET) and Magnetic Resonance Imaging (MRI), since by now both techniques are available in a single device. 
They are able to simultaneously gather functional PET data and structural MR data, which are adjusted in spatial and temporal terms. 
Though inherently different in their physical interpretation, both data sets stem from the same investigated object.
As function is not independent from the underlying anatomy \cite{Ehrhardt:JointReconstruction}, we can hence assume a similar structure in both the PET and MR image. 
However, the images tend to have entirely different scales, and jumps across the edges do not share the same height and sign. 

To overcome that issue, we reformulate a recent work on color image processing \cite{Moeller:ColorBregmanTV} and 
further extend it to a general joint reconstruction approach using the notion of generalized Bregman distances \cite{Burger:BregmanDistancesInInverseProblems}:
\begin{align*}
\D_{J}^p(v,u) = J(v) - J(u) - \langle p, v-u \rangle, \quad p \in \partial J(u). 
\end{align*}
In case of $J = \TV$ the Bregman distance can formally be rewritten as 
\begin{align}
D_{\TV}^p(v,u) = \int_\Omega |\nabla v| \left( 1 - \dfrac{\nabla v}{|\nabla v|} \cdot \dfrac{\nabla u}{|\nabla u|} \right) \dx =  \int_\Omega |\nabla v| \left( 1 - \cos(\varphi) \right) \dx,
\label{eq:AlignmentLevelSets}
\end{align}
with $\varphi$ denoting the angle between $\nabla u$ and $\nabla v$. 
Hence, from a geometric viewpoint, it penalizes the total variation of the first argument $v$, weighted by the alignment of its gradient to the gradient of the second argument $u$. 
In particular, there is no penalty for entirely aligned image gradients ($\cos(\varphi) = 1$), independent of the height of the jump in $v$. 
Equally relevant is the normalization of the gradient of $u$, which {\it excludes} its magnitude from the functional.
This is the key feature that allows for the comparison of images in different scales. 
We point out that the Bregman distance obviously is an asymmetric measure which uses the subgradient as a-priori information. 
We hence follow \cite{Moeller:ColorBregmanTV} and propose the following iterative scheme for a joint reconstruction:
\begin{align}
u_i^{k+1} \in \argmin_{u_i \in \BV} \Bigg\{ \alpha_i H_{f_i}(K_i u_i) + \sum_{j=1}^N w_{ij} D_{\TV}^{p_j^k}(u_i, u_j^k) \Bigg\}, 
\label{eq:ColorBregmanIteration2}
\end{align}
$i = 1, \dots, N$.
Here, the data fidelity terms $H_{f_i}$ enforce a closeness of $u_i$ to the data $f_i$ and the parameters $\alpha_i$ balance between data accuracy and regularization.
In each step of the procedure we solve the latter problem for one $u_i^{k+1}$, using the subgradients of the other $u_j^k$ from the previous iteration as a-priori edge information, thus fitting the edge sets of $u_i^{k+1}$ to the edge sets of all $u_j^k$. 
The matrix $W = (w_{ij}) \in \R^{N\times N}$ serves as a weighting between the different channels and is chosen such that its rows are normalized. 
It allows for different amounts of influence of the different channels and can be adapted to the particular joint reconstruction situation, i.e. according to different data accuracy in different channels, number of channels etc.

However, the iteration scheme \eqref{eq:ColorBregmanIteration2} features a drawback.
A closer look at equation \eqref{eq:AlignmentLevelSets} reveals that it highly penalizes opposing vectors ($\cos(\varphi) = -1$). 
Simply speaking, it favors that jumps across the edges share the same sign. 
In view of different image contrasts, e.g. for medical multi-modality imaging, this is too restrictive.
We therefore again follow \cite{Moeller:ColorBregmanTV} and employ the use of the infimal convolution of Bregman distances, 
to gain a measure favoring linearly dependent image gradients without the edge orientation constraint:
\begin{align*}
\ICB^p(v,u) := [\D_\TV^{p}(\cdot, u) \Box \D_\TV^{-p}(\cdot, -u)](v) = \inf_{v = \phi + \psi} \Big\{ \D_\TV^{p}(\phi, u) + \D_\TV^{-p}(\psi, -u) \Big\}. 
\end{align*}
That way we can find a local decomposition of $v$ such that one part matches the subgradient $p$, and the other part matches the negative subgradient $-p$. 
Without further explanation at this point, we propose the following iteration scheme for a joint reconstruction:
\begin{align}
u_i^{k+1} \in \argmin_{u_i \in \BV} &\Bigg\{ \alpha_i H_{f_i}(K_i u_i) +  w_{ii} D_{\TV}^{p_i^k}(u_i, u_i^k) + \sum_{\substack{j=1 \\ j \neq i}}^N w_{ij} \ICB^{p_j^k}(u_i,u_j^k) \Bigg\}. 
\label{eq:JointReconstructionScheme}
\end{align}
Similarly to \eqref{eq:ColorBregmanIteration2} we enforce a common edge set of the $u_i$ during the iterations, now excluding the orientation of edges.
We point out the use of the Bregman distance on the own channel (or the diagonal), since the edge orientations should matter here.\\ 

The contributions of this paper are twofold: 
on the theoretical side we analyze the structure and behavior of Bregman distances and line out their applicability to a structural joint reconstruction.
We start with the case of infinite $\ell^1$-sequences and the $\ell^1$-norm, and generalize the results to a continuous setting using the notion of finite Radon measures. 
In particular we compute a closed-form representation of the infimal convolution of two Bregman distances with respect to the total variation of a Radon measure and subgradients with opposing sign.
These results are used to finally investigate Bregman distances of distributional derivatives of functions of bounded variation.
On the applied side we show how to solve the derived method numerically, and apply it to the recent problem of PET-MRI joint reconstruction. 
We line out the necessary mathematical modeling of the joint reconstruction problem, reformulate our method in this context and show numerical results on artificial PET-MR data.
In particular we compare with two recent existing joint reconstruction methods, namely Parallel Level Sets \cite{Ehrhardt:JointReconstruction} and Joint Total Variation \cite{Haber:ModelFusion}. 

The remainder of the paper is organized as follows: 
In Section \ref{sec:methods} we derive the method in detail from a geometric viewpoint and describe its behavior analytically, before we show how to numerically solve it in Section \ref{sec:numerics}.
In Section \ref{sec:petmri} we then outline the idea and modeling of PET-MRI joint reconstruction and present our method tailored to that particular application.  
We continue with numerical results for PET-MRI joint reconstruction and then give a short conclusion and outlook in Section \ref{sec:conclusion}.

\section{Joint reconstruction via Bregman distances}\label{sec:methods}
In this chapter we investigate the use of Bregman distances and infimal convolutions thereof as methods for structural joint reconstruction. 
We start in a discrete setting with the definition of angles between vectors in a Euclidean space and show their connection to a Bregman distance with respect to the $\ell^1$-norm.  
Subsequently, we generalize the idea to the continuous setting using the notion of finite Radon measures, in order to finally compare the (distributional) gradients of functions of bounded variation. 
Finally, we further motivate the use of the infimal convolution in order to eliminate the occurring edge orientation constraint. 

\subsection{The discrete case: vector sequences}
We aim for a regularizer being able to link the edges or level sets of several real-valued functions, but restrict ourselves to two in the following. 
Simply speaking, the gradient of a function is perpendicular to its level sets. 
Hence we want to compare the geometric features of gradients as a vector, i.e. their location, direction and magnitude, and align them in order to obtain a similar structure in the different image channels. 
For the discrete setting let us introduce the space of summable respectively bounded $\R^m$-valued sequences:
\begin{mydef}
 For the Euclidean norm $| \cdot |$ on $\R^m$ and $m \in \N$ we define 
 \begin{alignat*}{3}
 \lone &= \{(\eta_i)_{i \in \N}, \eta_i \in \R^m && : \sum\nolimits_{i=1}^\infty |\eta_i| < \infty && \},  \\
 \linf &= \{(q_i)_{i \in \N}, q_i \in \R^m && : \ \  \sup_{i \in \N} |q_i| < \infty && \}.
\end{alignat*}
\end{mydef}
\noindent
Note that $\linf$ is the dual of $\lone$ under the pairing 
\begin{align*}
 \langle q, \eta \rangle := \sum_{i=1}^\infty q_i \cdot \eta_i.
\end{align*}
So let $\nu,\eta \in \lone$ be two summable sequences of vectors, and let $\varphi_i$ denote the angle between $\nu_i$ and $\eta_i$ ($\varphi_i := 0$ if one of the vectors is zero). 
Then we know by the geometric definition of the Euclidean inner product that the following relation holds true: 
\begin{align}
0 = \cos(\varphi_i) |\nu_i||\eta_i| - \nu_i \cdot \eta_i \leq |\nu_i||\eta_i| - \nu_i \cdot \eta_i, 
\label{eq:EuclideanDotProd}
\end{align}
with equality if and only if $\nu_i$ and $\eta_i$ are parallel ($\cos(\varphi_i)=1$), and maximum value on the right hand side if and only if $\nu_i$ and $\eta_i$ are anti-parallel ($\cos(\varphi_i) = -1$). 
Hence we can consider the right-hand side of \eqref{eq:EuclideanDotProd} as a symmetric measure for the alignment of vectors which increases with the deviation of the vectors.
We can easily extend it for all $i$ by summing up: 
\begin{align*}
\sum_{i=1}^\infty |\nu_i||\eta_i| - \nu_i \cdot \eta_i.
\end{align*}
However, its use for measuring parallelism of vectors imposes some issues. 
Since it measures the magnitude of both vectors as well, we are facing scaling issues for vectors of different magnitudes (or image gradients for images on different scales). 
Additionally, the measure vanishes if one of the vectors is zero, which results in no coupling if one of the images is flat. 
Note that technically this is an advantage for joint reconstruction, since it avoids the artificial transfer of non-shared structures between the images. 
However, in presence of degraded and noisy data the vanishing of either of the vectors would result in no regularization for the other one, which is not desirable.

We shall instead have a look at specific subgradients rather than gradients to overcome the above issues. 
Recall that the subdifferential of the (isotropic) $\| \cdot \nlone$-norm at $\nu$ is given by 
\begin{align}
\partial \|\nu\nlone = \left\{ q \in \linf :\; |q_i|\leq 1,\, q_i = \dfrac{\nu_i}{|\nu_i|} \text{ if } \nu_i \neq 0 \right\}.
\label{eq:l1_subdifferential}
\end{align}
The associated Bregman distance between $\eta$ and $\nu$ and $q \in \partial \|\nu\nlone$ reads
\begin{align*}
 D_{\| \cdot \nlone}^q (\eta, \nu) = \|\eta\nlone - \langle q, \eta \rangle = \sum_{i=1}^\infty |\eta_i| - q_i \cdot \eta_i =  \sum_{i=1}^\infty |\eta_i| (1 - \cos(\varphi_i) |q_i|).
\end{align*}
We can distinguish two situations. 
If $\nu_i \neq 0$, then $q_i = \nu_i/|\nu_i|$, hence the Bregman distance penalizes the magnitude of $\eta_i$, weighted by its deviation from $\nu_i$ in terms of the included angle $\varphi_i$. 
In particular, there is no penalty on $|\eta_i|$ if $\eta_i$ and $\nu_i$ are aligned. 
We as well remark that the magnitude of $\nu_i$ is excluded from the functional, which allows to compare vectors of different magnitude. 
If $\nu_i = 0$, the associated subgradient is an element of the unit ball. 
In order to minimize, we can again distinguish two cases: 
For a rather small magnitude of $q_i$, the benefit of aligning $q_i$ and $\eta_i$ is rather small in comparison to a small $|\eta_i|$, so we are more likely to simply penalize $|\eta_i|$. 
The larger $q_i$ gets, the more beneficial it is to actually align $q_i$ and $\eta_i$.

At first sight, this concept does not seem to be intuitive, since in theory $q$ can be chosen arbitrarily from the unit ball, and there is no reason to align $\eta_i$ to an arbitrary direction. 
However, in practice we do not choose a random $q$ but instead the procedure itself yields a specific subgradient. 
For standard Bregman iterations \cite{Burger:BregmanDistancesInInverseProblems, Osher:AnIterativeRegularizationMethod}, the subgradient is chosen in relation to the data residual (cf. Equation (\ref{eq:ColorBregmanSubgradients})), thus containing \textit{more} than only information about the last iterate. 
Accordingly, as already indicated in \cite{Moeller:ColorBregmanTV}, the subgradient as well serves as an indicator for structures which are likely to appear in the next iterate. 
Hence, thinking of the vectors as image gradients, the length of $q$ can be interpreted as the possibility of an edge at that location, while its direction implies the direction of the jump across that edge. 

\subsection{The continuous case: $\R^m$-valued Radon measures}
In this section we transfer the idea introduced above to a continuous setting. 
Since for a joint reconstruction we want to compare the gradients of functions, natural solution spaces are the Sobolev spaces of weakly differentiable functions, e.g. $W^{1,1}(\Omega)$.
However, these functions do not permit discontinuities. 
In order to allow for jump discontinuities, and hence sharp edges in the reconstructions, the canonical choice is the space of functions of bounded variation. 
Since the (now distributional) derivative of such a function defines a finite Radon measure, the natural extension of the above concept turns out to be the space of finite Radon measures,
for which we shall find an analogous result to the last section. 
Let $d,m \in \N$ and let $\Omega \subset \R^d$ be a bounded and open set.
\begin{mydef}
 Let $\B$ be the Borel $\sigma$-algebra generated by the open sets in $\Omega$. 
 A mapping $\nu \colon \B \to \R^m$ is called an $\R^m$-valued, finite Radon measure on $\Omega$, if it is $\sigma$-additive and $\nu(\emptyset) = 0$. 
 For every set $E \in \B$ we define the total variation $|\nu| \colon \B \to [0,\infty)$ of $\nu$ as 
 \begin{align*}
  |\nu|(E) := \sup \left\{ \sum_{i=1}^{\infty} |\nu(E_i)| ~ :~ (E_i)_{i \in \N} \subset \B \text{ pairwise disjoint, } E = \bigcup_{i=1}^{\infty} E_i \right\}.
 \end{align*}
 Furthermore we denote the space of all finite $\R^m$-valued Radon measures on $\Omega$ by $\M$.
\end{mydef}
We shall need the characterization of $\M$ as a dual space, which can be found in \cite[Thm. 1.54]{Ambrosio:BoundedVariation}.
\begin{myprop}
 The space $\M$ equipped with $\| \nu \nM := |\nu|(\Omega)$ is a Banach space. 
 It can be considered as the dual of $\CO$ under the pairing 
 \begin{align*}
  \langle \nu, \phi \rangle = \int_{\Omega} \phi \cdot \di \nu := \sum_{i=1}^m \int_{\Omega} \phi_i \di \nu_i, 
 \end{align*}
 where $\nu = (\nu_1, \dots, \nu_m) \in \M$ and $\phi = (\phi_1, \dots, \phi_m) \in \CO$. 
 Furthermore, $\| \cdot \nM$ is the dual norm. 
\end{myprop}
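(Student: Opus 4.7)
The plan is to split the statement into the Banach space assertion and the isometric identification with $\CO^{*}$. For the norm axioms, homogeneity and the triangle inequality follow directly from the partition definition of $|\nu|(\Omega)$, and $\|\nu\nM=0$ forces $\nu\equiv 0$ on $\B$. Completeness: for a $\|\cdot\nM$-Cauchy sequence $(\nu_n)$, the bound $|\nu_n(E)-\nu_m(E)|\le\|\nu_n-\nu_m\nM$ makes $(\nu_n(E))$ Cauchy in $\R^m$ for every $E\in\B$, so I would define $\nu(E):=\lim_n\nu_n(E)$; a standard $3\varepsilon$-argument, exploiting that the Cauchy property is uniform in $E$, delivers $\sigma$-additivity, and then passing to the supremum over partitions gives $\|\nu_n-\nu\nM\to 0$.

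For the duality I would define the canonical map $\Phi\colon\M\to\CO^{*}$ by $\Phi(\nu)(\phi):=\langle\nu,\phi\rangle$. Well-definedness is immediate since each component $\nu_i$ is a scalar signed Radon measure with $|\nu_i|(\Omega)\le\|\nu\nM$, so the integrals $\int_\Omega\phi_i\di\nu_i$ are classical. The crucial tool for the norm computation is the polar decomposition of a vector measure: Radon--Nikodym applied componentwise to $\nu_i\ll|\nu|$ produces Borel densities $g_i\in L^1(|\nu|)$ which assemble into a Borel field $\sigma:=(g_1,\ldots,g_m)$ satisfying $|\sigma|=1$ $|\nu|$-a.e.\ and $\di\nu=\sigma\di|\nu|$ (the unit-length property coming from the identity $|\nu|(E)=\int_E|\sigma|\di|\nu|$, which in turn comes from the definition of total variation). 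This yields $\langle\nu,\phi\rangle=\int_\Omega\phi\cdot\sigma\di|\nu|$ and hence $|\Phi(\nu)(\phi)|\le\|\phi\|_\infty\|\nu\nM$. For the reverse inequality I combine Lusin's theorem with inner regularity of the positive Radon measure $|\nu|$ to produce $\phi_n\in\Cc$ with $\|\phi_n\|_\infty\le 1$ and $\phi_n\to\sigma$ in $L^1(|\nu|)$; then $\Phi(\nu)(\phi_n)\to\int_\Omega|\sigma|^2\di|\nu|=|\nu|(\Omega)$, so $\Phi$ is an isometry.

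Surjectivity reduces to the scalar Riesz representation theorem. Given $L\in\CO^{*}$, set $L_i(\phi):=L(\phi\,e_i)$ for $\phi\in C_0(\Omega)$ and $e_i$ the $i$-th standard basis vector; each $L_i$ is bounded and linear on $C_0(\Omega)$ and so arises from a finite signed Radon measure $\nu_i$ via $L_i(\phi)=\int_\Omega\phi\di\nu_i$. Linearity then recovers $L(\phi)=\langle\nu,\phi\rangle$ on $\CO$ for $\nu:=(\nu_1,\ldots,\nu_m)$, and it remains to verify $\nu\in\M$: for any Borel partition $(E_j)$, pick unit vectors $v_j\in\R^m$ realizing $|\nu(E_j)|=v_j\cdot\nu(E_j)$, approximate $\sum_j v_j\chi_{E_j}$ in $L^1(|\nu|)$ by $\phi\in\Cc$ with $\|\phi\|_\infty\le 1$, and pass to the limit to get $\sum_j|\nu(E_j)|\le\|L\|_{\mathrm{op}}$; taking the supremum over partitions yields $\|\nu\nM\le\|L\|_{\mathrm{op}}<\infty$. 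The main obstacle in this plan is neither the scalar Riesz theorem nor the completeness step, but the two approximation arguments in the last two paragraphs: both require approximating a bounded Borel vector field by continuous compactly-supported ones in $L^1(|\nu|)$ \emph{while preserving the sup-norm bound}, which is handled by Lusin's theorem, inner regularity of $|\nu|$, and a componentwise truncation/renormalization of the continuous approximant.
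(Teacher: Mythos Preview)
Your proof outline is correct and follows the standard textbook route (polar decomposition for the norm equality, Lusin-type approximation for sharpness, and componentwise reduction to the scalar Riesz representation theorem for surjectivity). The paper, however, does not supply its own proof of this proposition at all: it simply cites \cite[Thm.~1.54]{Ambrosio:BoundedVariation} and moves on. So there is nothing to compare against except to note that your argument is essentially the one found in that reference.
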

As a direct consequence of the Radon-Nikod\'{y}m theorem, the polar decomposition of a finite $\R^m$-valued Radon measure yields a decomposition into its direction and magnitude (cf. \cite{Ambrosio:BoundedVariation}). 
\begin{myprop}
 Let $\nu \in \M$. 
 Then there exists a function $f_\nu \in L_{|\nu|}^1(\Omega, \R^m)$ such that $|f_\nu| = 1$ $|\nu|$-a.e. and for every $E \in \B$ 
 \begin{align*}
  \nu(E) = \int_E f_\nu \di |\nu|. 
 \end{align*}
 $f_\nu$ is unique $|\nu|$-a.e. and we shall refer to it as the density or direction of $\nu$. 
\end{myprop}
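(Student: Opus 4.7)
The plan is to reduce the proposition to a componentwise application of the classical Radon--Nikod\'ym theorem and then to show the resulting density has unit Euclidean length $|\nu|$-almost everywhere. Writing $\nu = (\nu_1,\dots,\nu_m)$, each signed Radon measure $\nu_i$ satisfies $|\nu_i|(E)\le|\nu|(E)$ directly from the definition of total variation (picking the trivial partition $E_1=E$ in a partition that already exhibits $|\nu_i|$), so in particular $\nu_i\ll|\nu|$. Hence there exist scalar densities $f_i\in L^1_{|\nu|}(\Omega)$ with $\nu_i(E)=\int_E f_i\,\di|\nu|$, and I would simply set $f_\nu:=(f_1,\dots,f_m)$, which clearly lies in $L^1_{|\nu|}(\Omega,\R^m)$ and satisfies the integral representation stated in the proposition.

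The substantive step is the normalization $|f_\nu|=1$ $|\nu|$-a.e.; I would prove two inequalities. For $|f_\nu|\le 1$, given any Borel $E$ and any unit vector $\xi\in\R^m$, the signed measure $\xi\cdot\nu$ has density $\xi\cdot f_\nu$ with respect to $|\nu|$ and satisfies $|(\xi\cdot\nu)(E)|\le|\nu|(E)$. Taking $\xi=f_\nu(x_0)/|f_\nu(x_0)|$ at a Lebesgue point $x_0$ of $f_\nu$ (with respect to $|\nu|$) and letting $E$ shrink to $x_0$ through a Vitali-type family yields $|f_\nu(x_0)|\le 1$; alternatively, one can argue by contradiction with $A=\{|f_\nu|>1+\varepsilon\}$ and estimate $|\nu|(A)\ge|\nu(A)|/(1+\varepsilon)\cdot\dots$, which I would rather avoid in favor of the differentiation argument.

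For the lower bound $|f_\nu|\ge 1$, I would argue by contradiction: suppose $A:=\{|f_\nu|\le 1-\varepsilon\}$ has $|\nu|(A)>0$. For any finite partition $A=\bigsqcup_j A_j$ into Borel sets,
\begin{align*}
\sum_j |\nu(A_j)| = \sum_j \Bigl|\int_{A_j} f_\nu\,\di|\nu|\Bigr| \le \sum_j (1-\varepsilon)\,|\nu|(A_j) = (1-\varepsilon)\,|\nu|(A).
\end{align*}
Passing to countable partitions and taking the supremum gives $|\nu|(A)\le(1-\varepsilon)|\nu|(A)$, contradicting $|\nu|(A)>0$. Combining the two inequalities yields $|f_\nu|=1$ $|\nu|$-a.e. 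Uniqueness of $f_\nu$ as an $L^1_{|\nu|}$-equivalence class follows immediately from the uniqueness part of Radon--Nikod\'ym applied to each component.

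The main technical obstacle is the $|f_\nu|\le 1$ direction, since it requires either a Lebesgue differentiation theorem for finite Radon measures on $\Omega\subset\R^d$ or a direct partition argument that is a bit more delicate than the one used for the opposite inequality (one must handle the vectorial nature of $f_\nu$, not just its magnitude). Everything else is a routine bookkeeping from Radon--Nikod\'ym and the definition of $|\nu|$ via partitions.
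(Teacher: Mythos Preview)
Your argument is correct. The paper itself does not prove this proposition: it merely states it as ``a direct consequence of the Radon--Nikod\'ym theorem'' and refers to \cite{Ambrosio:BoundedVariation} for the details. What you have written is essentially the standard proof one finds in that reference (Corollary~1.29 there): componentwise Radon--Nikod\'ym to obtain $f_\nu$, followed by the two inequalities for $|f_\nu|$.

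One minor comment on the step you flag as the main obstacle. The differentiation argument for $|f_\nu|\le 1$ is valid (Besicovitch differentiation applies to any finite Radon measure on $\R^d$), but you can sidestep that machinery entirely. Approximate $f_\nu/|f_\nu|$ (set to $0$ where $f_\nu=0$) in $L^1_{|\nu|}$ by simple functions $g_n=\sum_k \xi_k^{(n)}\chi_{E_k^{(n)}}$ with $|\xi_k^{(n)}|\le 1$ and the $E_k^{(n)}$ disjoint. Then for any Borel $E$,
\[
\int_E g_n\cdot f_\nu\,\di|\nu| \;=\; \sum_k \xi_k^{(n)}\cdot\nu\bigl(E\cap E_k^{(n)}\bigr) \;\le\; \sum_k \bigl|\nu\bigl(E\cap E_k^{(n)}\bigr)\bigr| \;\le\; |\nu|(E),
\]
and dominated convergence gives $\int_E |f_\nu|\,\di|\nu|\le|\nu|(E)$, hence $|f_\nu|\le 1$ $|\nu|$-a.e. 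This avoids Lebesgue points altogether and is the argument actually used in the cited text.
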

We first compute the subdifferential $\partial \| \cdot \nM \subset \M^*$ of the total variation.
Since the dual space $\M^*$ possesses a rather difficult structure (cf. \cite{kaplan:second_dual}), 
a thorough study of the subdifferential goes beyond the scope and intention of this paper. 
We shall thus consider elements in $\mathcal{C}(\nu) := \partial \| \nu \nM \cap \CO$, 
i.e. subgradients that can be identified with a continuous function via the canonical embedding of the predual space $\CO$ into $\M^*$.
It is clear that $\mathcal{C}(\nu)$ can be empty for arbitrary measures $\nu \in \M$.
Hence, for the remainder of this paper we restrict ourselves to measures $\nu$ such that $\mathcal{C}(\nu) \neq \emptyset$, 
which immediately implies that the associated density function $f_\nu$ is continuous $|\nu|$-a.e. 
This can be seen directly from the next theorem.
\begin{myrem}\label{rem:subgrad_cont1}
 We remark that without the assumption $\mathcal{C}(\nu) \neq \emptyset$ very little can be said about the dual pairing between a measure and an element from its dual space, and hence about the structure of the associated Bregman distance.
 The assumption mainly serves illustrative purposes, since it allows to easily interpret the behavior of the method we derive in the course of this section.
 The final method however does {\it not} depend on the existence of a continuous subgradient (see also Remark \ref{rem:subgrad_cont}).
\end{myrem}
\begin{mythm}\label{thm:subdiff}
 Let $\nu \in \M$ with polar decomposition $\nu = f_\nu |\nu|$ such that $\mathcal{C}(\nu) := \partial \| \nu \nM \cap \CO \neq \emptyset$. 
 Then $q \in \mathcal{C}(\nu)$ if and only if $q \in \CO$ with $\| q \|_\infty \leq 1$ and  $q = f_\nu ~ |\nu|$-a.e.
\end{mythm}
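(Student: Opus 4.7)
The plan is to reduce the statement to the well-known characterization of the subdifferential of a norm and then exploit the polar decomposition pointwise. Recall that for any Banach space $X$ and $\nu \in X$, one has $q \in \partial \|\nu\|_X$ iff $\|q\|_{X^*} \leq 1$ together with $\langle q, \nu\rangle = \|\nu\|_X$ (and $\partial\|0\|_X$ is simply the closed unit ball of $X^*$). Applied to $X = \M$ with $X^* \supset \CO$, this reduces the entire problem to analyzing the dual pairing $\langle q, \nu\rangle = \int_\Omega q \cdot \di \nu$. The first ingredient I would state explicitly is that, since $\Mw$ is the dual of $\CO$, the canonical embedding $\CO \hookrightarrow \Mw^*$ is isometric, so that for $q \in \CO$ the abstract condition $\|q\|_{\Mw^*}\leq 1$ becomes the concrete condition $\|q\|_\infty \leq 1$.

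For the ``only if'' direction, take $q \in \mathcal{C}(\nu)$. Plugging the polar decomposition $\nu = f_\nu\, |\nu|$ into the extremality identity yields
\begin{align*}
\int_\Omega q(x) \cdot f_\nu(x)\,\di|\nu|(x) \;=\; \|\nu\nM \;=\; |\nu|(\Omega) \;=\; \int_\Omega 1\, \di|\nu|(x),
\end{align*}
so that
\begin{align*}
\int_\Omega \bigl(1 - q(x)\cdot f_\nu(x)\bigr)\,\di|\nu|(x) \;=\; 0.
\end{align*}
The pointwise Cauchy--Schwarz inequality, combined with $|q(x)| \leq \|q\|_\infty \leq 1$ and $|f_\nu(x)| = 1$ for $|\nu|$-a.e. $x$, gives $q(x)\cdot f_\nu(x) \leq |q(x)|\,|f_\nu(x)| \leq 1$. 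The integrand above is therefore nonnegative $|\nu|$-a.e.\ and must vanish $|\nu|$-a.e. Equality $q \cdot f_\nu = 1$ with $|f_\nu|=1$ forces simultaneously $|q|=1$ and $q$ parallel to $f_\nu$ with the same orientation, hence $q = f_\nu$ $|\nu|$-a.e., which is the claim.

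For the converse, let $q \in \CO$ satisfy $\|q\|_\infty \leq 1$ and $q = f_\nu$ $|\nu|$-a.e. Then the norm condition is immediate, and
\begin{align*}
\langle q, \nu\rangle \;=\; \int_\Omega q \cdot f_\nu \, \di|\nu| \;=\; \int_\Omega |f_\nu|^2 \, \di|\nu| \;=\; |\nu|(\Omega) \;=\; \|\nu\nM,
\end{align*}
so $q \in \partial \|\nu\nM$, and thus $q \in \mathcal{C}(\nu)$. The degenerate case $\nu = 0$ reduces to the claim $\mathcal{C}(0)$ equals the closed unit ball of $\CO$, which is contained in the general argument once one interprets the $|\nu|$-a.e.\ condition as vacuous.

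The main technical point I expect to spell out carefully is the isometric identification of $\CO$ with its image in $\Mw^*$ (so that ``$\|q\|_{\Mw^*}\leq 1$'' really is ``$\|q\|_\infty \leq 1$''); beyond that, the argument is essentially an equality case of Cauchy--Schwarz, propagated from the integral identity to a pointwise identity by the nonnegativity of the integrand. No further structural result about the (rather nasty) full bidual $\Mw^*$ is required, which is precisely why the authors restrict from $\partial\|\nu\nM$ to its intersection with $\CO$.
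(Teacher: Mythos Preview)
Your proof is correct and the equality-case-of-Cauchy--Schwarz argument (going from $\int_\Omega(1-q\cdot f_\nu)\,\di|\nu|=0$ to $q=f_\nu$ $|\nu|$-a.e.) is the same as the paper's. The main difference lies in how you obtain $\|q\|_\infty\le 1$: you invoke the general characterization $\partial\|\nu\|_X=\{q:\|q\|_{X^*}\le 1,\ \langle q,\nu\rangle=\|\nu\|_X\}$ together with the isometry of the canonical embedding $\CO\hookrightarrow\Mw^*$, whereas the paper works directly from the subdifferential inequality and tests it against perturbations $\nu+\varepsilon\sigma\delta_x$ by Dirac measures to conclude $|q(x)|\le 1$ for every $x$. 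Your route is shorter and more conceptual (once the two black-box facts are granted), while the paper's is entirely self-contained and avoids any discussion of the bidual $\Mw^*$; in particular the Dirac-measure argument bypasses the need to identify $\|q\|_{\Mw^*}$ with $\|q\|_\infty$. For the converse direction the paper verifies the subdifferential inequality $\langle q,\eta-\nu\rangle\le\|\eta\nM-\|\nu\nM$ directly rather than appealing to the abstract characterization, but this amounts to the same computation you give.
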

\begin{proof}
 First, let $q \in \mathcal{C}(\nu)$. 
 Then by the definition of the subdifferential we have 
 \begin{align}
  \langle q, \eta - \nu \rangle \leq \| \eta \nM - \| \nu \nM 
  \label{eq:subdiff_Mnorm}
 \end{align}
 for all $\eta \in \M$. 
 In particular, for arbitrary $x \in \Omega$ let the Dirac measure $\delta_x \colon \B \to \R$ be defined by
 \begin{align*}
  \delta_x (E) = \begin{cases}
                  1, & x \in E \\
                  0, & \text{else}
                 \end{cases}
 \end{align*}
 for all $E \in \B$.
 Then for any $ \varepsilon >0 $ and fixed $\sigma \in S^{m-1} = \{ \sigma \in R^m ~|~ |\sigma| = 1\}$, $\varepsilon \sigma \delta_x \in \M$ defines a finite Radon measure.
 Hence for $\nu_\varepsilon := \nu + \varepsilon \sigma \delta_x$ we find $\| \nu_\varepsilon \nM \leq \| \nu \nM + \varepsilon$ and 
  \begin{align*}
\langle q, \nu_\varepsilon - \nu \rangle \leq \|\nu_\varepsilon \nM - \| \nu \nM 
 \quad \Rightarrow \quad \langle q, \sigma \delta_x \rangle \leq 1.
 \end{align*}
 Since $\sigma$ was arbitrary we deduce 
 \begin{align*}
  \sup_{\sigma \in S^{m-1}} | \langle q, \sigma \delta_x \rangle | 
  = \sup_{\sigma \in S^{m-1}} \left| \int_\Omega q \cdot \sigma \di \delta_x \right| 
  = \sup_{\sigma \in S^{m-1}} | (q \cdot \sigma)(x) |  \leq 1
 \end{align*}
 which yields $|q(x)| \leq 1$ for the particular choice $\sigma = \frac{q}{|q|}(x)$.
 Since $x$ was arbitrary we deduce that $\|q\|_\infty \leq 1$.
 Furthermore using $\eta = 0$ and $\eta = 2\nu$ in \eqref{eq:subdiff_Mnorm} we obtain 
 \begin{align*}
  \quad \langle q,\nu \rangle  = \| \nu \nM 
  \quad \Leftrightarrow \quad \int_\Omega q \cdot \di \nu = \int_\Omega 1 \di |\nu| 
  \quad \Leftrightarrow \quad \int_\Omega q \cdot f_\nu \di |\nu| = \int_\Omega 1 \di |\nu|, 
 \end{align*}
 hence $ 1 = q \cdot f_\nu$ $|\nu|$-a.e. (note that $1 - q \cdot f_\nu \geq 0$ $|\nu|$-a.e.). 
 Since $|f_\nu| = 1$, this implies $q = f_\nu$ $|\nu|$-a.e. and in particular that $f_\nu$ is continuous $|\nu|$-a.e.
 
 On the other hand, for any $q \in \CO$ with $\|q\|_\infty \leq 1$ and $q = f_\nu$ $|\nu|$-a.e. we calculate for any $\eta \in \M$: 
 \begin{alignat*}{3}
  \langle q, \eta - \nu \rangle 
  &= \int_\Omega q \cdot f_\eta \di |\eta|  - \int_\Omega q \cdot f_\nu \di |\nu| 
  &&\leq \int_\Omega |q| |f_\eta| \di |\eta| - \int_\Omega f_\nu \cdot f_\nu \di |\nu| \\
  & \leq \|q\|_\infty \int_\Omega 1 \di |\eta| - \int_\Omega 1 \di |\nu| 
  &&\leq \|\eta \nM - \| \nu \nM,
 \end{alignat*}
 which yields the assertion.
\end{proof}
Note that the structure of the $\| \cdot \nM$-subdifferential is similar to the structure of the $\ell^1$-subdifferential from the last subsection. 
A subgradient $q \in \mathcal{C}(\nu)$ equals the direction $f_\nu$ of the measure $\nu$ on its support, while it is an arbitrary element from the unit ball on the zero sets of $|\nu|$.

Let now $\eta, \nu \in \M$ be two $\R^m$-valued Radon measures with polar decomposition $\eta = f_\eta |\eta|, \nu = f_\nu |\nu|$ such that $\mathcal{C}(\nu) \neq \emptyset$. 
We can immediately write down the Bregman distance between $\eta$ and $\nu$ with respect to $q \in \mathcal{C}(\nu)$ to obtain an analogous result to the discrete case. 
\begin{align*}
 D_{\| \cdot \nM}^q (\eta, \nu) = \| \eta \nM - \langle q, \eta \rangle = \int_\Omega 1 \di |\eta| - \int_\Omega q \cdot f_\eta \di |\eta| = \int_\Omega 1 - q \cdot f_\eta \di |\eta|.
\end{align*}
We again denote the angle between $q$ and $f_\eta$ by $\varphi$,
\begin{align}
 D_{\| \cdot \nM}^q (\eta, \nu) = \int_\Omega 1 - \cos(\varphi)|q| \di |\eta|,
 \label{eq:BDM}
\end{align}
and can again distinguish two situations to minimize \eqref{eq:BDM}, now in an $|\eta|$-a.e. sense.
In case $q = f_\nu$, i.e. on the support of $\nu$, it is beneficial to either align $f_\eta$ to $f_\nu$ or to assign no mass to $\eta$.
If $|q| < 1$, i.e. outside the support of $\nu$, it does not provide any explicit side information. 
Instead, depending on the magnitude of $q$ it is either possible to assign no mass to $\eta$, or to align its direction $f_\eta$ to $q$. 

We shall finally transfer the concept to the (distributional) gradients of $\BV$-functions in order to compare the structures of images. 
\begin{mydef}
 A function $u \in L^1(\Omega)$ is a function of bounded variation in $\Omega$ if and only if its distributional derivative is representable by a finite Radon measure $\DM u = (\DM_1 u, \dots, \DM_m u)$ in $\Omega$, i.e. 
 \begin{align*}
  \int_{\Omega} u \, \divergence(\phi) \di x = - \sum_{i=1}^m \int_\Omega \phi_i \di \DM_i u \quad \text{ for all } \quad \phi \in \Cinf.
 \end{align*}
 We denote the vector space of all functions of bounded variation by $\BV$.
 An equivalent formulation (see \cite[Prop. 3.6.]{Ambrosio:BoundedVariation}) is that $u \in \BV$ if and only if 
 \begin{align*}
  \TV(u) := \sup_{\substack{\phi \in \Cinf, \\ \|\phi\|_{\infty} \leq 1}} \int_\Omega u \, \divergence(\phi) \di x < \infty.
 \end{align*}
 In particular, $\TV(u) = \| \DM u \nM$.
\end{mydef}
%
Hence let $\DM \colon \BV \to \M$ denote the distributional derivative.
We obtain the following result. 
\begin{myprop}
Let $u,v \in \BV$ with derivatives $\DM u, \DM v \in \M$, and $p \in \partial \TV(u)$. 
Then
\begin{align*}
 D_{\TV}^p(v,u) = D_{\| \cdot \nM}^q (\DM v, \DM u) 
\end{align*}
for some $q \in \partial \| \DM u \nM$.
\label{prop:BregmanDistance}
\end{myprop}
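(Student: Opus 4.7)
The plan is to compare the two Bregman distances term by term. Expanding,
\[
D_{\TV}^p(v,u) = \TV(v) - \TV(u) - \langle p, v-u \rangle,
\]
\[
D_{\| \cdot \nM}^q(\DM v, \DM u) = \|\DM v\nM - \|\DM u\nM - \langle q, \DM v - \DM u \rangle.
\]
Since $\TV(w) = \|\DM w\nM$ for every $w \in \BV$ by definition, the magnitude pieces already coincide. The entire task therefore reduces to producing some $q \in \partial \|\DM u\nM$ such that the linear pieces agree, i.e. $\langle p, v-u\rangle = \langle q, \DM v - \DM u\rangle$.

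The construction of $q$ proceeds through the classical primal-dual characterization of $\partial \TV(u)$. Because $\TV(u)$ is the support functional of the closure of $\{\divergence \phi : \phi \in \Cinf,\, \|\phi\|_\infty \leq 1\}$, a subgradient $p \in \partial \TV(u)$ admits a representative $p = -\divergence \phi$ with $\phi \in L^\infty(\Omega, \R^m)$, $\|\phi\|_\infty \leq 1$, and the complementarity identity $\int_\Omega \phi \cdot \di \DM u = \TV(u) = \|\DM u\nM$. Define $q := \phi$, read as a bounded Borel density paired against $\M$ through $\int_\Omega q \cdot \di\eta$. The verification that $q \in \partial \|\DM u\nM$ is now the same direct estimate used in the sufficiency half of Theorem \ref{thm:subdiff}: for any $\eta \in \M$,
\[
\langle q, \eta - \DM u \rangle = \int_\Omega q \cdot \di\eta - \|\DM u\nM \leq \|q\|_\infty \|\eta\nM - \|\DM u\nM \leq \|\eta\nM - \|\DM u\nM,
\]
so $q$ satisfies the subdifferential inequality even without the continuity assumption that was adopted in Theorem \ref{thm:subdiff} for illustrative purposes (cf.\ Remark \ref{rem:subgrad_cont1}).

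It remains to identify the linear terms, which is an integration-by-parts computation. The generalized Gauss-Green formula for functions of bounded variation (an Anzellotti-type pairing) gives, for every $w \in \BV$ and the bounded field $\phi$ above,
\[
\langle -\divergence \phi, w \rangle = \int_\Omega \phi \cdot \di \DM w.
\]
Applying this with $w = v - u \in \BV$ yields $\langle p, v-u\rangle = \int_\Omega q \cdot \di \DM(v-u) = \langle q, \DM v - \DM u \rangle$, which is exactly the missing identity. Combined with the match of the magnitude pieces, this establishes the claim. The main technical point is the careful handling of the dual pairings: the natural predual of $\M$ employed in the paper is $\CO$, but the vector field extracted from the $\TV$-subdifferential is only $L^\infty$. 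This is exactly the gap that Anzellotti's trace framework is designed to close, and once the pairing is legitimized, the proof is a line-by-line bookkeeping.
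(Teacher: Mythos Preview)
Your argument is correct, and at its core it is the same identity the paper uses: $p = \DM^* q$ for some $q \in \partial\|\DM u\nM$, which immediately turns $\langle p,\,\cdot\,\rangle$ into $\langle q,\DM\,\cdot\,\rangle$ and matches the linear parts of the two Bregman distances. The paper, however, obtains this in one stroke by invoking the convex chain rule for $\TV = \|\cdot\nM \circ \DM$ (citing Ekeland--Temam): since $\DM$ is linear continuous and $\|\cdot\nM$ is convex lsc, $\partial\TV(u) = \DM^*\,\partial\|\DM u\nM$, whence $\langle p,v\rangle = \langle \DM^* q,v\rangle = \langle q,\DM v\rangle$; it also exploits one-homogeneity to drop the $u$-terms from the Bregman distance. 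You instead unpack the chain rule by hand: you produce $q$ concretely as the $L^\infty$ vector field $\phi$ with $p=-\divergence\phi$, verify the subgradient inequality for $q$ directly, and then appeal to Anzellotti's pairing to legitimize $\langle -\divergence\phi,w\rangle = \int_\Omega \phi\cdot\di\DM w$ for $w\in\BV$. What your route buys is a tangible $q\in L^\infty$ rather than an abstract element of $\M^*$; what the paper's route buys is brevity and freedom from the extra machinery (the $L^\infty$-characterization of $\partial\TV$ and the Anzellotti trace theory), since the adjoint relation $\langle \DM^* q,w\rangle=\langle q,\DM w\rangle$ holds by definition once one works at the level of $\M^*$ and $\BV^*$.
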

\begin{proof}
Since $\DM$ is continuous between $\BV$ and $\M$ and the underlying functionals are convex and lower semicontinuous, the chain rule \cite[p.27]{Ekeland:ConvexAnalysis} yields 
$p \in \partial \TV (u)$ if and only if $p = \DM^* q$ for some $q \in \partial \| \DM u \nM$.
Thus 
\begin{align*}
 D_{\TV}^p(v,u) = \TV(v) - \langle p,v \rangle = \| \DM v \nM - \langle \DM^* q , v \rangle = \| \DM v \nM - \langle q , \DM v \rangle =  D_{\| \cdot \nM}^q (\DM v, \DM u).
\end{align*}
\end{proof}
Let now $u,v \in \BV$ such that $\mathcal{C}(\D u) \neq \emptyset$.
From 
\begin{align}
 D_{\| \cdot \nM}^q (\DM v, \DM u) = \int_\Omega 1 - q \cdot f_{\DM v} \di |\DM v|, \quad q \in \partial \| \DM u \nM
 \label{eq:BDTV}
\end{align}
and our above observations we see, that the Bregman distance with respect to the TV functional penalizes the total variation of $v$ weighted by the deviation of image gradients (respectively their direction). 
Minimizing \eqref{eq:BDTV} hence favors a piecewise constant (denoised) image $v$ with a similar edge set (and hence structure) as $u$.  
Note that we again obtain no penalty for aligned gradient directions, and that it is in particular possible {\it not} to introduce an edge to $v$ even though $\DM u$ indicates it. 
This feature reduces the risk of artificially introducing edges present in one image while the other is flat.

We give a brief example to further illustrate the idea. 
The distributional derivative of $u$ from the Sobolev space $W^{1,1}(\Omega) \subset \BV$ is given by $\DM u = \nabla u \Leb$ with the Lebesgue measure $\Leb$. 
The associated polar decomposition is given by $f_{\DM u} |\nabla u| \Leb$ with $f_{\DM u} = \nabla u / |\nabla u|$ on the support of $\nabla u$.
If $q \in \mathcal{C}(\D u)$ then $q = \nabla u / |\nabla u|$, $|\nabla u| \Leb$-a.e., and the Bregman distance between $u$ and $v \in W^{1,1}(\Omega)$ reads 
\begin{align*}
 D_{\TV}^p(v,u) = \int_\Omega 1 - q \cdot \frac{\nabla v}{| \nabla v |} \di |\nabla v| \Leb.
\end{align*}
Minimizing the Bregman distance hence corresponds to weighted total variation, and favors an alignment of $\nabla v$ to $\nabla u$, $|\nabla u| \Leb$-a.e. 

Since the Bregman distance is not symmetric, but the subgradients serve as one-sided a-priori information, we propose to solve the following iterative procedure for structural joint reconstruction: 
\begin{align}
u_i^{k+1} \in \argmin_{u_i \in \BV} \Bigg\{ \alpha_i H_{f_i}(K_i u_i) + \sum_{j=1}^N w_{ij} D_{\TV}^{p_j^k}(u_i, u_j^k) \Bigg\}. 
\label{eq:ColorBregmanIteration}
\end{align}
In each step of the procedure we solve the latter problem for one $u_i$, using the subgradients of the other $u_j$ as a-priori information, thus fitting the level sets of $u_i$ to the level sets of all $u_j$. 
Note that we have included the last iterate of $u_i$ as well, which guarantees the well-definedness of the problem and the usual properties of single channel Bregman iterations \cite{Burger:BregmanDistancesInInverseProblems, Osher:AnIterativeRegularizationMethod}. 
The matrix $W = (w_{ij}) \in \R^{N\times N}$ serves as a weighting between the different channels and is chosen such that its rows are normalized, i.e. 
\begin{align}
\sum_{j=1}^N w_{ij} = 1, \quad \forall \; i = 1, \dots, N.
\label{eq:RowSum}
\end{align}
Note that in case of $W$ being the identity we obtain decoupled standard Bregman iterations. 
The iteration scheme \eqref{eq:ColorBregmanIteration} is of course not limited to the total variation but can be carried out for arbitrary convex and absolutely one-homogeneous regularizers $J$. 
It has been proposed before by Moeller et al. \cite{Moeller:ColorBregmanTV} for color image processing and 
\begin{align*}
H_{f_i}(K_i u_i) = \dfrac{1}{2} \|K_i u_i - f_i\|^2.
\end{align*} 
Starting with $u^0 = 0$ and $p^0 = 0$, the optimality condition of problem \eqref{eq:ColorBregmanIteration} allows us to compute new subgradients after having updated every $u_i$ once: 
\begin{align}
p_i^{k+1} = \sum_{j=1}^N w_{ij} p_j^k - \dfrac{1}{\alpha_i} K_i^* \nabla H_{f_i} (K_i u_i^{k+1}), \quad p_i^{k+1} \in \partial \TV(u_i^{k+1}).
\label{eq:ColorBregmanSubgradients}
\end{align} 

\begin{myrem}\label{rem:subgrad_cont}
 We mention that the set $\mathcal{C} (\DM u_i^{k+1})$ can be empty for the solution $u_i^{k+1}$ of method \eqref{eq:ColorBregmanIteration}, i.e. that there does not exist a {\it continuous} $q_i^{k+1} \in \partial \| \DM u_i^{k+1} \nM$ such that $\DM^* q_i^{k+1} = p_i^{k+1}$.
 It is important to notice that the above theory and Proposition \ref{prop:BregmanDistance} still hold in this situation, with the exception of the {\it illustration} \eqref{eq:BDTV}.
 The derived method \eqref{eq:ColorBregmanIteration} solely depends on the $p_i^{k+1}$ from \eqref{eq:ColorBregmanSubgradients} and is hence well-defined also for $\mathcal{C} (\DM u_i^{k+1}) = \emptyset$.
 See also Remark \ref{rem:subgrad_cont1}.
\end{myrem}

With the subgradient updates \eqref{eq:ColorBregmanSubgradients} we shall briefly prove the well-definedness of the iteration scheme \eqref{eq:ColorBregmanIteration} by the direct method of the calculus of variations \cite[Prop. 1.2]{Ekeland:ConvexAnalysis}. 
Therefore, let $\W$ be the predual space of $\BV$, and assume that $K_i$ is the adjoint of a linear and bounded functional $L_i \colon Y_i^* \to \W$ on a Hilbert space $Y_i$ for every $i$.

%
\begin{mythm}
 Let the mapping $u_i \mapsto \mathcal{E}_i(u_i) = \alpha_i H_{f_i}(K_i u_i) + \TV(u_i)$ be coercive and weak-$^*$ lower semicontinuous and $p_j^k$ be given by \eqref{eq:ColorBregmanSubgradients} for $k \geq 0$.
 Assume furthermore that the convex conjugate $H_{f_i}^*$ of $H_{f_i}$ is uniformly bounded and $w_{ii} > 0$. 
 Then there exists at least one solution to \eqref{eq:ColorBregmanIteration}.
 \label{thm:existence}
\end{mythm}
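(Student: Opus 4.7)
The plan is to apply the direct method of the calculus of variations to the convex functional
\[
F(u_i) := \alpha_i H_{f_i}(K_i u_i) + \sum_{j=1}^N w_{ij}\, D_{\TV}^{p_j^k}(u_i, u_j^k).
\]
First I would rewrite $F$ in a more tractable form. Since $p_j^k \in \partial \TV(u_j^k)$ and $\TV$ is absolutely one-homogeneous, $\langle p_j^k, u_j^k\rangle = \TV(u_j^k)$, so the constants inside the Bregman distances cancel; combined with the row-normalization $\sum_j w_{ij}=1$ this gives
\[
F(u_i) = \mathcal{E}_i(u_i) - \langle \tilde p^k, u_i\rangle, \qquad \tilde p^k := \sum_{j=1}^N w_{ij}\, p_j^k.
\]
Non-negativity of each Bregman distance (and of $H_{f_i}$) then makes $F$ proper and bounded below.

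Second, I would verify that the linear term is weak-$^*$ continuous on $\BV \cong \W^*$. By induction on $k$ using the subgradient update \eqref{eq:ColorBregmanSubgradients} and starting from $p_j^0 = 0$, each $p_j^k$ is a convex combination of previously generated subgradients plus an increment $\tfrac{1}{\alpha_j}K_j^* \nabla H_{f_j}(K_j u_j^{k})$; since $K_j$ is the adjoint of $L_j\colon Y_j^*\to\W$, the range of $K_j^*$ lies in $\W$, hence $\tilde p^k \in \W$. Thus $u_i \mapsto \langle \tilde p^k, u_i\rangle$ is weak-$^*$ continuous, and $F$ is weak-$^*$ lower semicontinuous as the sum of a weak-$^*$ lsc map (by assumption on $\mathcal{E}_i$) and a weak-$^*$ continuous linear form.

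Third, and this will be the main obstacle, I would establish coercivity of $F$. For a minimizing sequence $(u_n)$ one has $\alpha_i H_{f_i}(K_i u_n)\leq c$ and $w_{ii} D_{\TV}^{p_i^k}(u_n, u_i^k)\leq c$, where $w_{ii}>0$ is essential for the latter. The subtle point is that $\langle \tilde p^k, u_n\rangle \leq \TV(u_n)$ (by the subgradient inequality for absolutely one-homogeneous functionals) can in principle cancel the entire $\TV$-part of $\mathcal{E}_i$, so coercivity of $\mathcal{E}_i$ alone is not enough. Here I would invoke the uniform boundedness of $H_{f_i}^*$: Fenchel–Young gives $H_{f_i}(K_i u_n) \geq \langle K_i u_n, y\rangle - \|H_{f_i}^*\|_\infty$ for any admissible $y$, producing growth of $H_{f_i}$ strong enough to absorb the linear perturbation $\langle \tilde p^k, u_n\rangle$ up to a constant. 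Combined with the assumed coercivity of $\mathcal{E}_i$, this yields a uniform $\BV$-bound on $(u_n)$.

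Finally, weak-$^*$ compactness of closed balls in $\BV = \W^*$ (Banach–Alaoglu) produces a subsequence $u_{n'} \stackrel{*}{\rightharpoonup} u^*$, and the weak-$^*$ lower semicontinuity from step two yields $F(u^*) \leq \liminf_{n'} F(u_{n'}) = \inf F$, so $u^*$ is a minimizer. Apart from the coercivity step everything is routine direct-method machinery; the real work lies in extracting a $\BV$-bound from the interplay between $w_{ii}>0$ and the uniform boundedness of $H_{f_i}^*$.
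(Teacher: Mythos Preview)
Your overall architecture (direct method, weak-$^*$ lsc via $\tilde p^k\in\W$, Banach--Alaoglu) matches the paper, and you correctly isolate coercivity as the real issue. However, your proposed coercivity argument has a genuine gap.

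Fenchel--Young yields $H_{f_i}(K_i u_n)\geq \langle K_i u_n,y\rangle - H_{f_i}^*(y)=\langle u_n,K_i^* y\rangle - H_{f_i}^*(y)$, so the only linear functionals on $u_n$ you can absorb this way lie in the range of $K_i^*$. Your perturbation $\tilde p^k=\sum_j w_{ij}p_j^k$ does \emph{not}: by the update rule it is built from increments $K_j^*\nabla H_{f_j}(K_j u_j^{m})$ across \emph{all} channels $j$, and there is no reason for these to lie in $\mathrm{range}(K_i^*)$. Hence you cannot choose $y$ with $K_i^* y=c\,\tilde p^k$, and the data term alone cannot neutralize the linear part. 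Since $\langle\tilde p^k,u_n\rangle$ may cancel the entire $\TV(u_n)$, coercivity of $\mathcal{E}_i$ gives you nothing here.

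The paper's remedy is a structural decomposition that you are missing. One discards the off-diagonal Bregman distances (non-negative) and unrolls the recursion for the \emph{own} subgradient $p_i^{(k)}$ only: by induction,
\[
p_i^{(k)}=\sum_{l=0}^{k-1}\sum_{j\neq i} w_{ij}\,w_{ii}^{\,l-1}\,p_j^{(k-l-1)}\;-\;L_i\zeta
\]
for some $\zeta\in Y_i^*$. The crucial point is that the coefficients on the foreign subgradients sum to $1-w_{ii}^k<1$ (this is exactly where $w_{ii}>0$ enters). Pairing with $u_i$ and using $\TV(u_i)-\langle p_j^{(m)},u_i\rangle\geq 0$ therefore leaves a surplus $w_{ii}^{k}\,\TV(u_i)$, while the remaining linear piece $\langle L_i\zeta,u_i\rangle=\langle\zeta,K_i u_i\rangle$ \emph{is} of the form absorbable by Fenchel--Young, giving $\mathcal{E}(u_i)\geq c_1\mathcal{E}_i(u_i)-c_2$. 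In short: split the troublesome linear term into a strictly sub-convex combination of subgradients (handled by $\TV$) plus a range-$K_i^*$ part (handled by $H_{f_i}^*$); your argument attempts to let $H_{f_i}$ do all the work, which it cannot.
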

\begin{proof}
For simplicity we let $\alpha_i = 1$, and put iteration numbers in brackets in order to distinguish between powers and iteration numbers in this proof.
We inductively show that  
\begin{align*}
p_i^{(k)} = \sum_{l=0}^{k-1} \sum_{j \neq i} w_{ij} w_{ii}^{l-1} p_j^{(k-l-1)} - L_i \zeta
\end{align*}
for some $\zeta \in Y_i^*$. 
The crucial part is that due to \eqref{eq:RowSum}
\begin{align*}
\sum_{l=0}^{k-1} \sum_{j \neq i} w_{ij} w_{ii}^{l-1} = \sum_{j \neq i} w_{ij} \dfrac{1-w_{ii}^k}{1-w_{ii}} = 1-w_{ii}^k < 1 \quad \forall k.
\end{align*}
By the positivity of Bregman distances we have for $0 < \delta < 1$
\begin{alignat*}{2}
\mathcal{E} (u_i)  :=
 & H_{f_i}(K_i u_i) +  \sum_{j=1}^N w_{ij} D_\TV^{p_j^{(k)}}(u_i, u_j^{(k)}) \\
 \geq  &~H_{f_i}(K_i u_i) + \delta w_{ii} \big( \TV(u_i) - \langle p_i^{(k)} , u_i \rangle \big) \\
= &~H_{f_i}(K_i u_i) + \delta w_{ii} \big( \TV(u_i) - \langle \sum_{l=0}^{k-1} \sum_{j \neq i} w_{ij} w_{ii}^{l-1} p_j^{(k-l-1)}, u_i \rangle + \langle L_i \zeta, u_i \rangle \big) \\
= &~H_{f_i}(K_i u_i) + \delta w_{ii}^{k+1} \TV(u_i) + \delta w_{ii} \langle L_i \zeta, u_i \rangle +  \delta w_{ii} \big( \sum_{l=0}^{k-1} \sum_{j \neq i}  w_{ij} w_{ii}^{l-1} \big( \underbrace{\TV(u_i) - \langle p_j^{(k-l-1)}, u_i \rangle}_{\geq 0} \big) \big) \\
\geq &~(1-\delta) H_{f_i}(K_i u_i) + \delta w_{ii}^{k+1} \TV(u_i) + \delta \big( H_{f_i}(K_i u_i) - \langle  - w_{ii} \zeta, K_i u_i \rangle \big)\\
\geq &~c_1 ( H_{f_i}(K_i u_i) + \TV(u_i) ) - \delta H_{f_i}^*(-w_{ii} \zeta) \\
\geq &~c_1 \mathcal{E}_i(u_i) - c_2, 
\end{alignat*}
where $c_1 = \min \{ 1- \delta, \delta w_{ii}^{k+1} \}$. 
Hence the mapping $u_i \mapsto \mathcal{E}_i(u_i)$ is bounded on every sublevel set of the objective functional, and the Banach-Alaoglu theorem 
implies their precompactness in the weak-$^*$ topology. 
Since $p_j^k \in \W$ for all $j$, the mapping $u_i \mapsto \mathcal{E}(u_i)$ is weak-$^*$ sequentially lower semicontinuous on bounded sets, which yields the assertion by a standard argument (see e.g. \cite[Prop. 1.2]{Ekeland:ConvexAnalysis}).
\end{proof}
\begin{myrem}
 We like to further comment on the assumption $K_i = L_i^*$. 
 Following the line of argumentation in \cite{Bredies2013}, this assumption ensures that $K_i^*$ indeed maps $Y_i$ into $\W$ and not into the bigger space $\BV$. 
 More precisely, this assumption implies that $K_i$ is sequentially continuous from the weak-$^*$ topology of $\BV$ to the weak(-$^*$) topology of $Y_i$ (note that they coincide on a Hilbert space) and hence possesses an adjoint $K_i^*$ mapping $Y_i$ to $\W$, regarded as a closed subspace of $\BV$.
 As a consequence, $p_i^{k+1} \in \W$, which is necessary to ensure lower semicontinuity in the weak-$^*$ topology of  the linear term appearing in the Bregman distance.
\end{myrem}

\begin{myrem}\label{rem:convergenceCBTV}
 It is natural to ask the question about the convergence of the entire iterative procedure \eqref{eq:ColorBregmanIteration}. 
 In \cite[Sec. 4.4 and 5]{Moeller:ColorBregmanTV} the authors discuss potential stationary solutions of the procedure and its convergence in some special cases. 
 In case of identical operators $K_i = K$, symmetric weights $w_{ij} = w_{ji}$ and squared Hilbert space norms as data fidelities one can show that \eqref{eq:ColorBregmanIteration} converges for noiseless data $f_i$. 
 With noisy data it is shown that the Bregman distance to the clean images decreases until the residual reaches the noise level, which is a generalization of the results for single channel Bregman iterations \cite{Osher:AnIterativeRegularizationMethod}.
 Unfortunately, little can be said about the general case, which at this point is subject of future research. 
 The numerical studies however are encouraging, as they basically show the same behavior of \eqref{eq:ColorBregmanIteration} as single channel Bregman iterations even for very different imaging operators $K_i$ and asymmetric weightings. 
\end{myrem}

\subsection{Allowing anti-parallel vectors: Infimal convolution of Bregman distances}
Taking a second glance at \eqref{eq:BDM} or \eqref{eq:BDTV}, we notice that the derived measure highly penalizes opposing vectors ($\cos(\varphi) = -1$) and forces them to point to the same direction, which requires that jumps across the edges of several images share the same sign. 
Or simply speaking, taking a ``step up'' across the edge between two structures in one image requires a ``step up'' in the other images, too.
This can be suitable for some applications, e.g. for color image denoising and processing \cite{Moeller:ColorBregmanTV}, since edges in different color channels of natural images most commonly seem to point to the same direction. 
However, this assumption is not always valid and in particular questionable in PET-MRI, so we adjust our method to allow opposing jumps across the edges. 
In view of \eqref{eq:BDM} the most intuitive way is to apply the absolute value to the dot-product, i.e. 
\begin{align}
\int_\Omega 1 - |q \cdot f_\eta| \di |\eta| = \int_\Omega 1 - |\cos(\varphi)||q| \di |\eta|
\label{eq:BDMabs}
\end{align}
Note that this resembles Ehrhardt et al.'s idea of parallel level sets \eqref{eq:ParallelLevelsets} but bears the problem of non-convexity, and is hence not related to a Bregman setting. 
Instead we follow \cite{Moeller:ColorBregmanTV} and use the infimal convolution of two Bregman distances. 
The infimal convolution of two proper and convex functionals $J,L$ is defined as \cite[p. 167]{Bauschke:ConvexAnalysis}
\begin{align*}
(J \Box L)(u) = \inf_{\phi + \psi = u} J(\phi) + L(\psi).
\end{align*}
It is easy to show that the infimal convolution preserves convexity. 
It allows for a local decomposition of the argument $u$ into two parts, each trying to minimize one of the underlying functionals, while balancing the minimization of both. 
With the aim of losing the sign condition, it makes sense not only to look at the subgradient $q$, but also at the subgradient $-q$ with negative or ``inverted'' contrast. 
We recall that for absolutely one-homogeneous functionals $J$ we have 
\begin{align*}
q \in \partial J(u) \Leftrightarrow -q \in \partial J(-u),
\end{align*}
so it is natural to look at the following infimal convolution of two $\| \cdot \nM$-Bregman distances. 
\begin{mythm}\label{thm:inf_conv}
Let $\nu,\eta \in \M$ such that $\mathcal{C}(\nu) \neq \emptyset$.  
Let $q \in \mathcal{C}(\nu)$. Then
\begin{align*}
\ICBM^q(\eta,\nu) := \left[D_{\| \cdot \nM}^q(\cdot,\nu) \Box D_{\| \cdot \nM}^{-q}(\cdot,-\nu)\right](\eta) 
= \int_\Omega G(f_\eta,q) \di |\eta| , 
\end{align*}
where 
\begin{align}
G(f_\eta,q) = \begin{cases}
	       1 - |\cos(\varphi)| |q|, & \text{ if } |q| < |\cos(\varphi)|,\\
	       | \sin(\varphi)| \sqrt{1 - |q|^2}, &\text{ if } |q| \geq |\cos(\varphi)|
	       \end{cases}
	       \label{eq:IC_integrand}
\end{align}
$|\eta|$-a.e.,
 and $\varphi$ denotes the angle between $f_\eta$ and $q$, i.e. 
 $\cos(\varphi) |f_\eta| |q| = f_\eta \cdot q$.
\end{mythm}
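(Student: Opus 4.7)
The plan is to reduce the infimal convolution over measures to a pointwise finite-dimensional optimization via the polar decomposition, and then solve the pointwise problem by Fenchel duality. Three steps carry this out.

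First, I would decompose any admissible pair $(\phi,\psi)$ with $\phi + \psi = \eta$ with respect to $|\eta|$. Writing $\phi = a\,|\eta| + \phi_s$ and $\psi = b\,|\eta| + \psi_s$ with singular parts $\phi_s,\psi_s \perp |\eta|$, the identity $\eta = f_\eta\,|\eta|$ (with no singular part) forces $a + b = f_\eta$ $|\eta|$-a.e.\ and $\psi_s = -\phi_s$. Substituting the polar decompositions into the two Bregman distances splits the cost as
\[
 \int_\Omega \bigl(|a| - q \cdot a + |b| + q \cdot b\bigr) \di |\eta| \;+\; 2 \int_\Omega \bigl(1 - q \cdot f_{\phi_s}\bigr) \di |\phi_s|.
\]
The second term is non-negative since $\|q\|_\infty \leq 1$, so the infimum is attained with $\phi_s = 0$ and the problem reduces to minimizing the first integral under the pointwise constraint $a + b = f_\eta$.

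Second, at a fixed point (where $|f_\eta| = 1$) the local problem is
\[
 G(f_\eta, q) = \inf_{a + b = f_\eta}\bigl\{ (|a| - q \cdot a) + (|b| + q \cdot b)\bigr\},
\]
i.e.\ the infimal convolution of two convex, non-negative functionals on $\R^m$. Fenchel duality identifies their conjugates as the indicator functions of the closed unit balls centered at $-q$ and $+q$, so
\[
 G(f_\eta, q) = \max_{\xi \in \R^m}\bigl\{\xi \cdot f_\eta : |\xi + q| \leq 1,\; |\xi - q| \leq 1\bigr\}.
\]
The feasible set is a lens-shaped intersection of two unit balls and decomposes into two spherical caps joined by the ``edge'' $\{\xi \cdot q = 0,\; |\xi|^2 = 1 - |q|^2\}$. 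Let $\varphi$ denote the angle between $f_\eta$ and $q$. If $|q| \leq |\cos\varphi|$, the candidate $\xi^\star = f_\eta - \mathrm{sign}(\cos\varphi)\,q$ maximizes $\xi \cdot f_\eta$ on one of the caps, and a direct check shows that the opposite-ball constraint is slack precisely in this regime; this gives $\xi^\star \cdot f_\eta = 1 - |q||\cos\varphi|$. Otherwise the candidate leaves the feasible set and the maximum is forced onto the edge: taking $\xi^\star \in q^\perp$ with magnitude $\sqrt{1 - |q|^2}$ aligned with the projection of $f_\eta$ onto $q^\perp$ gives $\xi^\star \cdot f_\eta = |\sin\varphi|\sqrt{1 - |q|^2}$. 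These two expressions constitute formula \eqref{eq:IC_integrand}.

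Third, to see the infimum is attained I would read off pointwise minimizers $a^\star, b^\star$ from the KKT conditions $a^\star/|a^\star| = \xi^\star + q$ and $b^\star/|b^\star| = \xi^\star - q$ (with one of the two vanishing in the first regime, where the corresponding constraint is slack). A short computation fixes their magnitudes from $a^\star + b^\star = f_\eta$ and shows that both are non-negative exactly in the relevant regime. Measurability in $x$ follows since $q$ is continuous, $f_\eta$ is measurable, and the selection is explicit; the resulting measures $\phi^\star = a^\star |\eta|$, $\psi^\star = b^\star |\eta|$ realize the asserted value. I expect the principal obstacle to be the geometric case analysis in step two: one must identify cleanly when the unconstrained cap maximizer leaves the lens and the optimum slides onto the edge, which is precisely the transition $|q| = |\cos\varphi|$, and verify at this transition that both candidate formulas agree (they equal $\sin^2\varphi$). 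A minor subtlety in step one is that if $|q| = 1$ on a set of positive $|\phi_s|$-measure, the singular-part cost can vanish and minimizers with $\phi_s \neq 0$ become admissible, but this does not change the value of the infimum.
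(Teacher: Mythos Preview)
Your argument is correct and takes a genuinely different route from the paper's. The paper works entirely through Fenchel duality at the level of measures: it bounds $(f_1 \Box f_2)(\eta) \geq (f_1^* + f_2^*)^*(\eta)$, restricts the biconjugate supremum to continuous $w$ to reduce to a pointwise constrained maximization, solves that via a Lagrangian/KKT computation, and then separately establishes the matching upper bound by exhibiting explicit $z_* = g\,|\eta|$ (with a truncation $g^\alpha$ and a limiting argument for $|q|=1$). You instead use the Lebesgue decomposition of $\phi,\psi$ with respect to $|\eta|$ to reduce the measure-level infimal convolution directly to a pointwise one, and then solve the pointwise problem by the same Fenchel duality but with a clean geometric description of the feasible lens (caps versus edge) in place of Lagrange multipliers. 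Your reduction avoids the awkward passage through $\M^*$ and the ad hoc restriction to $\CO$; the paper's approach avoids having to justify the interchange of infimum and integral.

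One small overclaim to flag: in Step~3 you assert that the infimum \emph{is attained}, but on the set where $|q(x)|=1$ (or where $|q|\to 1$) the pointwise magnitudes $s,t$ blow up like $(1-|q|^2)^{-1/2}$, so $a^\star$ need not lie in $L^1_{|\eta|}$ and the infimal convolution is only approached along a sequence. This does not affect the \emph{value} in the theorem and is easily repaired by a truncation $a^\star \mathbf{1}_{\{|a^\star|\le \alpha\}}$ followed by $\alpha\to\infty$; this is exactly the $g^\alpha$ device the paper invokes for its upper bound. Your closing remark about $\phi_s\neq 0$ when $|q|=1$ is the singular-part analogue of the same phenomenon.
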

%
 Since we already proved the point-wise case in \cite{2016debiasing} and the proof follows analogously, we have only included it in the Appendix.
%
At first we recognize that $\ICBM$ behaves like \eqref{eq:BDMabs} on one part of its domain, and that we are in particular losing the sign of $q$ everywhere. 
Furthermore, since the Bregman distance is convex in its first argument if we fix the second, this functional is convex. 
For a detailed description of the behavior, we at first pass over to the gradient setting again, i.e. for $q \in \mathcal{C}(\D u)$
\begin{align}
\ICBM^q( \DM v, \DM u) =  \left[D_{\| \cdot \nM}^q(\cdot,\DM u) \Box D_{\| \cdot \nM}^{-q}(\cdot,-\DM u)\right](\DM v) 
= \int_\Omega G(f_{\DM v},q) \di |\DM v|. 
\label{eq:BDIC}
\end{align}
We again distinguish two different situations: 
either $u$ has an edge, which corresponds to a non-zero (distributional) gradient, or $u$ is constant with vanishing gradient. 
In the former we find that $|q|= |f_{\DM u}| =1$, which is only possible in the ``else''-case of \eqref{eq:IC_integrand}, and the functional vanishes. 
This results in no penalization of $\DM v$, which allows to introduce an edge of arbitrary height to $v$ for free, hence again locating edges at the same positions.
We observe that here the direction of the edge is arbitrary and refer the reader to \cite{2016debiasing} for some further elaboration on that. 
In case it is constant, $u$ does not offer any structural information, but it is desirable to nevertheless have a regularizing effect on $v$ or $\DM v$, respectively. 
However, the interpretation is a bit more delicate in that case.
The subgradient $q$ here is a vector from the unit ball with length $|q|\leq 1$. 
\begin{figure}
   \center
   \begin{tabular}{B{6cm}B{1cm}B{6cm}}
      \includegraphics[width=0.2\textwidth]{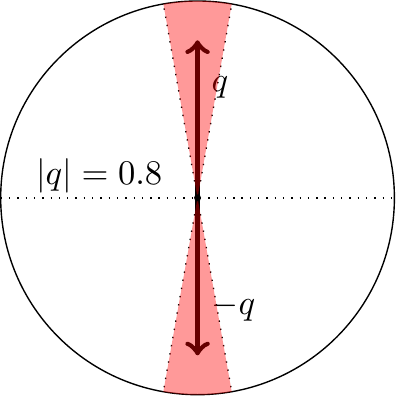} \hfill && 
      \includegraphics[width=0.2\textwidth]{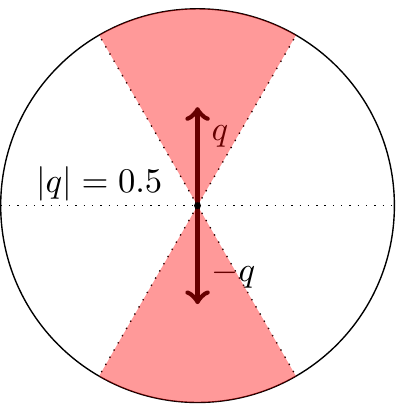} \\
      (a) Large magnitude of $q$, high probability of an edge and small range of possible directions. & &
      (b) Medium magnitude of $q$, medium probability of an edge and wider range of possible directions.
   \end{tabular}
   \caption{Infimal convolution of two $\| \cdot \nM$-Bregman distances. 
 In order to minimize the functional \eqref{eq:BDIC}, it is beneficial to pick $f_\eta$ respectively $f_{\DM v}$ from the colored range, 
 which narrows with increasing $|q|$. 
 Hence $|q|$ serves as a probability of how likely an edge is at that position, while its direction implies a range for the direction of the jump.}
   \label{fig:infconv_angles2}
\end{figure}	
Inserting the conditions into the integrand of $\eqref{eq:IC_integrand}$ yields an upper bound for the value of the integrand in the first case, and a lower bound for the second case. 
For the first case and $|q| < |\cos(\varphi)|$ we have
\begin{align*}
 1 - |\cos ( \varphi )| |q| < 1 - |q|^2.
\end{align*}
For the second case we find 
\begin{align*}
 |q| \geq |\cos(\varphi)| \Leftrightarrow 1 - |q|^2 \leq 1 - |\cos(\varphi)|^2 \Leftrightarrow \sqrt{1 - |q|^2} \leq |\sin(\varphi)|
\end{align*}
and thus for the value of the integrand 
\begin{align*}
 |\sin(\varphi)| \sqrt{1 - |q|^2} \geq 1 - |q|^2. 
\end{align*}
We combine both results and obtain
\begin{align*}
 1 - |\cos ( \varphi )| |q| < |\sin(\varphi)| \sqrt{1 - |q|^2}. 
\end{align*}
So in order to minimize the functional it is beneficial to either assign no mass to $\DM v$ or to find a combination of $f_{\DM v}$ and $q$ such that $|q| < |\cos(\varphi)|$. 
Figure \ref{fig:infconv_angles2} illustrates the situation for different values of $|q|$. 
The larger the magnitude of $q$, the narrower is the range of vectors around $q$ that we can pick to fulfill $|q| < |\cos(\varphi)|$. 
Hence the functional tries to align $f_{\DM v}$ to $q$ to obtain the smallest penalty. 
We again find that $|q|$ can serve as a probability of how likely an edge is at that position, while its direction implies a range for the direction of the jump which narrows with increasing size of $q$.
\begin{myprop}
Let $u,v \in \BV$ with derivatives $\DM u, \DM v \in \M$, and $p \in \partial \TV(u)$.  
Then 
\begin{align*}
\TV(v) - |\langle p,v \rangle | \geq [D_\TV^p(\cdot,u) \Box D_\TV^{-p}(\cdot,-u)](v) \geq [D_{\| \cdot \nM}^q(\cdot, \DM u) \Box D_{\| \cdot \nM}^{-q}(\cdot, -\DM u)](\DM v)
\end{align*} 
respectively
\begin{align*}
\TV(v) - |\langle p,v \rangle | \geq \ICB^p(u,v) \geq \ICBM^q(\DM u, \DM v)
\end{align*}
for some $q \in \partial \| \DM u \nM$.
\end{myprop}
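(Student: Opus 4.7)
The plan is to prove each of the two inequalities separately, exploiting the absolute one-homogeneity of $\TV$ and the chain rule statement already established in Proposition \ref{prop:BregmanDistance}.

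First I record a simplification. Since $\TV$ is absolutely one-homogeneous and $p\in\partial\TV(u)$, the identity $\TV(u)=\langle p,u\rangle$ holds, and therefore
\begin{align*}
D_\TV^p(v,u) = \TV(v) - \langle p,v\rangle.
\end{align*}
Analogously, absolute one-homogeneity implies $-p\in\partial\TV(-u)$ with $\TV(-u)=\langle -p,-u\rangle=\langle p,u\rangle$, so
\begin{align*}
D_\TV^{-p}(v,-u) = \TV(v) + \langle p,v\rangle.
\end{align*}
At the measure level, the same argument shows $-q\in\partial\|{-}\D u\nM$ whenever $q\in\partial\|\D u\nM$, and $\D^*(-q)=-p$, so Proposition \ref{prop:BregmanDistance} applies equally to $D_\TV^{-p}(\cdot,-u)$.

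For the first inequality, I would test the infimal convolution $\ICB^p(v,u) = \inf_{v=\phi+\psi}\{D_\TV^p(\phi,u) + D_\TV^{-p}(\psi,-u)\}$ with the two trivial decompositions $(\phi,\psi)=(v,0)$ and $(\phi,\psi)=(0,v)$. Using the simplifications above (and $\TV(0)=0$, $\langle p,0\rangle=0$), these candidates yield the upper bounds $\TV(v)-\langle p,v\rangle$ and $\TV(v)+\langle p,v\rangle$ respectively. Taking the minimum of the two gives exactly $\TV(v) - |\langle p,v\rangle|$, which is therefore an upper bound for the infimal convolution.

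For the second inequality, I would apply Proposition \ref{prop:BregmanDistance} inside the infimum to replace each $\TV$-Bregman distance by its measure-level counterpart, rewriting
\begin{align*}
\ICB^p(v,u) = \inf_{v=\phi+\psi}\Bigl\{ D_{\|\cdot\nM}^q(\D\phi,\D u) + D_{\|\cdot\nM}^{-q}(\D\psi,-\D u) \Bigr\}.
\end{align*}
By linearity of the distributional derivative, every admissible decomposition $v=\phi+\psi$ in $\BV$ produces a decomposition $\D v = \D\phi + \D\psi$ of $\D v$ in $\M$. Consequently the infimum on the right is taken over a subset of the decompositions $\D v = \mu+\nu$ with $\mu,\nu\in\M$ used in the definition of $\ICBM^q(\D v,\D u)$. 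Since passing to a larger admissible set can only decrease an infimum, the asserted inequality follows.

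The only real subtlety — and hence the main obstacle — is the bookkeeping around the subgradient correspondence: one has to ensure that $-p\in\partial\TV(-u)$ matches $-q\in\partial\|{-}\D u\nM$ via $\D^*(-q)=-p$, so that Proposition \ref{prop:BregmanDistance} can be invoked uniformly on both summands of the infimal convolution. Everything else reduces to choosing the two distinguished decompositions for the first inequality and recognizing the set inclusion of admissible decompositions for the second.
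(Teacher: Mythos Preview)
Your proposal is correct and follows essentially the same route as the paper: the first inequality is obtained by testing the two trivial decompositions $(v,0)$ and $(0,v)$ and taking the minimum, and the second by invoking Proposition~\ref{prop:BregmanDistance} to pass to the measure level and then enlarging the admissible set of decompositions from $\{\DM w : w\in\BV\}$ to all of $\M$. Your explicit bookkeeping on the subgradient correspondence $-p\leftrightarrow -q$ is a helpful clarification that the paper leaves implicit.
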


\begin{proof}
For $w=0$ and $w=v$ we have
\begin{align*}
[D_\TV^p(\cdot,u) \Box D_\TV^{-p}(\cdot,-u)](v) &= \inf_{w \in \BV} \TV(v-w) - \langle p,v-w \rangle + \TV(w) + \langle p,w \rangle \\
&\leq \TV(v) - \langle p,v \rangle, \\
[D_\TV^p(\cdot,u) \Box D_\TV^{-p}(\cdot,-u)](v) &= \inf_{w \in \BV} \TV(v-w) - \langle p,v-w \rangle + \TV(w) + \langle p,w \rangle \\
&\leq \TV(v) + \langle p,v \rangle,
\end{align*}
which yields the first inequality. Then by definition we have
\begin{align*}
&\quad \;  [D_\TV^p(\cdot,u) \Box D_\TV^{-p}(\cdot,-u)](v) \\
&= \inf_{w \in \BV}  \TV(v-w) - \langle p,v-w \rangle + \TV(w) + \langle p,w \rangle \\
&= \inf_{w \in \BV}  \| \DM v - \DM w \nM - \langle q, \DM v - \DM w \rangle + \| \DM w \nM + \langle q, \DM w \rangle \\
&= \inf_{z = \DM w \in \M}  \| \DM v - z \nM - \langle q, \DM v - z \rangle + \| z \nM + \langle q, z \rangle  \\
&\geq \inf_{z \in \M}  \| \DM v - z \nM - \langle q, \DM v - z \rangle + \| z \nM + \langle q, z \rangle \\
&= [D_{\| \cdot \nM}^q(\cdot, \DM u) \Box D_{\| \cdot \nM}^{-q}(\cdot, -\DM u)](\DM v).
\end{align*}
\end{proof}
As a consequence, minimizing $\ICB^p(u,v)$ immediately implies a small $\ICBM^q(\DM v, \DM u)$ and hence the desired behavior for joint reconstruction. 
Following \cite{Moeller:ColorBregmanTV}, we propose the following iteration scheme: 
\begin{align}
u_i^{k+1} \in \argmin_{u_i \in \BV} &\Bigg\{ \alpha_i H_{f_i}(K_i u_i) + w_{ii} D_{\TV}^{p_i^k}(u_i, u_i^k) + \sum_{\substack{j=1 \\ j \neq i}}^N w_{ij} \ICB^{p_j^k}(u_i,u_j^k) \Bigg\}. 
\label{eq:JointReconstructionScheme2}
\end{align}
Similarly to \eqref{eq:ColorBregmanIteration}, we solve the latter problem for one $u_i$ in each step, using the subgradients of the other $u_j, j \neq i$ as a-priori information. 
We hence fit the edge set of $u_i^{k+1}$ to the edge sets of all $u_j^k$, now excluding the direction/sign constraint via the infimal convolution. 
We emphasize the use of the usual Bregman distance on the diagonal, which is the obvious choice, since in contrast to all the foreign channels the sign should be relevant on the own channel. 

Starting with no prior knowledge about subgradients, i.e. $p_i^0 = 0$ for all $i = 1, \dots, N$, the first results $u_i^1$ are separate TV-reconstructions without any coupling. 
We then update the subgradients for the first time and gain coupled reconstructions for all subsequent iterations. 
The general procedure is similar to the use of usual Bregman iterations \cite{Osher:AnIterativeRegularizationMethod, Burger:BregmanDistancesInInverseProblems}: 
Starting with a very high regularization ($\alpha_i$ small), the first iterates are very smooth and only contain large scales. 
Hence the subgradients essentially contain information about the edges of large image features. 
Since for Bregman iterations the regularization behaves proportional to $\frac{1}{\alpha_i k}$, the amount of regularization decreases with every iteration and we gain more structure in the subgradients, 
i.e. a better knowledge about edge locations of smaller features. 
The iteration has to be stopped when noise reappears in the images. 
The ratio between the parameters $\alpha_i$ is crucial for the outcome of the joint reconstruction, as we have to ensure that the reconstructions evolve equally fast. 

\begin{myrem}
 Unfortunately, we cannot immediately obtain the necessary subgradient updates from the optimality condition of the problem due to the involved infimal convolution. 
 Without an explicit structure of the subgradient (cf. \eqref{eq:ColorBregmanSubgradients}) it is in particular impossible to prove the well-definedness of the minimization problems in a similar manner as Theorem \ref{thm:existence}, since there is no particular reason for the problem to be coercive for an arbitrary subgradient $p_i$.
 For the same reason it is hard to characterize a potential convergence of procedure \eqref{eq:JointReconstructionScheme2} (cf. Remark \ref{rem:convergenceCBTV}). 
 However, for the numerical realization of the method, we shall find an alternative, numerical update for the subgradients using a primal-dual scheme.
\end{myrem}

We already pointed out that both methods \eqref{eq:ColorBregmanIteration} and \eqref{eq:JointReconstructionScheme2} naturally exclude the size of image gradients.
Interestingly, they additionally show a certain kind of invariance under positive rescaling of the data $f_i$. 
\begin{myprop}
 For any $c >0 $ let $H_{f_i}$ satisfy $H_{c f_i}(c u_i) = c^r H_{f_i}(u_i)$ for some $r \geq 1$. 
 Furthermore, for fixed $\alpha_i$ and $f_i$, denote the solution of \eqref{eq:ColorBregmanIteration} respectively \eqref{eq:JointReconstructionScheme2} by $u_i^{k+1}$. 
 Then the solution of \eqref{eq:ColorBregmanIteration} respectively \eqref{eq:JointReconstructionScheme2} for any rescaling $\tilde{f}_i = c_i f_i$ of the data $f_i$ by $c_i > 0$ and regularization parameter $\tilde{\alpha}_i = \alpha_i / c_i^{r-1}$ is given by $\tilde{u}_i^{k+1} = c_i u_i^{k+1}$.
\end{myprop}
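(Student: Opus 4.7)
The plan is to proceed by induction on the iteration index $k$, using that $\TV$ is positively one-homogeneous together with the prescribed scaling of the data fidelity. The base case is immediate: since both schemes are initialized with $u_i^0 = 0$ and $p_i^0 = 0$, setting $\tilde u_i^0 = c_i u_i^0 = 0$ and $\tilde p_i^0 = p_i^0 = 0$ trivially satisfies the claim. For the inductive step, I assume $\tilde u_j^k = c_j u_j^k$ and that the subgradients may be chosen so that $\tilde p_j^k = p_j^k$ for every $j$, which is permitted since $\partial \TV(c_j u_j^k) = \partial \TV(u_j^k)$ for $c_j > 0$.

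I then substitute the candidate $\tilde u_i = c_i u_i$ into the rescaled objective. The data term becomes $\tilde\alpha_i H_{c_i f_i}(c_i K_i u_i) = (\alpha_i/c_i^{r-1})\, c_i^r H_{f_i}(K_i u_i) = c_i\, \alpha_i H_{f_i}(K_i u_i)$ by the assumed homogeneity of $H$. For any Bregman distance appearing in \eqref{eq:ColorBregmanIteration}, expanding and using $\TV(c_i u_i) = c_i \TV(u_i)$ yields
\begin{equation*}
D_{\TV}^{p_j^k}(c_i u_i, c_j u_j^k) = c_i\bigl(\TV(u_i) - \langle p_j^k, u_i\rangle\bigr) + C_j,
\end{equation*}
where $C_j$ collects the contributions independent of $u_i$. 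Assembling the weighted sum, the entire rescaled objective equals $c_i$ times the original objective at $u_i$ plus an additive constant; since $c_i > 0$ the argmin is unchanged, giving $\tilde u_i^{k+1} = c_i u_i^{k+1}$.

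For scheme \eqref{eq:JointReconstructionScheme2}, the infimal convolution is handled by the change of variables $\phi = c_i \phi'$, $\psi = c_i \psi'$, which is a bijection between admissible decompositions $\phi + \psi = c_i u_i$ and $\phi' + \psi' = u_i$. Applying the previous Bregman expansion to both summands inside the infimum (noting that the replacement of $u,p$ by $-u,-p$ leaves the homogeneity argument intact) yields $\ICB^{p_j^k}(c_i u_i, c_j u_j^k) = c_i\, \ICB^{p_j^k}(u_i, u_j^k) + C_j'$, and the conclusion follows exactly as before. Finally, to close the induction on the subgradients, I combine the optimality condition with the chain-rule identity $\nabla H_{c_i f_i}(c_i v) = c_i^{r-1} \nabla H_{f_i}(v)$ and the choice $\tilde\alpha_i = \alpha_i/c_i^{r-1}$: the $c_i^{r-1}$ factors cancel, producing $\tilde p_i^{k+1} = p_i^{k+1}$, which by one-homogeneity lies in $\partial \TV(\tilde u_i^{k+1})$ as required. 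The main technical point is the bijection-of-decompositions step for the ICB; once that is in place, stripping off the $u_i$-independent constants from the two summands commutes with the infimum, and the remainder is a routine chase through the homogeneities.
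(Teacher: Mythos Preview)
Your argument is correct and follows the same route as the paper: both proofs rest on the positive one-homogeneity of $D_{\TV}^p(\cdot,u)$ and $\ICB^p(\cdot,u)$ in the first argument together with the scaling hypothesis on $H_{f_i}$, and the bijection-of-decompositions step you use for $\ICB$ is exactly how that one-homogeneity is verified. Your explicit induction on $k$ with the subgradient update (showing $\tilde p_j^k = p_j^k$ via $\partial\TV(c_j u_j^k)=\partial\TV(u_j^k)$ and the cancellation $\tilde\alpha_i\cdot c_i^{r-1}=\alpha_i$) makes rigorous a point the paper leaves implicit, and your constants $C_j$, $C_j'$ are in fact zero by Euler's identity $\langle p_j^k,u_j^k\rangle=\TV(u_j^k)$, which is why the paper can write $D_{\TV}^{p_j^k}(c_i u_i,u_j^k)=c_i\,D_{\TV}^{p_j^k}(u_i,u_j^k)$ without any additive remainder.
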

\begin{figure}[t!]
 \center
 \begin{tabular}{ccc}
   \includegraphics[height=5cm, width=0.3\textwidth]{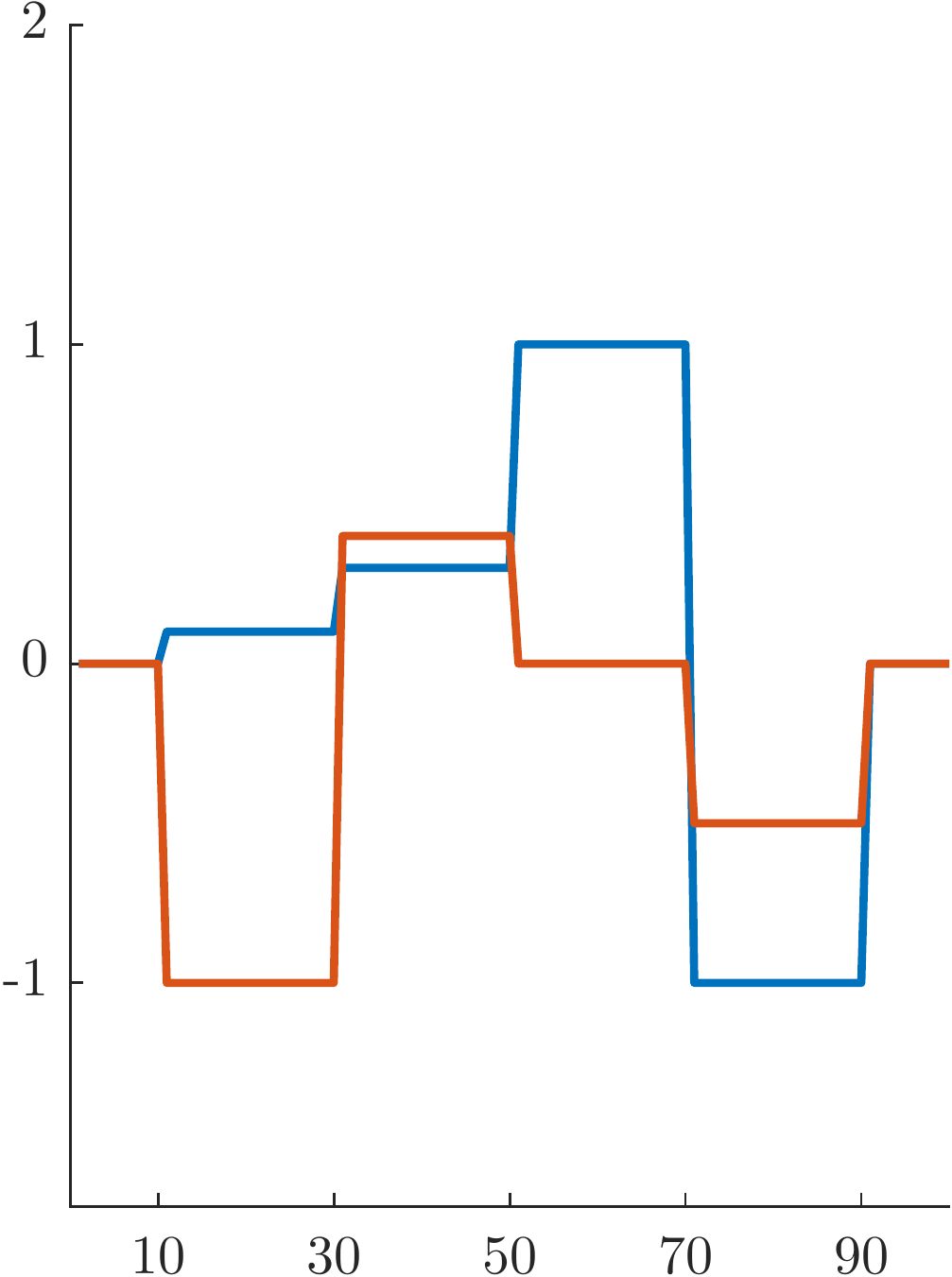} & 
   \includegraphics[height=5cm, width=0.3\textwidth]{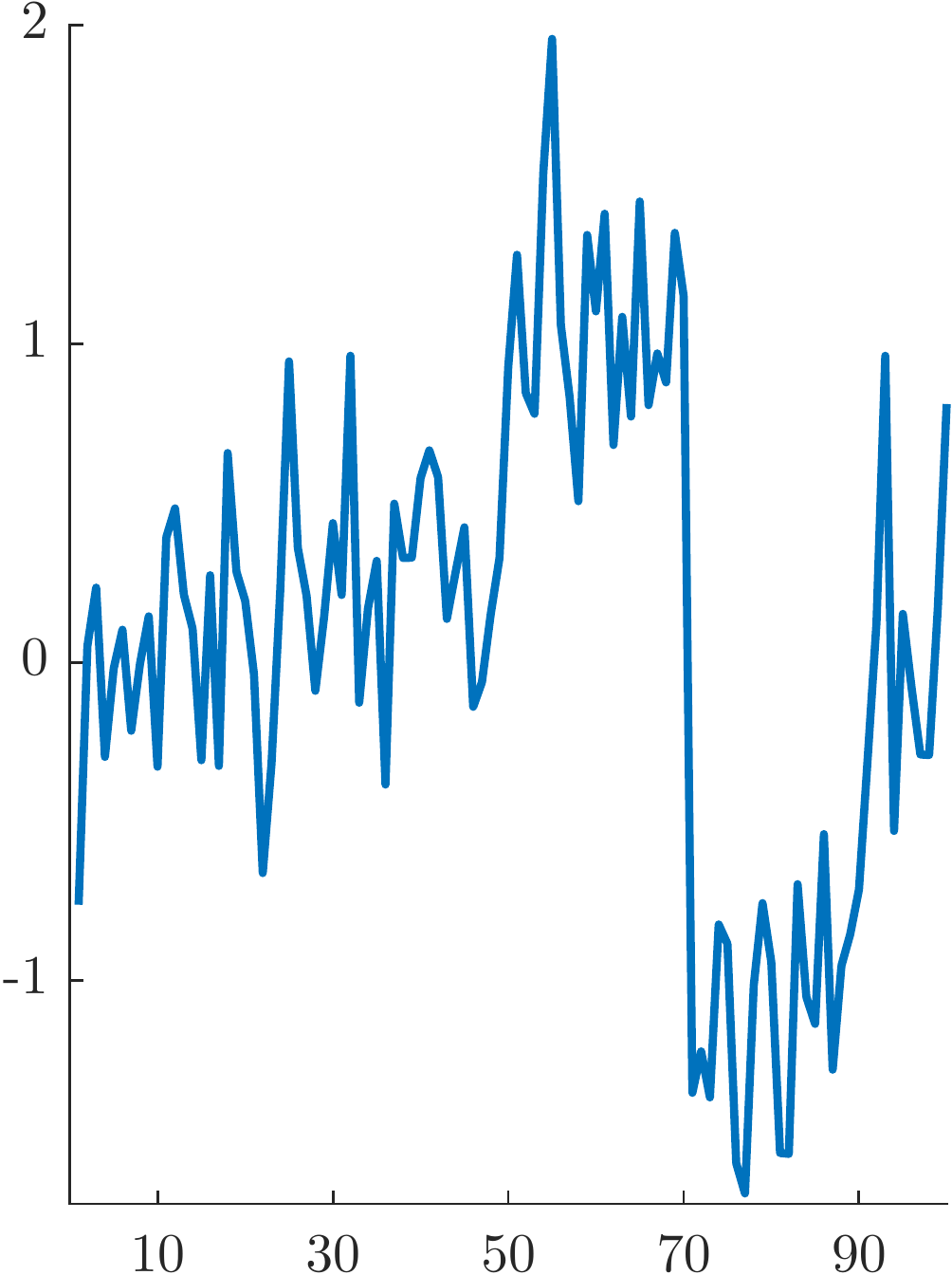} &
   \includegraphics[height=5cm, width=0.3\textwidth]{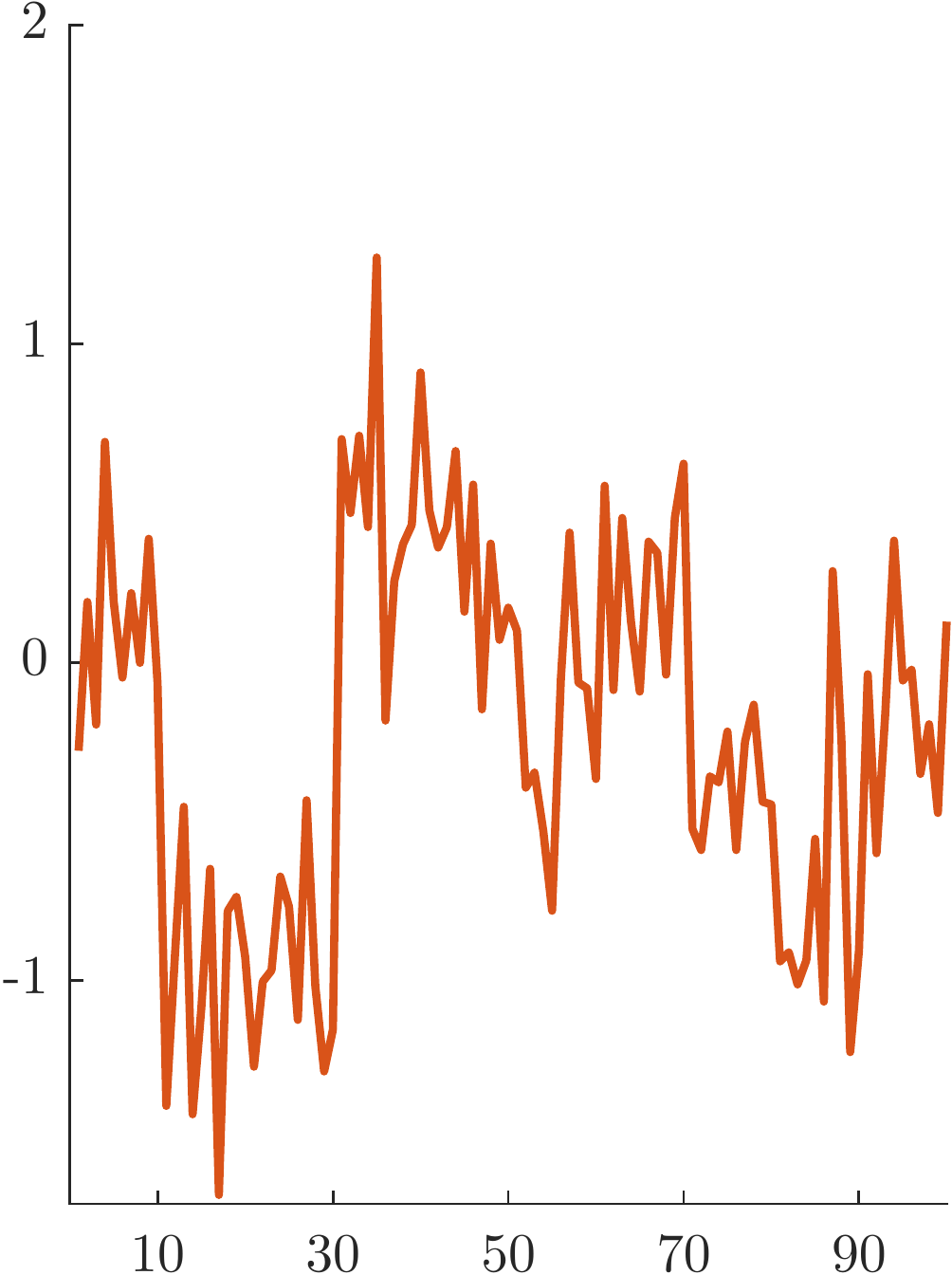} \\
   (a) Clean signals & (b) Noisy blue signal & (c) Noisy red signal
 \end{tabular}
 \caption{Example in one dimension: Clean signals and noisy signals corrupted by additive Gaussian noise with zero mean and standard deviation $\sigma = 0.35$. The edges in the clean signal are located at positions $10,30,50,70$ and $90$.}
 \label{fig:1Dexample}
\end{figure}
\begin{proof}
 By the absolute one-homogeneity of $\TV$ we have that 
 \begin{align*}
  D_\TV^{p_j^k}(c_i u_i,u_j^k) = c_i D_\TV(u_i,u_j^k) \quad\text{and} \quad \ICB^{p_j^k}(c_i u_i,u_j^k) = c_i \ICB^{p_j^k}(u_i,u_j^k),
 \end{align*}
 i.e. both functionals are positively one-homogeneous.
 We immediately conclude that 
 \begin{alignat*}{3}
  &&&\argmin_{u_i \in \BV} ~ \alpha_i H_{f_i}(K_i u_i) &&+ \sum_{j=1}^N w_{ij} D_{\TV}^{p_j^k}(u_i, u_j^k) \\ 
  &= &&\argmin_{u_i \in \BV} ~ \frac{\alpha_i}{c_i^r} H_{c_i f_i}(K_i c_i u_i) &&+ \sum_{j=1}^N \frac{w_{ij}}{c_i} D_{\TV}^{p_j^k}(c_i u_i, u_j^k)\\
  &= \frac{1}{c_i} && \argmin_{u_i \in \BV} ~ \frac{\alpha_i}{c_i^{r-1}} H_{\tilde{f}_i}(K_i u_i) &&+ \sum_{j=1}^N w_{ij} D_{\TV}^{p_j^k}(u_i, u_j^k).
 \end{alignat*}
 The proof for \eqref{eq:JointReconstructionScheme2} follows analogously.
\end{proof}
We mention that the assumptions on the data terms $H_{f_i}$ are easy to verify for quadratic data terms and the Kullback-Leibler divergence. 

\subsection{One-dimensional illustration}
As a first proof of concept, let us have a look at a one-dimensional example with two channels, featuring all the common issues of a typical joint reconstruction setting. 
Figure \ref{fig:1Dexample}(a) shows two piecewise constant signals, whose edges are located at the exact same positions, namely $10,30,50,70$ and $90$. 
However, only some of the jumps across the edges share the same direction, which is either a ``jump up'' or a ``jump down'' in both channels, while the others have opposite direction. 
The former case occurs at positions $30$, $70$ and $90$, the latter at positions $10$ and $50$. 
We would like to put special emphasis on the varying height of jumps in both channels. 
Since both signals feature small and large jumps at different positions, an overall scaling of the signals in order to have an approximately equal height of jumps everywhere is {\it not} possible. 
For example, scaling the first jump at position $10$ to equal height requires to ``shrink'' the red signal, which however leads to unreasonably large differences e.g. at location $70$. 
 \begin{figure}[t!]
 \center
 \begin{tabular}{ccc}
   \includegraphics[height=5cm, width=0.3\textwidth]{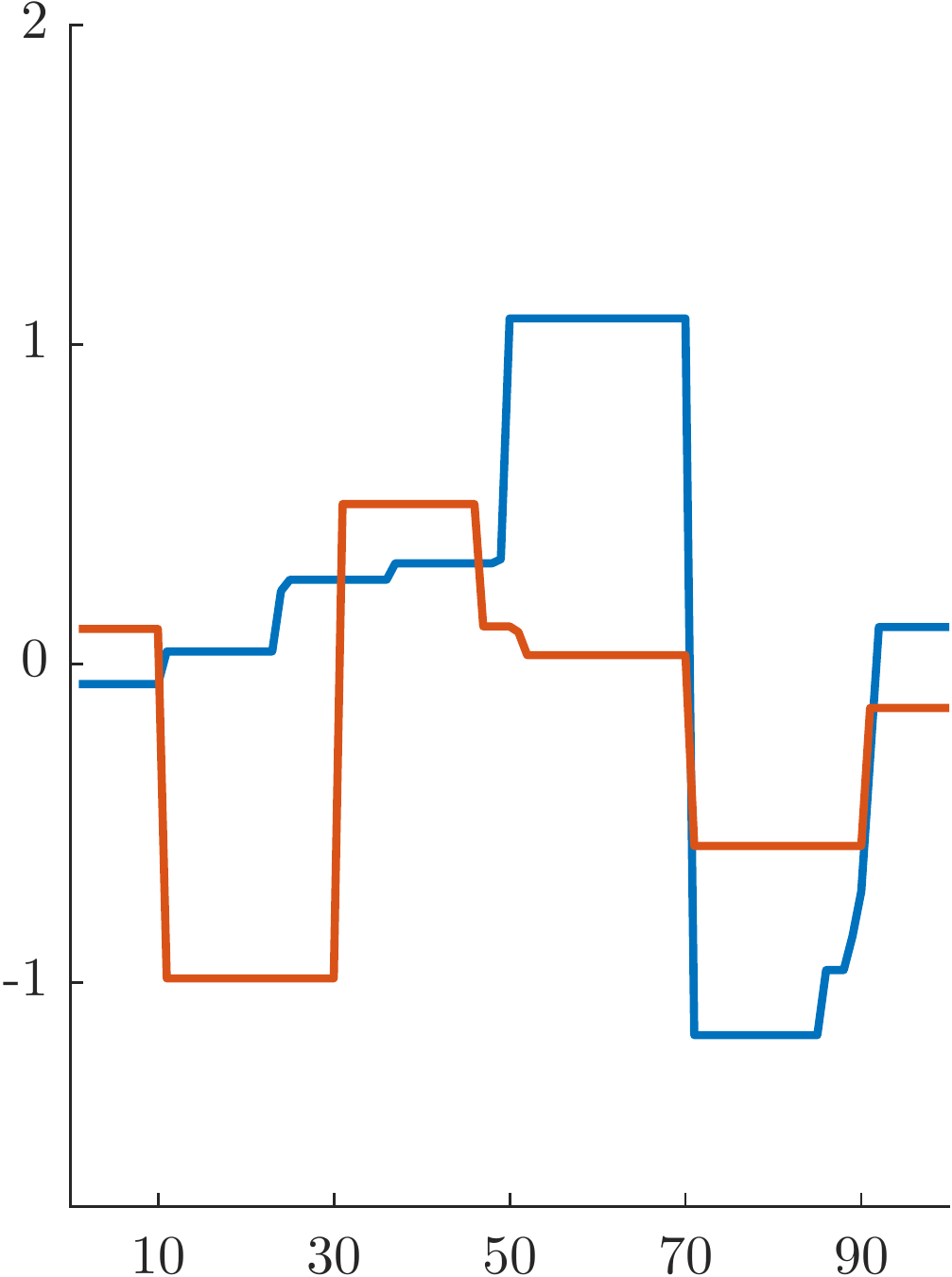} & 
   \includegraphics[height=5cm, width=0.3\textwidth]{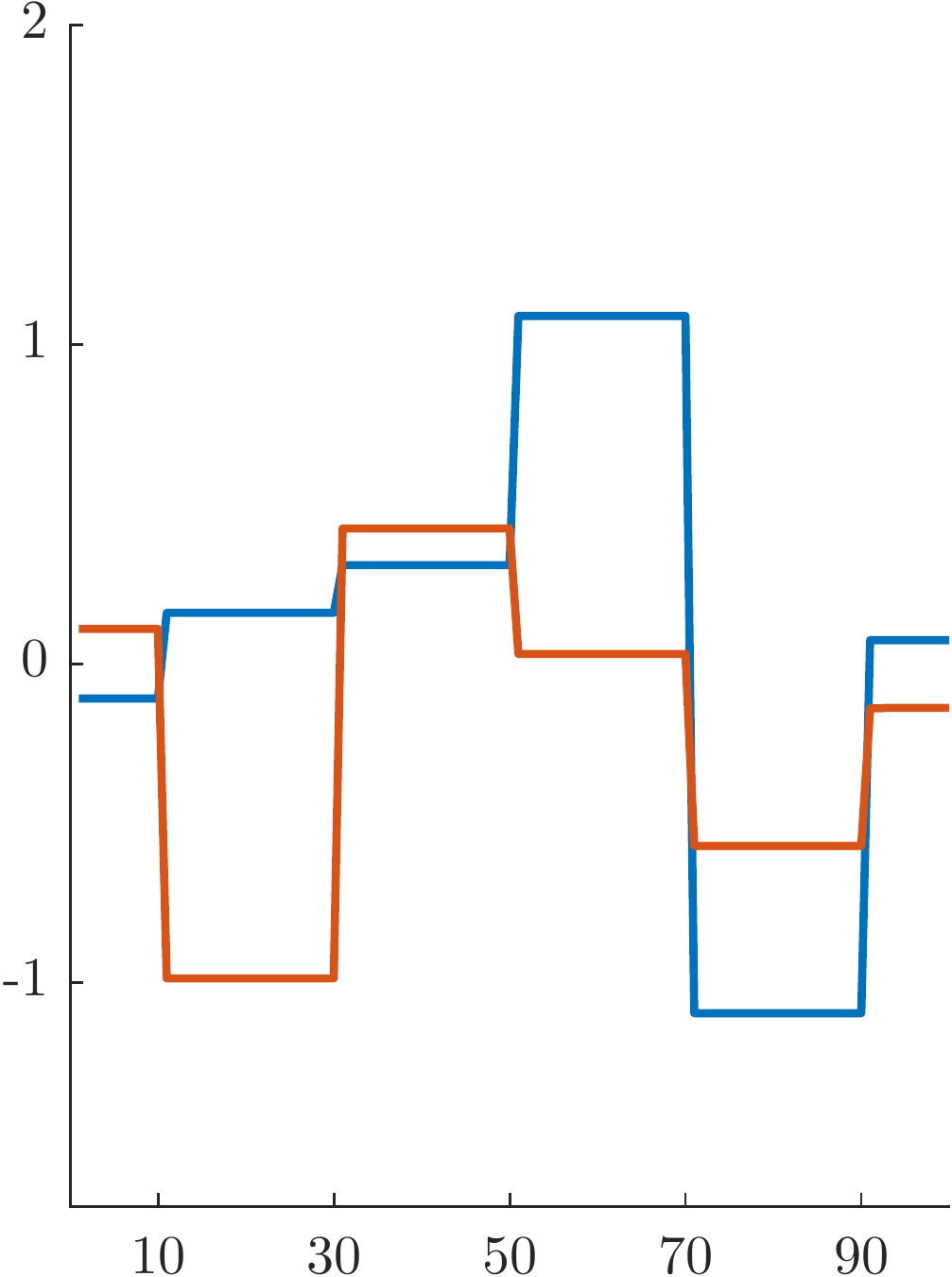} &
   \includegraphics[height=5cm, width=0.3\textwidth]{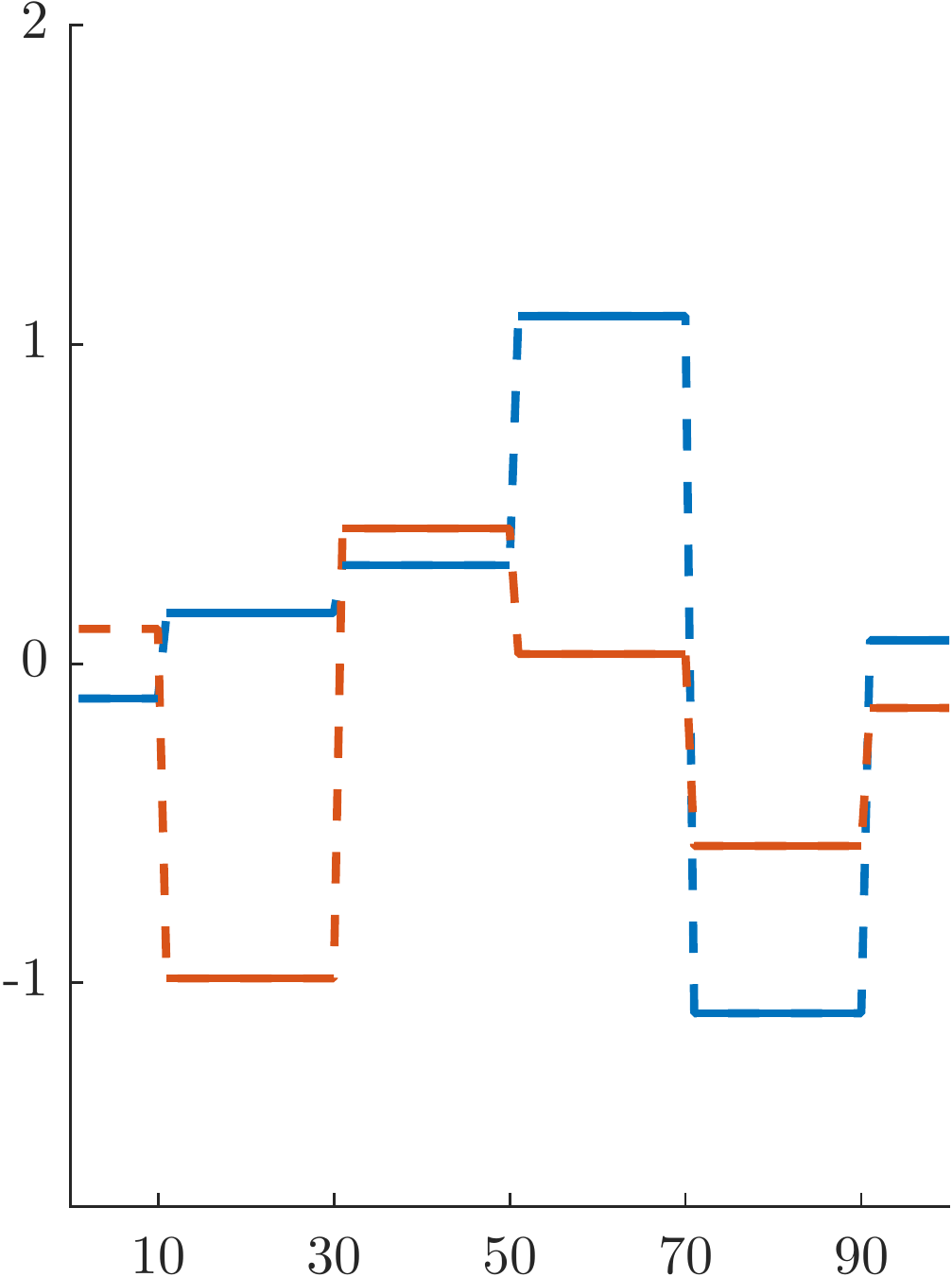} \\
   (a) Bregman TV & (b) Joint reconstruction & (c) Piecewise mean value 
 \end{tabular}
 \caption{Example in one dimension: Comparison of separate (a) and joint reconstruction (b). 
 Figure (c) shows the mean value of the noisy signals between the (known) edge locations as a comparison to (b).
 The parameters for the joint reconstruction are $\lambda = 0.5$, $\mu = 0.33$ and $7$ Bregman iterations, 12 Bregman iterations for Bregman TV.}
 \label{fig:1Dexample_results}
\end{figure}

Both signals have been artificially corrupted by additive Gaussian noise with zero mean and standard deviation $\sigma = 0.35$, which can be seen in Figure \ref{fig:1Dexample}(b) and (c).
Undoubtedly it is basically impossible to recover both signals without any further knowledge, since especially in the blue channel the noise covers the two small jumps of the signal. 
But since we know that the edges are located at the same positions, we can apply our method to couple the edge positions during the reconstruction.
Due to the Gaussian noise, the associated joint reconstruction method is: 
\begin{alignat*}{5}
u_{\mathrm{blue}}^{k+1} \in \arg \min_u & ~ \Big\{ \dfrac{\alpha}{2} \| u - f_{\mathrm{blue}} \|_2^2 && + \lambda D_{\TV}^{p_{\mathrm{blue}}^k}(u, u_{\mathrm{blue}}^k) && + (1-\lambda) \ICB^{p_{\mathrm{red}}^k}(u,u_{\mathrm{red}}^k) &&\Big\}, \\
u_{\mathrm{red}}^{k+1} \in \arg \min_u & ~ \Big\{ \dfrac{\beta}{2} \| u - f_{\mathrm{red}} \|_2^2 && + \mu D_{\TV}^{p_{\mathrm{red}}^k}(u, u_{\mathrm{red}}^k) && + (1-\mu) \ICB^{p_{\mathrm{blue}}^k}(u,u_{\mathrm{blue}}^k) && \Big\},
\end{alignat*}
with weights $\lambda,\mu \in [0,1]$. 
Note that in one dimension we indeed only couple the edge positions, since the ``direction'' of jumps across the edges is limited to ``up'' or ``down'', which is however excluded by the infimal convolution.  
Without any doubt an ordinary TV reconstruction would not be able to recover the original signal from the noisy data. 
Hence, as a comparison to the joint reconstruction result we provide individual reconstructions of both channels with a Bregman TV prior, which corresponds to $\lambda,\mu = 1$ and decouples the channels.
Since we iteratively include edge information at least from the own channel, chances are higher to find the right location of edges and the right height of jumps. 
For both separate and joint reconstructions, the crucial question is how many Bregman iterations to perform. 
For the former we chose to iterate until all edges that are present in the original signal became visible. 
The result can be found in Figure \ref{fig:1Dexample_results}(a). 

Though the denoising was successful and the individual appearance of the signals is acceptable, a large part of the edges is located at wrong positions and even some new edges are introduced to the signal.  
Even worse, comparing between channels, the assumption of equal edge positions is not met. 
In contrast, the joint reconstruction result (b) after $7$ Bregman iterations shows a perfect recovery of the edge sets of both signals. 
With regard to the intensities of the signal, or the height of the jumps, we obtain slightly different values than for the original signal. 
This is however not surprising, since the mean of this specific noise realization is not exactly zero. 
And indeed, taking the mean value of the noisy signals between the (known) edge locations in Figure \ref{fig:1Dexample_results}(c) reveals the exact same signal as the joint reconstruction, which confirms that the joint reconstruction is a perfect recovery of the data over the true edge set.  

A closer look at the infimal convolution delivers further insight into its behavior.
Recall that we have e.g. for the blue channel
\begin{align*}
 \ICB^{p_{\mathrm{red}}^k}(u_{\mathrm{blue}},u_{\mathrm{red}}^k) = \inf_{z_{\mathrm{blue}}} ~ D_{\TV}^{p_{\mathrm{red}}^k}(u_{\mathrm{blue}}-z_{\mathrm{blue}}, u_{\mathrm{red}}^k) + D_{\TV}^{-p_{\mathrm{red}}^k}(z_{\mathrm{blue}}, -u_{\mathrm{red}}^k).
\end{align*}
Here we seek for a minimizing decomposition of the blue signal into two parts, such that one part matches the subgradient $p_{\mathrm{red}}^k$ and the other part matches the negative subgradient $-p_{\mathrm{red}}^k$. 
To do so, $z_{\mathrm{blue}}$ has to compensate for jumps at positions where the two channels do not share the direction of the jump, i.e. at locations $10$ and $50$.
Figure \ref{fig:1Dexample_infConv}(a) shows the first part of the decomposition for both channels, namely $\mathbf{u}-\mathbf{z}$. 
Intuitively, the signal $\mathbf{u}-\mathbf{z}$ should only have jumps at locations where the signals share edges with the same direction and otherwise be constant, and indeed this is the case. 
In contrast, the variable $\mathbf{z}$ itself has to match the negative subgradient of the {\it other} channel, i.e. it has to have the same jump locations and directions as the negative signal from the other channel.
This can be seen in Figure \ref{fig:1Dexample_infConv}(b) and (c). 
Note that the signals do not have to share every edge, but also constant signals are possible which can be seen e.g. in (b) at location $30$.

We continue by showing how to solve the joint reconstruction problem numerically, before we get to a more sophisticated numerical example, namely PET-MRI joint reconstruction.
 \begin{figure}[t!]
 \center
 \begin{tabular}{ccc}
   \includegraphics[height=5cm, width=0.3\textwidth]{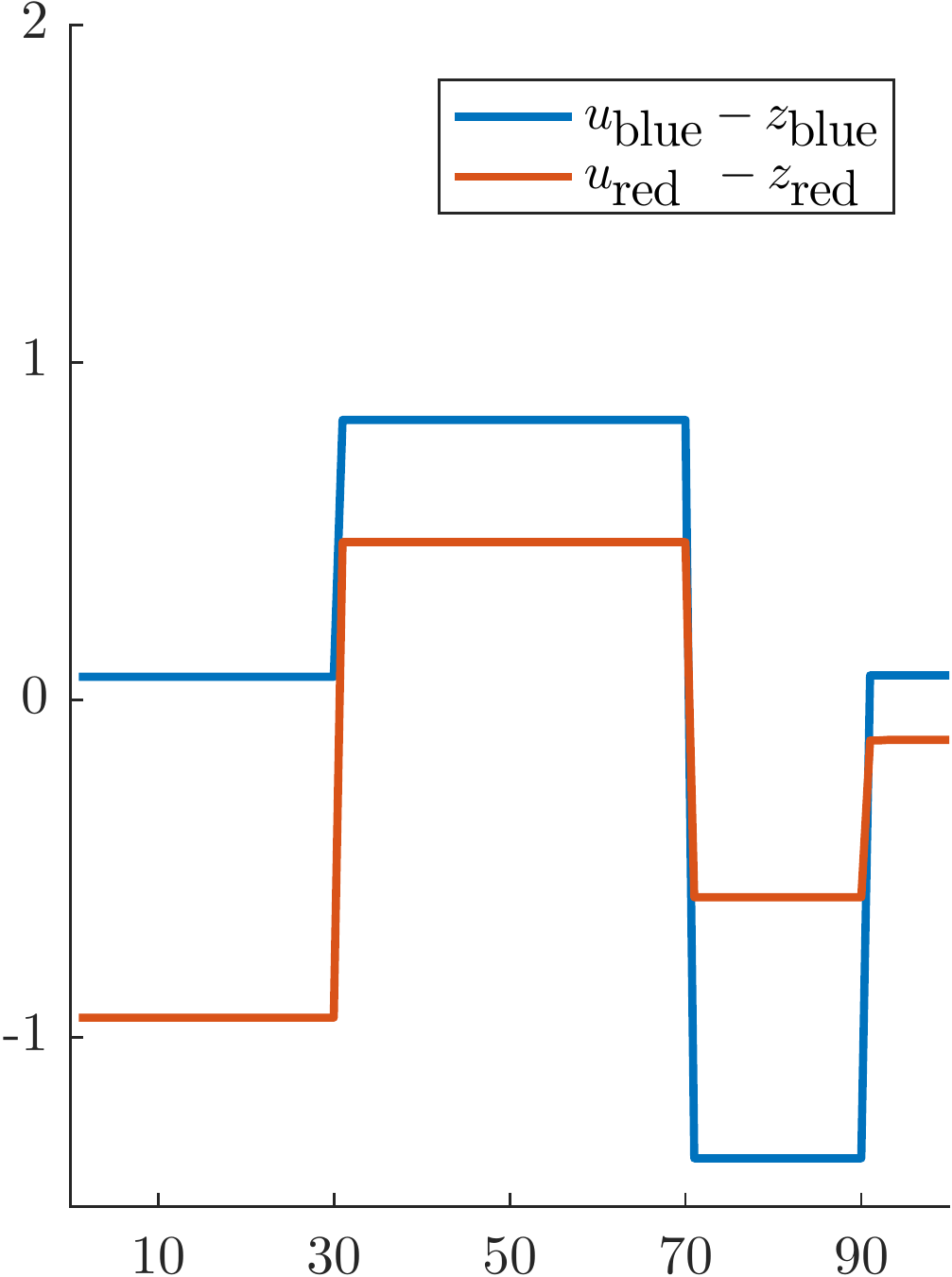} & 
   \includegraphics[height=5cm, width=0.3\textwidth]{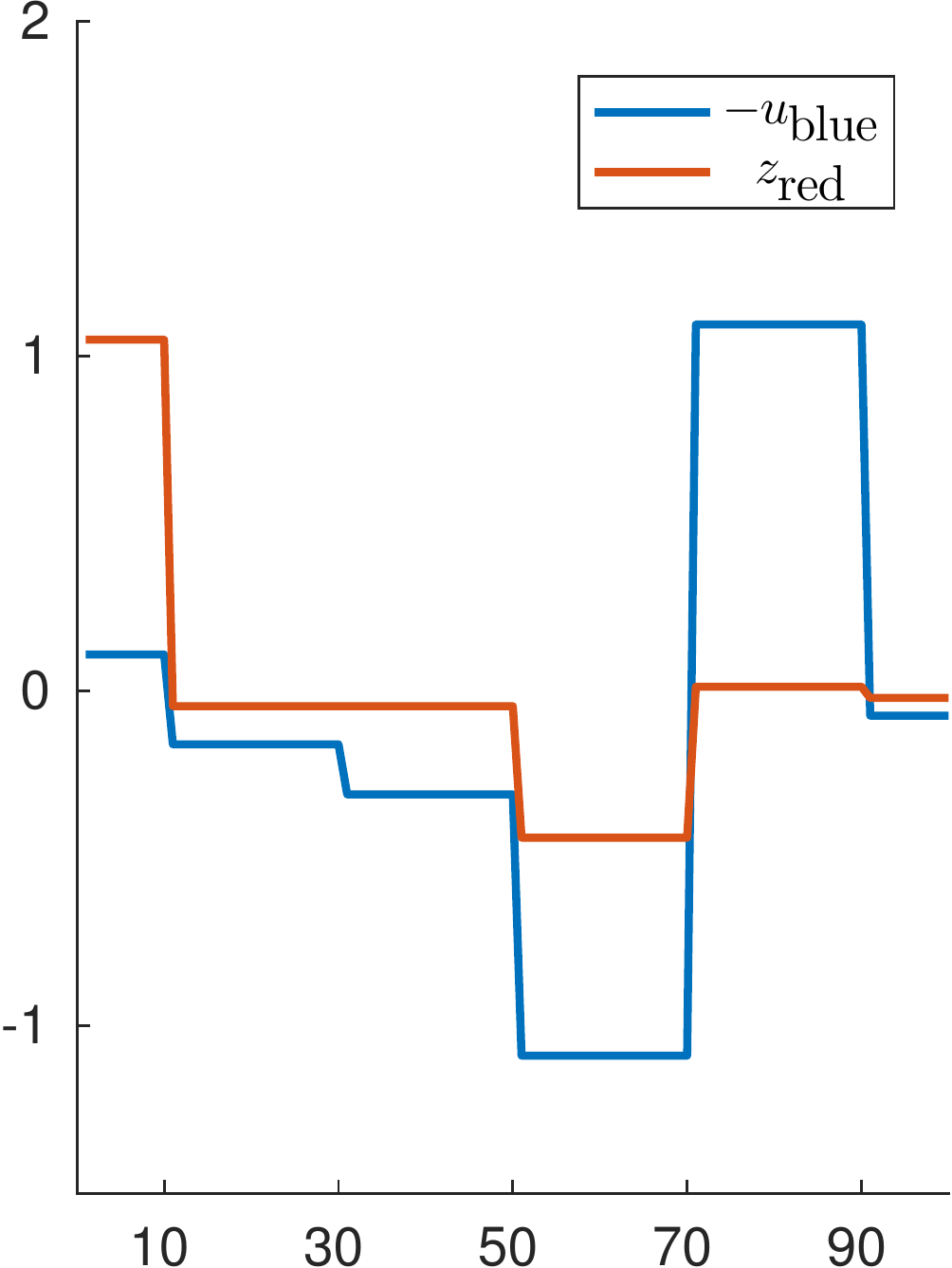} &
   \includegraphics[height=5cm, width=0.3\textwidth]{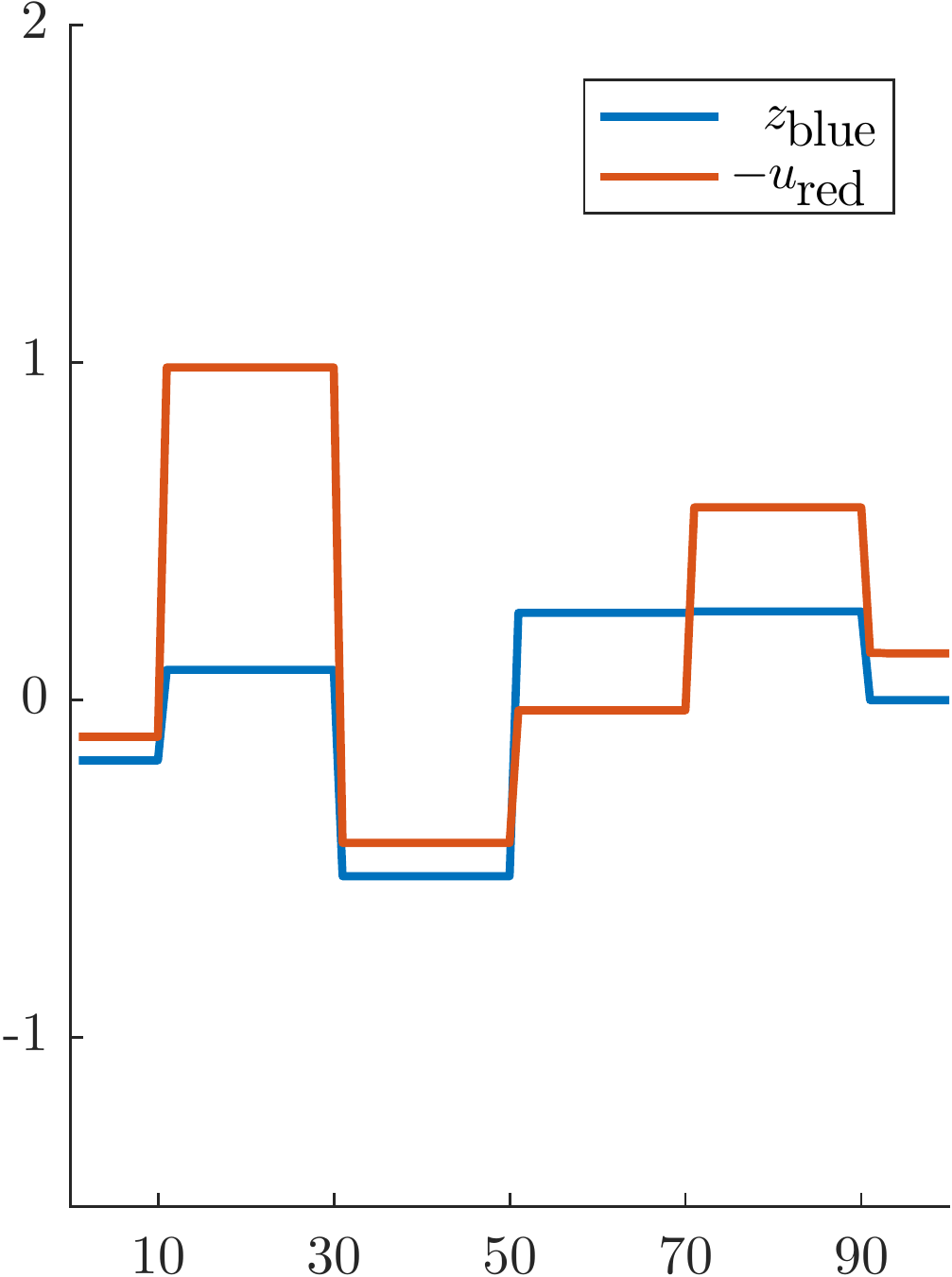} \\
   (a) $\mathbf{u}-\mathbf{z}$ & (b) $-u_{\mathrm{blue}}$ and $z_{\mathrm{red}}$ & (c) $-u_{\mathrm{red}}$ and $z_{\mathrm{blue}}$
 \end{tabular}
 \caption{Example in one dimension: Infimal Convolution.}
 \label{fig:1Dexample_infConv}
\end{figure}

\section{Numerical solution}\label{sec:numerics}
In this section we show how to solve the joint reconstruction problem numerically. 
Due to the non-differentiability of the involved Bregman distances we employ a primal-dual method (cf. e.g. \cite{Pock:MinimizingMumfordShah, Esser:GeneralFramework, ChambollePock}).
In general, the idea is to dualize every term of \eqref{eq:JointReconstructionScheme2} containing an operator using the notion of convex conjugates \cite{Ekeland:ConvexAnalysis}
\begin{align*}
h^*(y) = \sup_x ~ \langle y,x \rangle - h(x). 
\end{align*}
The goal is to obtain a saddle-point problem such that the proximal maps
\begin{align*}
\prox_{\gamma h}(y) = \arg \min_x ~ \dfrac{1}{2 \gamma} \|x - y\|_2^2 + h(x)
\end{align*}
for all the involved functions are easy to solve via point-wise operations or simple projections. 
This is e.g. the case for the characteristic function $\delta_S$ of a convex set $S$, i.e. 
\begin{align*}
\delta_S(y) = \begin{cases}
0 & y \in S \\
+ \infty & y \notin S,
\end{cases}
\end{align*}
where 
\begin{align*}
	\prox_{\gamma \delta_S}(y) = \proj_S(y).
\end{align*}
The advantage of the saddle-point formulation is that the proximal maps of dualized terms decouple, which allows us to treat every Bregman distance separately.
Since every additional channel simply introduces another infimal convolution we restrict ourselves to two channels and comment on the straight-forward extension to an arbitrary number of channels at the end of the section.  
We nevertheless use a notation with indices $i$ and $j$ rather than $1$ and $2$, in order to unify the derivation for later use and to make the notation as simple as possible. 
The channel which is currently reconstructed is referred to by $i$, while the foreign channel related to the infimal convolution is indexed by $j$.
So let us consider two inverse problems
\begin{align*}
K_i u_i = f_i
\end{align*}
$u_i \in \R^N$, $f_i \in \C^{M_i}$, $K_i \in \R^{M_i \times N}$ for $i = 1,2$, that we want to solve jointly for $u_i \in C_i$, where $C_i \subset \R^N$ are convex sets. 
We discretize the two dimensional gradient $\nabla \colon \R^N \to \R^{N \times 2}$ using standard forward differences (see e.g. \cite{ChambollePock}) and define the (isotropic) total variation of $u \in \R^N$ as 
\begin{align*}
 \TV(u) := \| \nabla u \|_1 = \sum_{i = 1}^{N} \sqrt{|(\nabla u)_{i,1}|^2 + |(\nabla u)_{i,2}|^2}.
\end{align*}
We make use of the specific structure of the subdifferential of $\TV$.
By the chain rule we find that $p \in \partial \TV(u)$ if and only if there exists $q \in \partial \|\cdot\|_1(\nabla u)$ such that $p = - \nabla \cdot q$.
Hence, let us first assume that we are already given a subgradient 
\begin{align*}
p_i^k = -\nabla \cdot q_i^k \in \partial \TV(u_i^k) 
\end{align*}
of the current approximations $u_i^k$. 
The next step of the method is then to solve
\begin{align*}
u_i^{k+1} &\in \arg \min_u ~ \alpha_i H_{f_i}(K_i u) + w_i D_{\TV}^{p_i^k}(u,u_i^k) + (1-w_i) \ICB^{p_j^k}(u,u_j^k) + \delta_{C_i}(u).
\end{align*} 
In order to derive the saddle-point formulation of the problem we have to dualize every involved term except for the characteristic function. 
Recall that we cannot access a general closed form representation of $\ICB$. 
\begin{algorithm}[t!]
\caption{\textbf{Two-channel joint reconstruction (One step)}}
{
\begin{algorithmic}[1]
\Require $f_i$, $w_i$, $q_i^k,q_j^k$, $\tau,\sigma > 0$
\Ensure $u = \bar{u} = K_i^*f_i, z = \bar{z} = 0, \; y_1 = y_2 = y_3 = y_4 = 0$
	\While{$\sim$ stop crit}
		\State {\it Dual updates}
		\State $y_1 \leftarrow \prox_{H_{f_i}^*}(y_1 + \sigma K_i \bar{u} )$
		\State $y_2 \leftarrow \proj_{S_2}(y_2 + \sigma \nabla \bar{u})$
		\State $y_3 \leftarrow \proj_{S_3}(y_3 + \sigma \nabla (\bar{u} - \bar{z}))$
		\State $y_4 \leftarrow \proj_{S_4}(y_4 + \sigma \nabla \bar{z})$
		\State {\it Primal updates}
		\State $\tilde{u} \leftarrow \proj_{C_i} ( u - \tau (K_i^* y_1 - \nabla \cdot y_2 - \nabla \cdot y_3) )$
		\State $\tilde{z} \leftarrow z - \tau (\nabla \cdot y_3 - \nabla \cdot y_4)$
		\State {\it Overrelaxation}
		\State $(\bar{u}, \bar{z}) \leftarrow 2 (\tilde{u}, \tilde{z}) - (u,z)$
		\State $(u,z) \leftarrow (\tilde{u}, \tilde{z}) $		
	\EndWhile\\
\Return $u = u_i^k$
\end{algorithmic}
}
\label{alg:TwoChannel}
\end{algorithm}
Instead, we use the definition of the infimal convolution to introduce an additional auxiliary primal variable $z$ into the problem:
\begin{align}\label{eq:1step}
	&\min_{u,z} ~ H_{f_i}(K_i u) + w_i D_{\TV}^{p_i^k}(u,u_i^k) + (1-w_i) D_{\TV}^{p_j^k}(u-z,u_j^k) + (1-w_i) D_{\TV}^{-p_j^k}(z,-u_j^k) + \delta_{C_i}(u).
\end{align}
We observe that the subproblem we have to solve in every iteration basically consists of only two parts: A (differentiable) data term and a sum of total variation Bregman distances with regard to different subgradients.
Due to the structure of their subgradients, the Bregman distances can be dualized as a shifted and scaled $\ell_1$-norm, which we show at the example of the first arising Bregman distance
\begin{align*}
	&\min_u ~ w_i D_{\TV}^{p_i^k}(u,u_i^k)
	= \min_u ~ w_i ( \|\nabla u\|_1 - \langle p_i^k, u \rangle ) 
	= \min_u ~ w_i \|\nabla u\|_1 - w_i \langle q_i^k, \nabla u \rangle \\
	= &\min_{u,b} \max_y ~ \langle y, \nabla u - b \rangle + w_i \|b \|_1 - \langle w_i q_i^k, b \rangle 
	= \min_u \max_y ~ \langle y, \nabla u \rangle - \max_b ~ \Big(\langle y + w_i q_i^k,b \rangle - w_i \|b\|_1 \Big) \\
	= &\min_u \max_y ~ \langle y, \nabla u \rangle - \delta_S(y). 
\end{align*}
Here, the set $S$ is defined as
\begin{align*}
	S = \{ s ~|~ \| s + w_i q_i^k \|_{\infty} \leq w_i \}.
\end{align*}
The proximal map for the characteristic function $\delta_S$ can be computed explicitly as a projection onto the set $S$, or a shifted projection onto the set $\tilde{S} = \{s ~|~ \| s \|_{\infty} \leq w_i \}$:
\begin{align*}
\prox_{\gamma \delta_S} (y) = \proj_S(y) = \proj_{\tilde{S}}(y + w_i q_i^k) - w_i q_i^k.
\end{align*}
Proceeding analogously with the two other Bregman distances and dualizing the data term we end up with the following primal-dual formulation: 
\begin{align}
\min_{u,z} \max_{y_1,\dots,y_4} &~ \langle y_1,K_i u \rangle + \langle y_2, \nabla u \rangle + \langle y_3,\nabla (u-z) \rangle + \langle y_4, \nabla z \rangle \nonumber \\
&- H_{f_i}^*(y_1) - \delta_{S_2}(y_2) - \delta_{S_3}(y_3) - \delta_{S_4}(y_4) + \delta_{C_i}(u),
\label{eq:PrimalDualNumericalProblem}
\end{align}
where 
\begin{align*}
S_2 &= \{s ~|~ \|s + w_i q_i^k \|_{\infty} \leq w_i \}, \\
S_3 &= \{s ~|~ \|s + (1-w_i) q_j^k \|_{\infty} \leq (1-w_i) \}, \\
S_4 &= \{s ~|~ \|s - (1-w_i) q_j^k \|_{\infty} \leq (1-w_i) \}.
\end{align*} 
Hence, all the proximal updates for the involved Bregman distances can be carried out as a projection on one of the sets $S_l$. 
We illustrate one step of the numerical algorithm in Algorithm \ref{alg:TwoChannel} and summarize the whole reconstruction process in Algorithm \ref{alg:Whole}.
In order to avoid too many indices, we drop the Bregman iteration number $k$ and still focus on only one channel. 
The other one follows analogously by simply exchanging the roles of $i$ and $j$. 

The extension to more than two channels is then easily done.
Since an additional channel adds a second infimal convolution to the problem, we can introduce another auxiliary primal variable to obtain two additional Bregman distances. 
These can be treated as before by a projection onto the sets associated to the involved subgradients. 


\subsection{Stopping criteria}
For appropriate results it is necessary to decide when to consider the single subproblems \eqref{eq:1step} as converged. 
We use a modified primal-dual gap in the following sense: 
the dual problem of \eqref{eq:1step} (respectively \eqref{eq:PrimalDualNumericalProblem}) is given by 
\begin{align*}
 \max_{y_1, \dots, y_4} ~ - H_{f_i}^*(y_1) \quad \text{s.t.} \quad \begin{cases}
                                                              y_j \in S_j,\quad  j = 2,3,4, \\
                                                              K_i^* y_1 - \nabla \cdot y_2 - \nabla \cdot y_3 \in C_i, \\
                                                              \nabla \cdot y_3 - \nabla \cdot y_4 = 0.
                                                             \end{cases}
\end{align*}
We hence define a primal-dual gap for the $k$-th Bregman iteration of the $i$-th channel as 
\begin{align*}
 \G_i^k(u,z,y_1) = H_{f_i}(K_i u) + w_i D_{\TV}^{p_i^k}(u,u_i^k) + (1-w_i) D_{\TV}^{p_j^k}(u-z,u_j^k) + (1-w_i) D_{\TV}^{-p_j^k}(z,-u_j^k) + H_{f_i}^*(y_1),
\end{align*}
which has to converge to zero.
Note that due to the projection onto $S_j$ in every step of Algorithm \ref{alg:TwoChannel}, the constraint on $y_j$ for $j=2,3,4$ is trivially fulfilled. 
The latter two constraints however have to be checked separately alongside the gap. 
We relaxed the third constraint and normalized the gap by the number of primal pixels $N$ such that we consider the subproblem as converged if 
\begin{align*}
 &\G_i^k(u,z,y_1) / N \leq tol_{gap}, \quad
 K_i^* y_1 - \nabla \cdot y_2 - \nabla \cdot y_3 \in C_i, \quad
 \| \nabla \cdot y_3 - \nabla \cdot y_4 \| \leq tol_{constraint}.
\end{align*}
We remark that the evaluation of the gap does not require any severe computational costs since the application of the imaging operator $K_i$ and the gradients have to be done for the next iteration anyway.
It is also worth mentioning that in practice the second constraint seemed to be the best measure of convergence, since it was usually fulfilled last. 
Interestingly it can even serve as an indicator, which pixels in $u$ have not converged yet, since they directly correspond to the positions not fulfilling the second constraint. 

\subsection{Subgradient updates}
After convergence of both channels, we need to update the subgradient for the next Bregman iteration. 
Unfortunately we cannot find an update equation equivalent to (\ref{eq:ColorBregmanSubgradients}) directly from the optimality condition of the problem due to the involved infimal convolution.
But since we can access the dual variables, we can derive an alternative subgradient update. 
Note that for a stationary point $((u,z),(y_1,y_2,y_3,y_4))$ of problem \eqref{eq:PrimalDualNumericalProblem} the second dual variable $y_2$ needs to fulfill
\begin{align*}
	\nabla u \in \partial \delta_{S_2}(y_2).
\end{align*}
Recall that $\delta_{S_2}$ is the convex conjugate of a Bregman distance which is  proper, convex and lower semicontinuous. 
By duality, the above inclusion is hence equivalent to 
\begin{align*}
	y_2 \in \partial [w_i D_\TV^{p_i^k}(u,u_i^k)] \Leftrightarrow y_2 \in \partial \Big[ w_i \|\nabla u\|_1 - w_i \langle q_i^k, \nabla u \rangle \Big] \Leftrightarrow \dfrac{y_2}{w_i} \in \partial \|\nabla u\|_1 - q_i^k,
\end{align*}
which yields the subgradient update 
\begin{align}
	q_i^{k+1} = \dfrac{y_2}{w_i} + q_i^k \in \partial \|\nabla u\|_1 = \partial \TV(u). 
	\label{eq:SubgradientUpdate}
\end{align}
It is important to notice that the convergence of the algorithm obviously influences the quality of the obtained subgradient. 
In particular it can happen that a poor approximation does not even fulfill the requirements of a subgradient \eqref{eq:l1_subdifferential}, i.e. $|q_i^{k+1}|_l > 1 $ for some index $l$. 
This can be avoided by a sufficient convergence of the subproblem, or can even be manually corrected for if necessary.
With the tolerances chosen in our numerical studies however we did not experience any problems. 

\begin{algorithm}[t!]
	\caption{\textbf{Two-channel joint reconstruction}}
	{
	\begin{algorithmic}[1]
		\Require data $(f,g)$, weights $(\lambda,\mu)$, regularization parameters $(\alpha, \beta)$, $\#$ Bregman iterations $K$
		\Ensure $u^0, v^0 = 0$, $q_u^0, q_v^0 = 0$
		\For{$k=0$ to $K$}  
		\State Compute $u^{k+1}$ via Algorithm \ref{alg:TwoChannel}
		\State Update the subgradient $q_u^{k+1}$ via \eqref{eq:SubgradientUpdate}
		\State Compute $v^{k+1}$ via Algorithm \ref{alg:TwoChannel}
		\State Update the subgradient $q_v^{k+1}$ via \eqref{eq:SubgradientUpdate}
		\EndFor \\
		\Return $u^K, v^K$		
	\end{algorithmic}
	}
\label{alg:Whole}
\end{algorithm}

\section{PET-MR imaging and numerical results}\label{sec:petmri}
In the following we give some further motivation for the use of structural joint reconstruction using a topic of very recent interest, namely PET-MR imaging, which is the example for the numerical studies at the end of this section.
Similar studies can be found e.g. in \cite{Ehrhardt:JointReconstruction,Knoll2017}.

\subsection{PET-MRI modeling}
While widely established imaging techniques such as PET-CT only allow for a sequential data acquisition, the coupling of PET and MR imaging devices offers further opportunities of mathematical reconstruction and finally clinical diagnosis. 
Since these scanners are able to \textit{simultaneously} acquire functional PET data and structural MR data they provide access to spatially and temporally registered data of two complementing imaging techniques. 
PET imaging is capable of showing metabolic processes and is therefore e.g. useful to locate tumors and diseased areas of the heart, but unfortunately suffers from a poor resolution and a lack of anatomical information. 
In contrast, MR imaging yields a very high resolution with great contrast between different types of tissue, but provides only little possibilities to display metabolic processes and realize quantitative imaging. 
Additionally, it is common practice to speed up the data acquisition for MRI by undersampling the $k$-space, i.e. by measuring only a fraction of the data, which degrades the quality of the resulting reconstruction. 
However, having data from both modalities at hand, one can try to exploit the additional information in order to re-improve the results, in particular for PET, but as well for MRI.  
Since we cannot expect function to be independent of the underlying anatomy \cite{Ehrhardt:JointReconstruction}, it is reasonable to assume a similar structure in both the PET and the MR image, which motivates a structural joint reconstruction.

We briefly introduce the problem of PET and MR imaging and establish our notation, restricting ourselves to a setting in two dimensions.
For a more sophisticated introduction we refer the reader e.g. to \cite{Dawood:CorrectionTechniques, WernickAarsvold} for PET and \cite{Liang:PrinciplesOfMRI, Siemens:MagnetsSpinsResonances} for MRI. \\

PET makes use of the radioactive decay of an artificially created positron emitter, so-called tracer, applied to the patient's body. 
The emitted positrons annihilate with free electrons from the surrounding tissue, thereby generating two gamma photons which travel in (almost) opposite direction. 
The pair of gamma photons can be measured by a detector ring surrounding the patient. 
If two gamma photons impact on the detector surface at approximately the same time, they are likely to originate from the same decay. 
From the positions of the impacts we thus gain information about the approximate location of the decay, which must have taken place along the connecting line of response. 
The collectivity of all lines of response mathematically corresponds to a fan-beam-type sampling of the Radon transform (cf. e.g. \cite{Natterer:MathematicalMethods}) in two dimensions, i.e. line integrals over the spatial tracer distribution between two detector pairs. 

Hence let $\Omega \subset \R^2$ be an open and bounded image domain and $u \colon \Omega \to [0, \infty )$ denote the tracer distribution. 
The imaging process is usually modeled as a semi-discrete operator equation, mapping the continuous representation of $u \in L^1(\Omega)$ to the discrete data $f \in \R^M$ \cite{Sawatzky:TotalVariation, Sawatzky:EMTVMethods}.
We restrict ourselves to the fully discrete case, so the imaging operator is a matrix $A \colon \R^N \to \R^M$ mapping $u \in \R^N$ on a finite grid $\Omega_0$ to the discrete data $f$. 
We aim to solve the inverse problem 
\begin{align}
Au=f.
\end{align}
This is essentially an integral operator equation, and as the data always contains noise we are in need of a-priori information to stabilize the reconstruction process. 
In general, the underlying random process of tracer decay and photon detection as well as noise is assumed to be Poisson distributed. 
Thus the reconstruction of $u$ is commonly carried out as the maximum a-posteriori estimator for $u$, which leads to the minimization problem 
\begin{align}\label{eq:pet_recon}
u \in \arg \min_{u \geq 0} \left\{ \alpha \sum_{i=1}^M (Au)_i - f_i \log(Au)_i + \Reg(u) \right\}, 
\end{align}
where we used a Gibbs prior $p(u) = \exp (-\alpha^{-1} \Reg(u))$ for the a-priori density of $u$. 
By adding the term $-f + f \log(f)$ (we use the convention $0 \log (0):= 0$) independent of $u$ we gain the famous Kullback-Leibler divergence, which we denote by $\DKL(f,Au)$. 
Note that the value of $\DKL(f,Au)$ has to be set to infinity if $(Au)_i = 0$ for some $i \in \{1, \dots, M\}$.
Since $u$ represents a density, it is common practice to add a positivity constraint on $u$.
For problem \eqref{eq:pet_recon} to be well-posed it is usually necessary to add some additional technical assumptions on the imaging operator $A$.
We require the operator $A$ to preserve positivity, i.e. if $u \geq 0$ then necessarily $Au \geq 0$, and refer the reader to \cite[p. 79]{Sawatzky:EMTVMethods} for a deeper discussion of the topic.

The choice of the prior $\Reg$ now determines the type of a-priori knowledge we add to the problem. 
Later, we compare our joint reconstruction method to individual reconstructions using no prior (Expectation Maximization with early stopping), Total Variation regularization (TV, see e.g. \cite{ROF, Sawatzky:EMTVMethods,Bonettini2011,Harmany2012}), as well as to joint reconstruction techniques via Joint Total Variation (JTV \cite{Haber:ModelFusion}) and Parallel Level Sets (PLS \cite{Ehrhardt:JointReconstruction}). \\

MR imaging makes use of the nuclear spins of hydrogen nuclei inside the human body. 
By applying several magnetic fields to the patient, these spins can be specifically aligned such that one can measure differences in the resulting resonant frequencies. 
Mathematically, the recorded data sets correspond to measurements of the sought-for quantity in Fourier space \cite{Liang:PrinciplesOfMRI}.
For simplicity, we ignore the phase of the MR image and restrict our studies to absolute magnitude imaging \cite{Brown:BasicPrinciples}. 
Thus the MR image is a non-negative, real-valued image $v \colon \Omega \to [0,\infty)$. 
As for PET, we restrict ourselves to the discrete case for the modeling.
Hence, the imaging operator $B$ is essentially a discrete Fourier transform which maps the discrete $v \in \R^N$ to the Fourier measurements $g \in \C^L$ and we aim to solve
\begin{align}
Bv = g.
\end{align}
Assuming the noise in MRI to be Gaussian, we can seek for $v$ as the maximum a-posteriori estimator, i.e. 
\begin{align*}
v \in \arg \min_{v \geq 0} \Big\{ \frac{\beta}{2} \sum_{j=1}^L |(Bv)_j - g_j| ^2 + \Reg(v) \Big\}.
\end{align*}
We shall denote the data term by $\| \cdot \|_\C^2$. 
It is common practice to accelerate the MR data acquisition by measuring only a sparse set of Fourier coefficients. 
The missing information is then compensated for by suitable a-priori knowledge. 
The most popular approach is compressed sensing (\cite{Candes:Robust, Candes:Stable, Donoho:CompressedSensing, Lustig:Sparse}, where the main idea is to map the MR image to some function space where it has a sparse representation, and can thus be reconstructed from substantially reduced data. 
The undersampled frequencies are usually measured randomly or, for practical reasons, on a geometric pattern such as spirals or spokes, which causes artifacts in the reconstruction. 
As for PET, we compare the results of our joint reconstruction to already existing individual and joint reconstruction techniques.  
These are a zero-filled inverse Fourier transform (no prior), a TV-prior for sparsity in the gradient domain (TV), Joint Total Variation (JTV) as well as a joint PLS reconstruction (PLS). 


\subsection{Phantoms and data sets}
For the numerical studies we set up an artificial brain phantom (see Figure \ref{fig:phantoms}) with a size of $272$x$272$ pixels using data from BrainWeb \cite{cocosco:brainweb}, which features the typical challenges of a joint reconstruction.
\begin{figure}[ht]
 \center
 \begin{tabular}{ccccc}
  \includegraphics[width=0.25\textwidth]{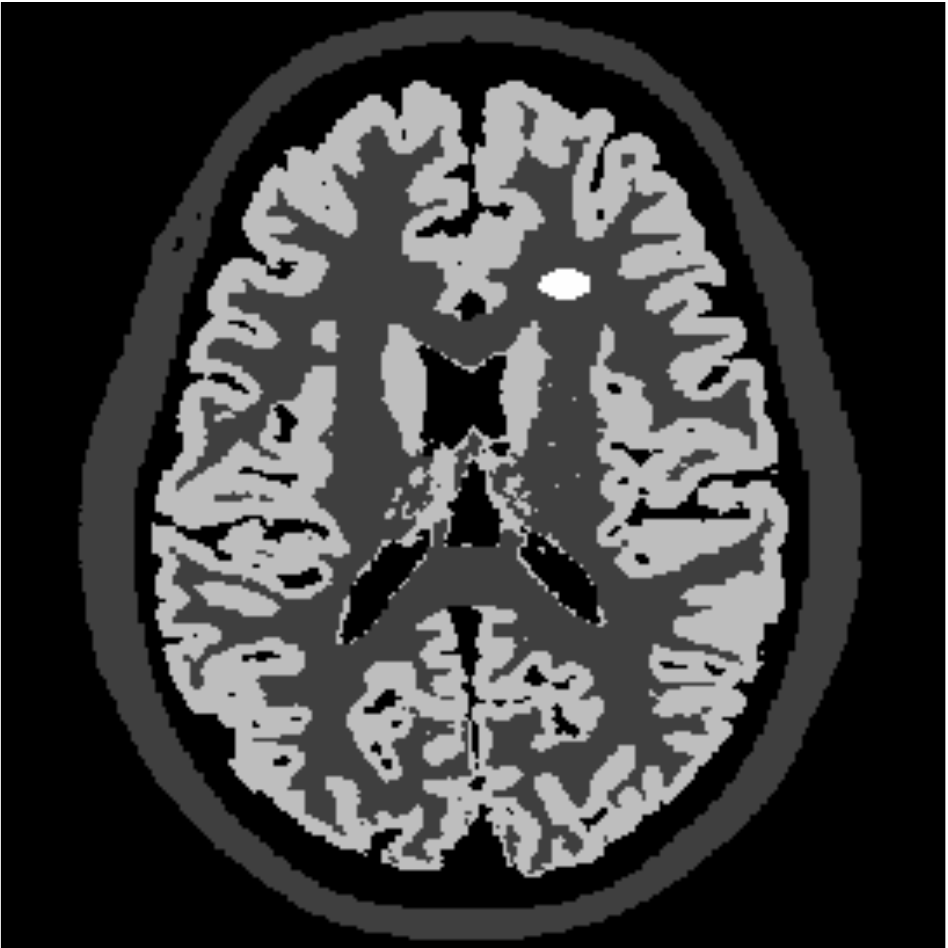} & \hspace{-1.3em} 
 \begin{overpic}[height=0.25\textwidth]{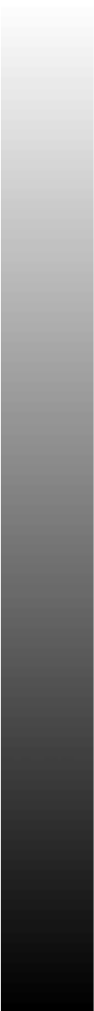}
  \put (0.75,91) {\small $10$} 
  \put (2.75,3) {\color{white}{$0$}}
 \end{overpic} & 
  \includegraphics[width=0.25\textwidth]{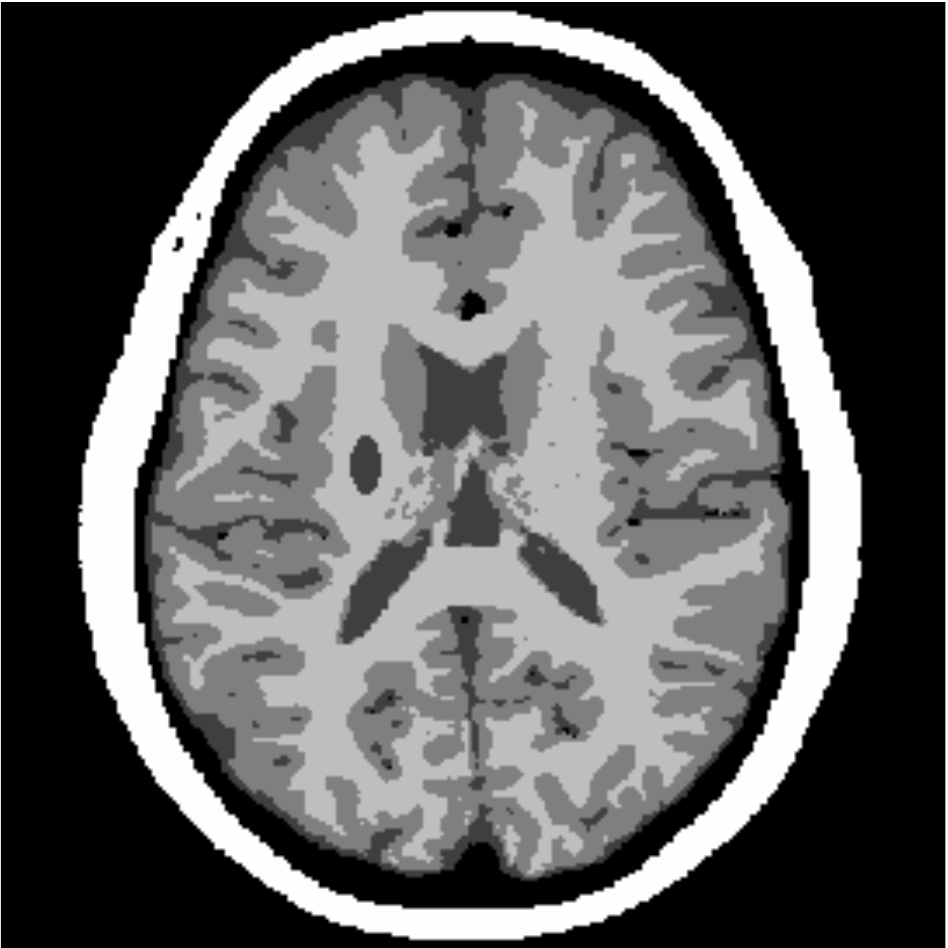} & \hspace{-1.3em} 
   \begin{overpic}[height=0.25\textwidth]{figure5a_cbar}
  \put (2.75,91) {\small $1$} 
  \put (2.75,3) {\color{white}{$0$}}
 \end{overpic} & 
  \includegraphics[width=0.25\textwidth]{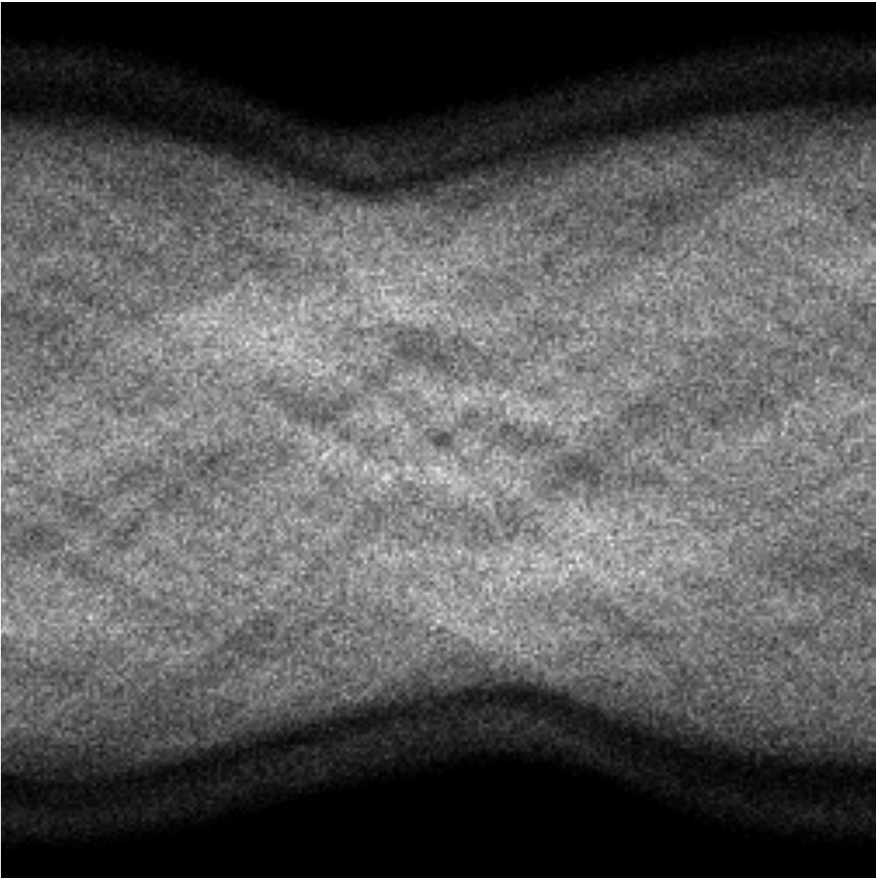} \\
  (a) PET phantom && (b) MRI phantom && (c) PET data
 \end{tabular}
 \caption{Phantoms and PET data.}
 \label{fig:phantoms}
\end{figure}
First of all, both phantoms feature the exact same location of edges everywhere except for the two artificially added lesions in the upper right part of the PET image and the left part of the MR image. 
Both have been added to illustrate a common joint reconstruction problem, namely the introduction of artifacts, i.e. features not present in one of the images that are transferred artificially to the other.  
While the locations of the edges are equal, the jumps across the edges are not, meaning that we e.g. have a ``step up'' from gray matter to white matter in the MR image, while we have a ``step down'' in the PET image. 
Furthermore, we point out the different range of image intensities. 
This imposes the issue of locally different gradient heights, which cannot be dealt with thoroughly by a global scaling of the data.
Summing up we find the following features: 
\begin{itemize}
 \item Equal edge locations (except for two lesions),
 \item Equal {\it and} different edge orientations,
 \item Different height of jumps across edges,
 \item Different scale.
\end{itemize}
The phantoms have been used to create artificial data sets for PET and MR imaging. 
The PET operator is taken from EMrecon \cite{koesters:EMrecon} and is modeled to match the geometry of one detector ring of the Siemens mMR scanner. 
It features 56 transversal detector blocks at 8 crystals each, and the gaps between detector blocks have been artificially filled by 56 virtual crystals. 
For further details we refer the reader to \cite{delso:performance}. 
In order to model the lack of resolution, positron range and scatter in PET, we employ a 2-dimensional Gaussian blur in the image domain with a full width at half maximum (FWHM) of $4$x$4$mm, where we assume that the pixel size in the image is $1$mm. 
The blurring kernel thus corresponds to a Gaussian with a standard deviation of $\sigma = 1.7$. 
The PET data is an instance of a Poisson distribution, where we simulated a total number of $1.5$ million counts. 
However, we point out that for artificial data the total number of counts is a rather poor criterion to judge a data set, since its quality rather depends on the distribution of those counts across the phantom. 

In case of MRI we use undersampled $k$-space data, meaning that we sample the Fourier space only at a few frequencies specified by different geometries. 
For the sake of simplicity, the MRI operator hence consists of a 2-dimensional Fourier transform followed by a projection onto the geometric pattern of the corresponding sampling (cf. \cite{Ehrhardt2016} . 
Note that we do not use a non-uniform Fourier transform since the chosen frequencies are still located on a Cartesian grid.
However, the idea and method do not change for non-Cartesian methods.
Eventually, Gaussian noise with an energy of approximately five percent of the total energy of the data set is added. 
We show four different types of undersampling in this work, which can be seen in Figure \ref{fig:samplings}.
The samplings introduce different types of artifacts and hence serve different purposes for the joint reconstruction setting which we elaborate on alongside with the results below.

\begin{figure}[ht!]
 \begin{center}
 \begin{tabular}{cccc}
  \includegraphics[width=0.22\textwidth]{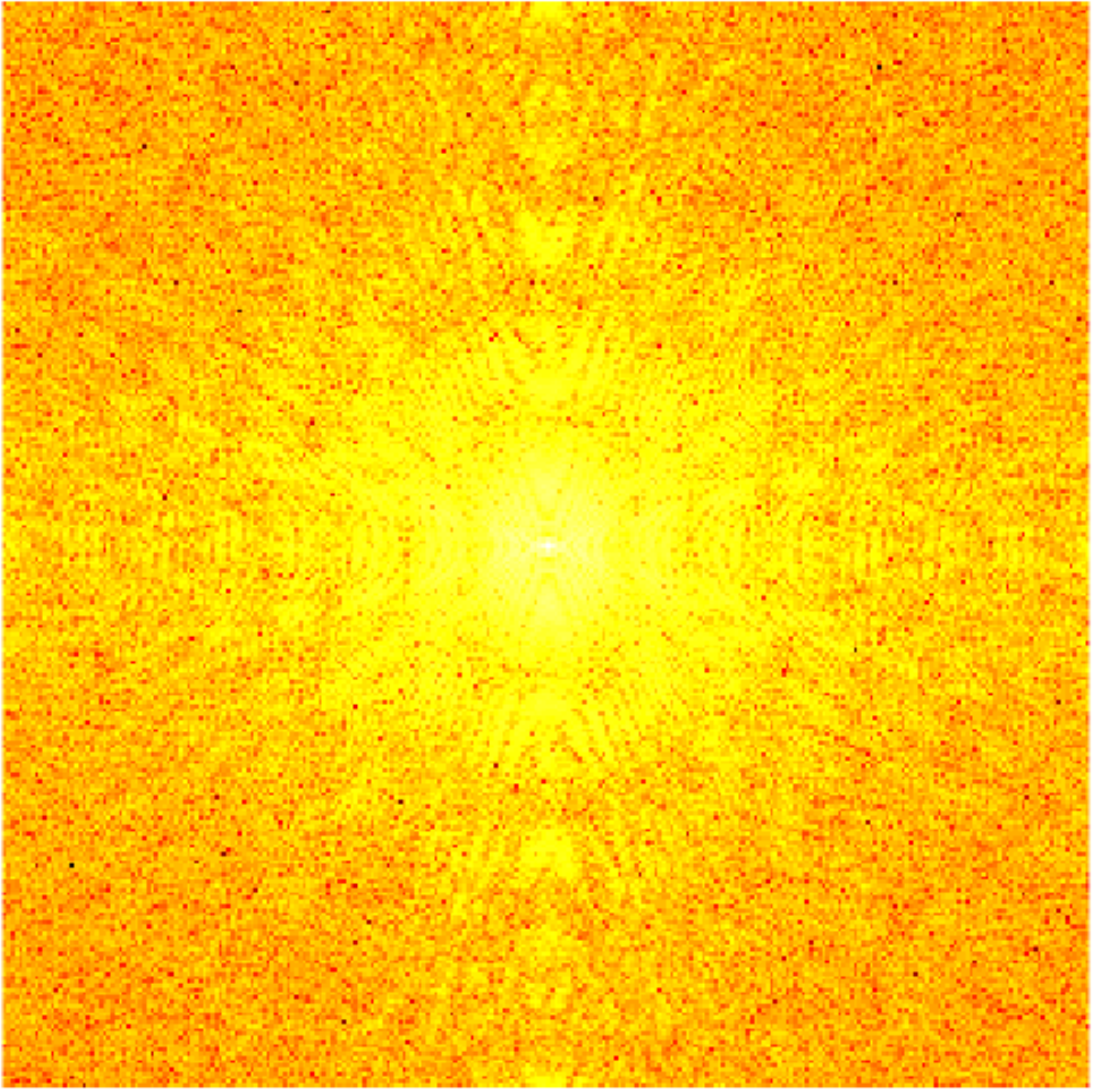} &  
  \includegraphics[width=0.22\textwidth]{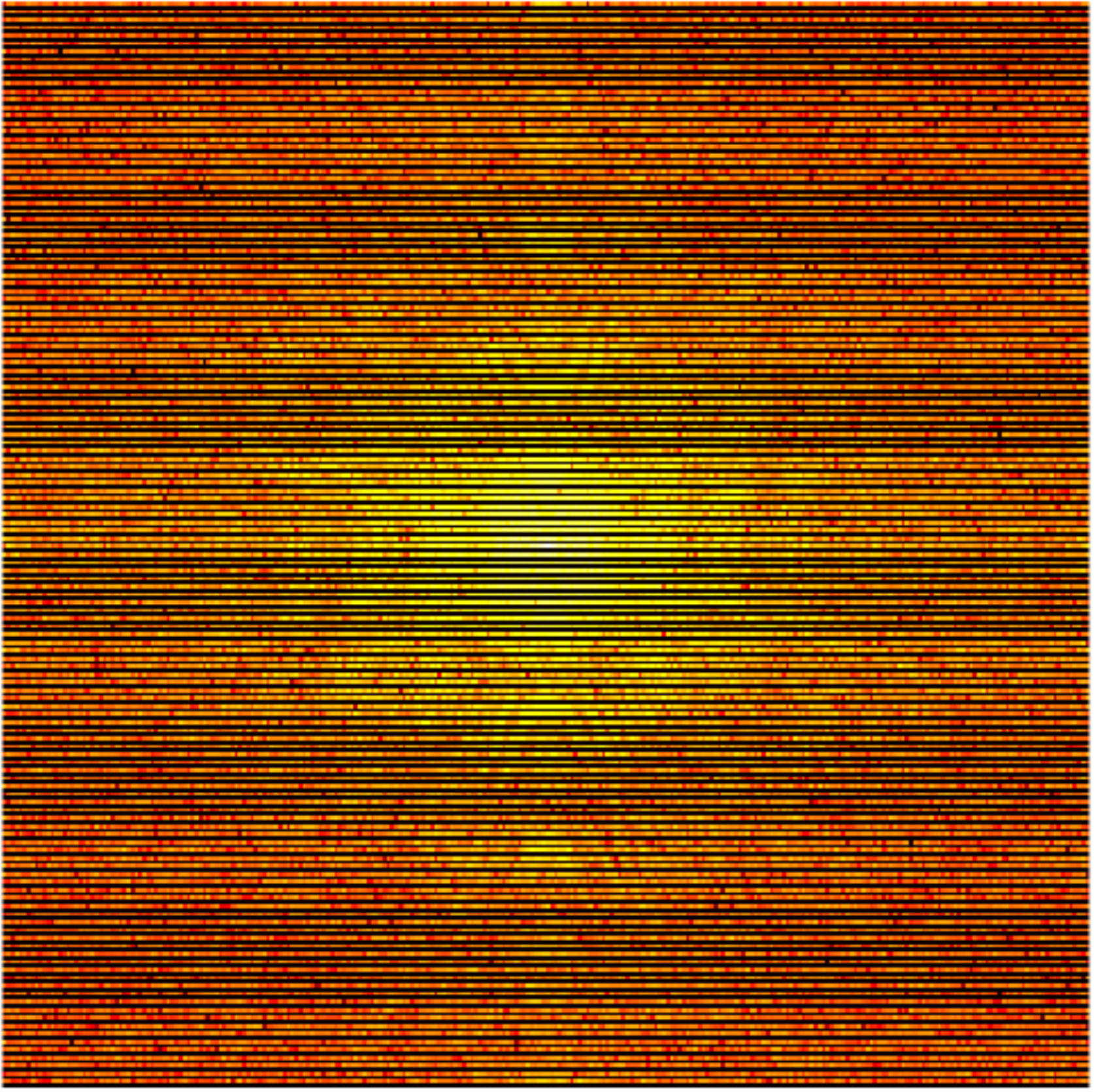} &  
  \includegraphics[width=0.22\textwidth]{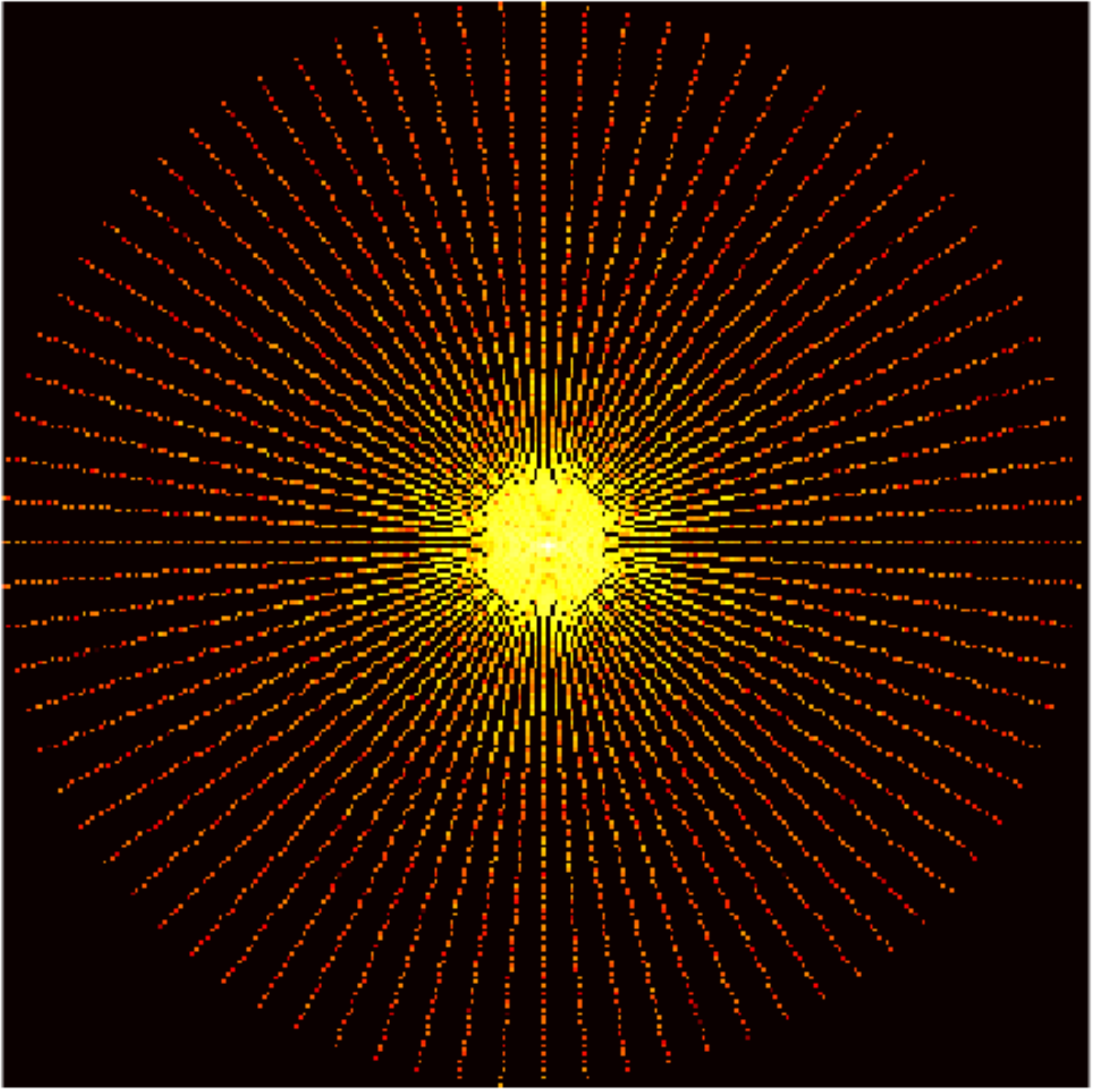} &
  \includegraphics[width=0.22\textwidth]{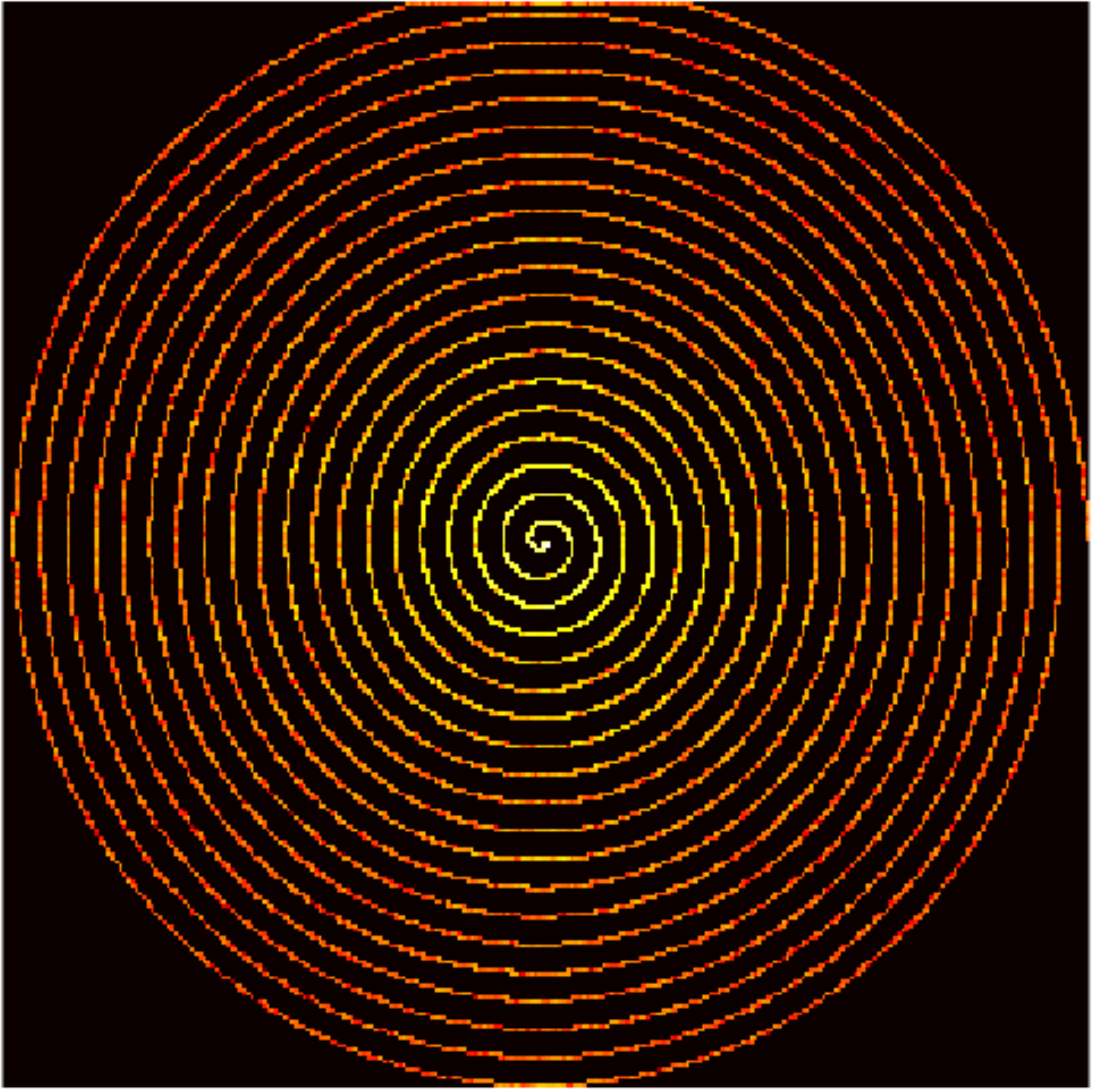} \\
  (a) Full: $100 \%$ & (b) Half: $10 \%$ & (c) Spokes: $50 \%$ & (d) Spiral: $13 \%$
 \end{tabular}
 \caption{Discrete sampling geometries for MRI with percentage of sampled Fourier coefficients: Full sampling (a), every second line (b), spokes through the $k$-space center (c) and a uniform spiral (d).}
  \label{fig:samplings}
 \end{center}
\end{figure}

\subsection{Methods}
We compare our method (ICB)
\begin{alignat}{5}
u^{k+1} \in \arg \min_{ u \geq 0} &\Big\{ \alpha \DKL(f,Au) && + \lambda D_{\TV}^{p_u^k}(u, u^k) && + (1-\lambda) \ICB^{p_v^k}(u,v^k) && \Big\}, \label{eq:PETMRIJointReconstruction1} \\
v^{k+1} \in \arg \min_{v \geq 0} &\Big\{ \beta \|Bv-g\|_\C^2 && + \mu D_{\TV}^{p_v^k}(v, v^k) && +  (1-\mu) \ICB^{p_u^k}(v,u^k) && \Big\}, \label{eq:PETMRIJointReconstruction2} 
\end{alignat}
to several reconstruction techniques:
separate reconstructions with no prior (Expectation Maximization \cite{Shepp:MaximumLikelihoodReconstruction} with early stopping for PET and zero-filled inverse FT for MRI) and a total variation prior, 
as well as joint reconstructions via joint TV (JTV \cite{Haber:ModelFusion}) and linear parallel level sets (LPLS \cite{Ehrhardt:JointReconstruction}).
Since decoupled Bregman iterations did not provide a substantial improvement over TV, we did not include these results. 
For JTV and LPLS we used the implementation provided by the authors of \cite{Ehrhardt:JointReconstruction}. 
We mention that due to the nonconvexity of the LPLS method we follow \cite{Ehrhardt:JointReconstruction} and use the result of JTV as the initialization for quadratic parallel level sets (QPLS \cite{Ehrhardt:JointReconstruction}), and the result of QPLS to initialize LPLS.
Since the QPLS reconstructions have always been inferior to LPLS we left them out of this paper as well.
Hence, for separate PET reconstructions we solve 
\begin{align}
 &\min_{u \geq 0} ~ \DKL(f,Au), \tag{no prior} \\
 &\min_{u \geq 0} ~ \alpha \DKL(f,Au) + \TV(u), \tag{TV}
\end{align}
and accordingly for MRI 
\begin{align}
 &\min_{u \geq 0} ~ \frac{1}{2} \|Bv -g \|_{\C}^2, \tag{no prior} \\
 &\min_{u \geq 0} ~ \frac{\beta}{2} \|Bv -g \|_{\C}^2 + \TV(u). \tag{TV}
\end{align}
The joint reconstruction problems read 
\begin{align}
 &\min_{u,v \geq 0} ~ \alpha \DKL(f,Au) + \frac{\beta}{2} \|Bv -g \|_{\C}^2 + \text{JTV}(u,v) \tag{JTV} \\ 
 &\min_{u,v \geq 0} ~ \alpha \DKL(f,Au) + \frac{\beta}{2} \|Bv -g \|_{\C}^2 + \text{LPLS}(u,v), \tag{LPLS} 
\end{align}
where 
\begin{align*}
 \text{JTV}(u,v) = \int_{\Omega} \sqrt{|\nabla u|^2 + |\nabla v|^2} \dx, \qquad 
 \text{LPLS}(u,v) = \int_{\Omega} |\nabla u||\nabla v| - | \nabla u \cdot \nabla v | \dx. 
\end{align*}

All parameters were tuned to maximize the self-similarity index (SSIM \cite{WangSSIM}) between the reconstructions and the ground truth over the region of interest, where for joint reconstruction methods we maximized the `joint' SSIM, i.e. the sum of both SSIM values.
For all joint reconstruction techniques, the ratio between the parameters $\alpha$ and $\beta$ has been chosen such that the size of the data terms is approximately equal.  
This guarantees an equal regularization for both the PET and the MR image, which corresponds to a uniform evolution of both the PET and MR image for ICB in terms of regularity during the Bregman iterations. 
We mention that there is potential further improvement by tweaking this ratio for data of different quality.
The Bregman iterations for ICB have been stopped when noise reappeared in the images. 
Starting with high regularization, we performed 50 Bregman iterations on average.
With the numerical implementation outlined in Section \ref{sec:numerics}, $tol_{gap} = 1e-4$ and $tol_{constraint} = 1e-5$, every Bregman iteration took approximately three minutes on a standard laptop, which amounts to a total computation time of approximately three hours.
We however remark that it is possible to reduce both the amount of Bregman iterations and the tolerances without substantially reducing the quality of the results.
The SSIM values can be found in Table \ref{tab:SSIM} alongside with the weighting between channels for ICB.
\begin{table}[t!]
\center
\begin{tabular}{rcccccccc}
 & \multicolumn {2}{c}{{\bf Full}} & \multicolumn {2}{c}{ {\bf Half} } & \multicolumn {2}{c}{ {\bf Spokes} } & \multicolumn {2}{c}{ {\bf Spiral} } \\
 \hline \hline
 & PET & MRI & PET & MRI & PET & MRI & PET & MRI \\
 \hline
 no prior & $0.5366$ & $0.9940$ & $0.5366$ & $0.6722$ & $0.5366$ & $0.7804$ & $0.5366$ & $0.5283$ \\
 TV       & $0.5657$ & $0.9942$ & $0.5657$ & $0.6718$ & $0.5657$ & $0.9415$ & $0.5657$ & $0.7807$ \\
 \hline
 JTV      & $0.6806$ & $0.9944$ & $0.6538$ & $0.8194$ & $0.6340$ & $0.8977$ & $0.6540$ & $\mathbf{0.9101}$ \\
 LPLS     & $0.7837$ & $\mathbf{0.9949}$ & $0.6812$ & $0.8322$ & $0.6485$ & $0.8541$ & $0.6485$ & $0.8537$ \\
 \rowcolor[gray]{.8}
 \textbf{ICB}  & $\mathbf{0.9218}$ & $\mathbf{0.9999}$ & $\mathbf{0.8025}$ & $\mathbf{0.8969}$ & $\mathbf{0.7695}$ & $\mathbf{0.9453}$ & $\mathbf{0.7017}$ & $0.8240$\\
 \hline \hline
 $\lambda$ / $\mu$ & $0.1$ & $1$ & $0.3$ & $0.8$ & $0.1$ & $1$ & $0.2$ & $0.6$
\end{tabular}
\caption{SSIM values for single and joint reconstruction results for different samplings. 
The parameters $\lambda$ and $\mu$ are the weighting of the own channel during the joint reconstruction with ICB. 
E.g. for 'half', the influence of MRI during the PET reconstruction was 70\%, the influence of PET on the MRI reconstruction was 20\%.}
\label{tab:SSIM}
\end{table}

\subsection{Results}

Figures \ref{fig:res_full} to \ref{fig:res_spiral} show the results for four different MRI samplings and different types of regularization. 
The left two columns display the PET and MRI reconstructions, where all pictures are put onto the original scale of the ground truth, i.e. $[0,10]$ for PET and $[0,1]$ for MRI. 
If the reconstructions overestimate the image values beyond that scale, the corresponding pixels are set to the maximum value (note that an underestimation below zero is impossible due to the positivity constraint).
The effective over- or underestimation of image values with respect to the ground truth can be assessed from the difference images on the right-hand side of the figures.
Note that due to the missing weighting between gradients for JTV and LPLS, we had to rescale the PET data by a factor of $10$ in order to approximately provide the same range of image intensities for both PET and MRI. 
The results have then eventually been rescaled to their original range. 
We mention that due to the nonlinearity of the methods this may result in a slight change of quantitative values.

\begin{figure}[t!]
\center
 \includegraphics[width=0.95\textwidth]{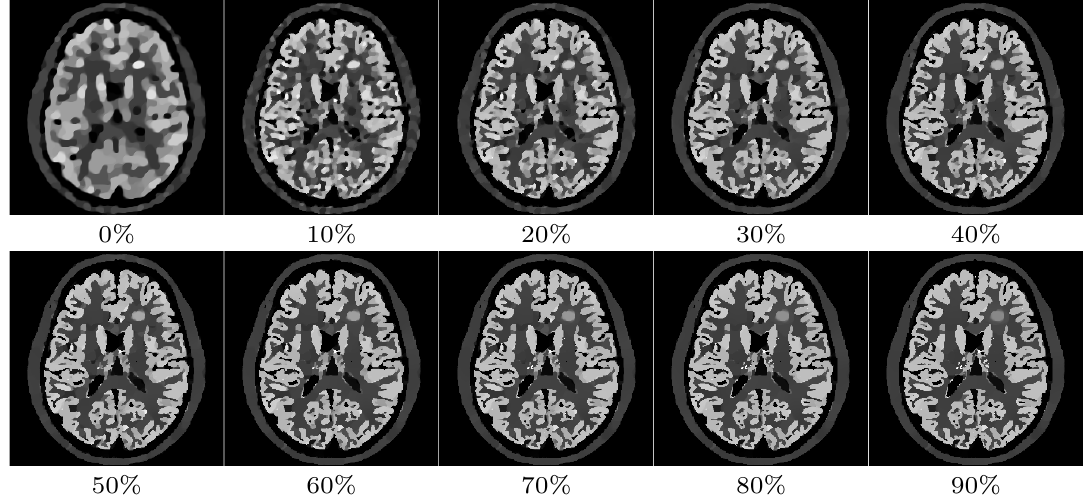}
 \caption{PET channel weighting for full MRI data: PET reconstruction for increasing amount of influence of the MR channel from 
 no influence (upper left) to 90 percent influence (lower right).}
 \label{fig:weighting_smp1}
\end{figure}

\subsubsection{Full sampling}
For a full sampling, already a separate reconstruction provides a visually perfect MR image, which remains the case for all priors.
Here, the weighting for ICB is chosen such that the PET image does not influence the reconstruction of the MR image ($\mu = 1$) and the procedure is similar to a PET reconstruction with an (evolving) anatomical prior.
The MR reconstruction performed in parallel hence corresponds to a single channel BTV reconstruction.
The quality of the PET image varies greatly. 
Without any prior, the PET image shows the typical noisy and blurry appearance of reconstructions from noisy Poisson data. 
The TV prior removes the noise, but highly oversmoothes the image. 
The first joint reconstruction method, JTV, is able to transfer some part of the sharp structures of the MR image to the PET image and increases its quality. 
However, the result remains too smooth. 
LPLS and ICB both show a substantial improvement of the image quality in terms of sharp edges and noise reduction. 
However, LPLS still features some remaining noise, and the lesion in the MRI not present in PET has been partly transferred. 
The result for ICB seems visually perfect on the shared structures of the image which can as well be assessed from the difference image.
The only drawback is the smoothing of the hot lesion only present in the PET image, while we however observe that the MRI lesion has not been transferred.
The observations are as well confirmed by the SSIM values in Table \ref{tab:SSIM}.

The advantage of ICB over LPLS is easily explained by the weighting between channels. 
While LPLS always relies on a 50/50 weighting, the result for ICB has been obtained with an MR influence of $90 \%$. 
For further comparison to a 50/50 weighting (and the overall influence of the channel weighting) we provide a weighting series in Figure \ref{fig:weighting_smp1}.

\subsubsection{Half sampling}
Sampling every second line of the k-space introduces a ghosting artifact into the MR image. 
JTV and LPLS show a similar performance in removing some parts of these artifacts from the MRI, 
while LPLS as well shows a clear improvement of the PET image.
However, both lesions are again artificially shared between the images. 
The ICB results contain the least artifacts for MRI and the highest improvement for the PET image, 
where however again the hot lesion is attenuated. 
We as well find a slight transfer of the lesion from the MR image into the PET image, whereas the lesion from PET is not transferred to the MR image.

\subsubsection{Spokes sampling}
The situation for a spokes sampling is almost identical to a full MR sampling. 
Since already a TV prior removes the resulting grain artifacts, we again choose the weighting for ICB such that the PET image does not influence the MR image ($\mu = 1$). 
Interestingly, JTV and LPLS decrease the quality of the MR image, both visually and in terms of SSIM, by re-introducing some part of the noise. 
However, the PET image still benefits significantly from the influence of the MRI. 
ICB again shows the best performance for PET, sharpening the edges and restoring the quantitative values, where we again observe the attenuation of the hot lesion and transfer of the MR lesion. 

\begin{figure}[t!]
\center
 \includegraphics[width=0.95\textwidth]{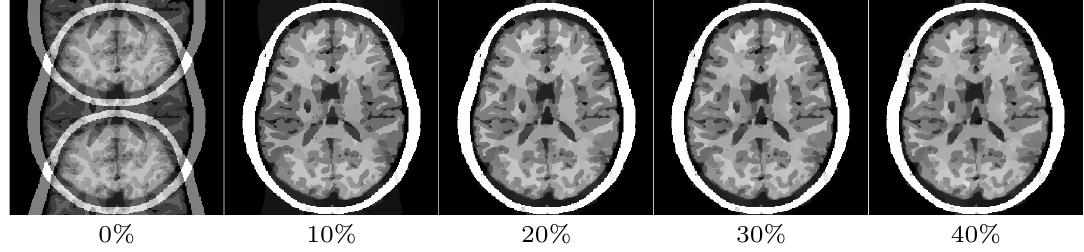}
 \caption{MRI channel weighting: MRI reconstruction for increasing amount of influence of the PET channel from 
 no influence (left) to 40 percent influence (right).}
 \label{fig:weighting_smp2}
\end{figure}

\subsubsection{Spiral sampling}
For a spiral sampling, we again expect a mutual benefit for both modalities, since the sampling introduces some circular artifacts. 
Surprisingly, JTV delivers the best result in terms of SSIM, which however look too smooth. 
Visually the results for LPLS convince the most, both for PET and MRI. 
The ICB result for PET is sharper and more accurate in terms of quantitative values, but appears a little too noisy, which can as well be caused by the influence of the undersampled MR image.

\subsection{The benefit of channel weighting}
We further comment on the channel weighting, i.e. the parameters $\lambda$ and $\mu$. 
As already mentioned in the previous section, a known drawback of joint reconstruction methods is a cross-talk between the different channels, i.e. 
channels might cause artifacts in other channels. 
This can e.g. be the transfer of a structure from one channel to the other
(cf. the PET image in Fig. \ref{fig:res_spokes}), or the smoothing out of structures since they are
not present in all channels (cf. the lesion in the PET image in Fig. \ref{fig:res_full}).
This is an issue which is hard to avoid. 
However, the weighting between channels, i.e. the parameters $\lambda$ and $\mu$ can indicate, which structures 
are likely to be artifacts or falsely influenced by the joint reconstruction. 
Figure \ref{fig:weighting_smp1} shows the PET reconstruction with an increasing amount of influence of the MR channel. 
In comparison to a seperate reconstruction (upper left), already an influence of 10 percent of the MR image substantially sharpens
the PET image. 
This trend continues as the influence further increases. 
Unfortunately, the hot lesion in the upper right part of the PET image starts to be smoothed out with increasing MR weighting. 
This error, however, can be identified by a look at the seperate reconstruction and low MR weightings. 

This of course works in both ways. 
Figure \ref{fig:weighting_smp2} shows the MR image for the `half' sampling under increasing weighting of the PET image. 
Already an influence of 10 percent substantially reduces the ghosting artifacts, which however cannot be suppressed entirely 
even for higher weighting. 

Hence, a series of reconstructions with different channel weightings can help identify artifacts caused by the joint reconstruction.

\begin{figure}[t]
 \center
 \includegraphics[width=0.8\textwidth]{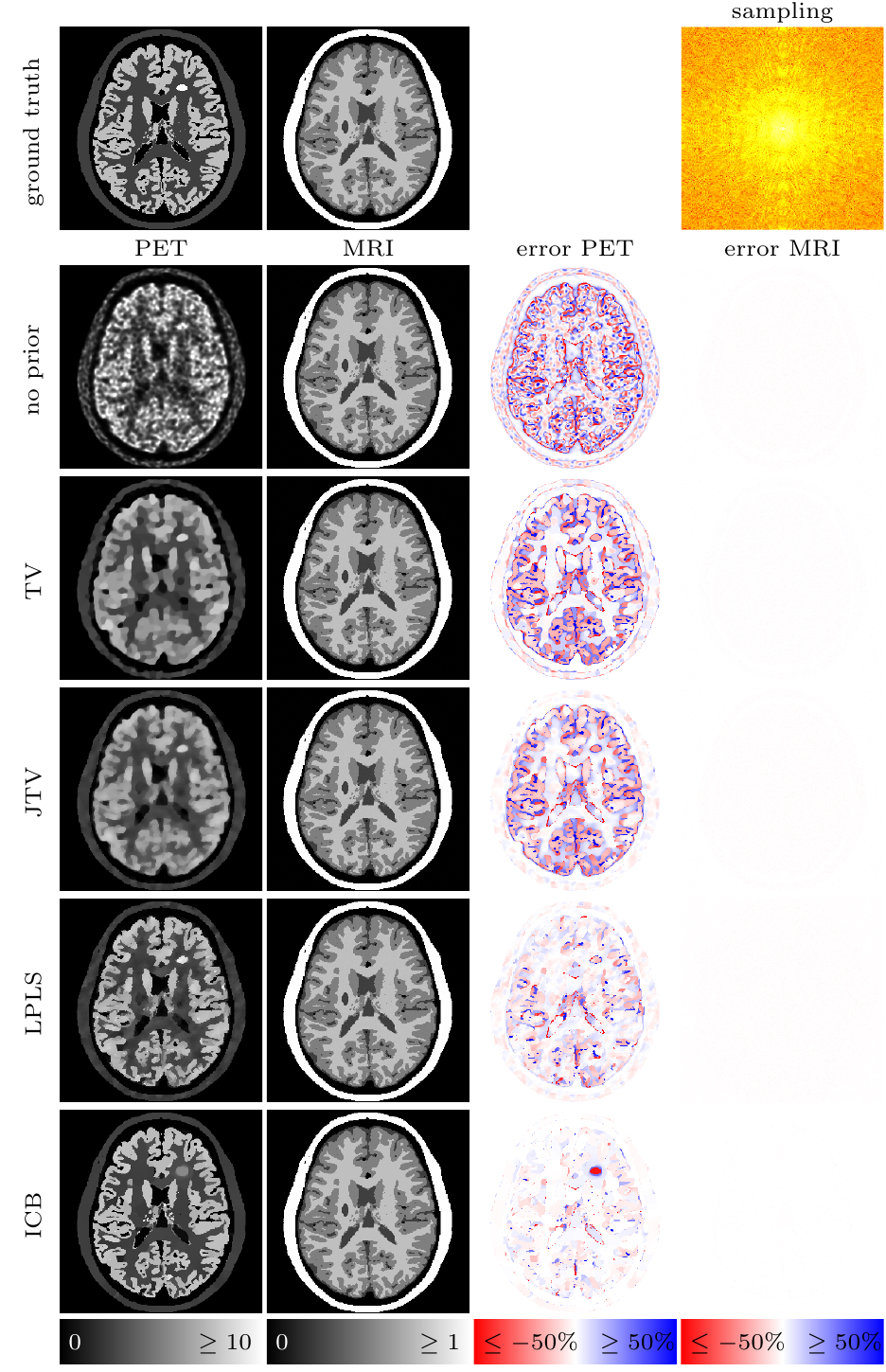}
 \caption{Reconstruction results for a full sampling of the MRI and different priors. 
 Left-hand side: reconstructions. Right-hand side: difference to the ground truth.}
 \label{fig:res_full}
\end{figure}

\begin{figure}[ht] 
 \center
 \includegraphics[width=0.8\textwidth]{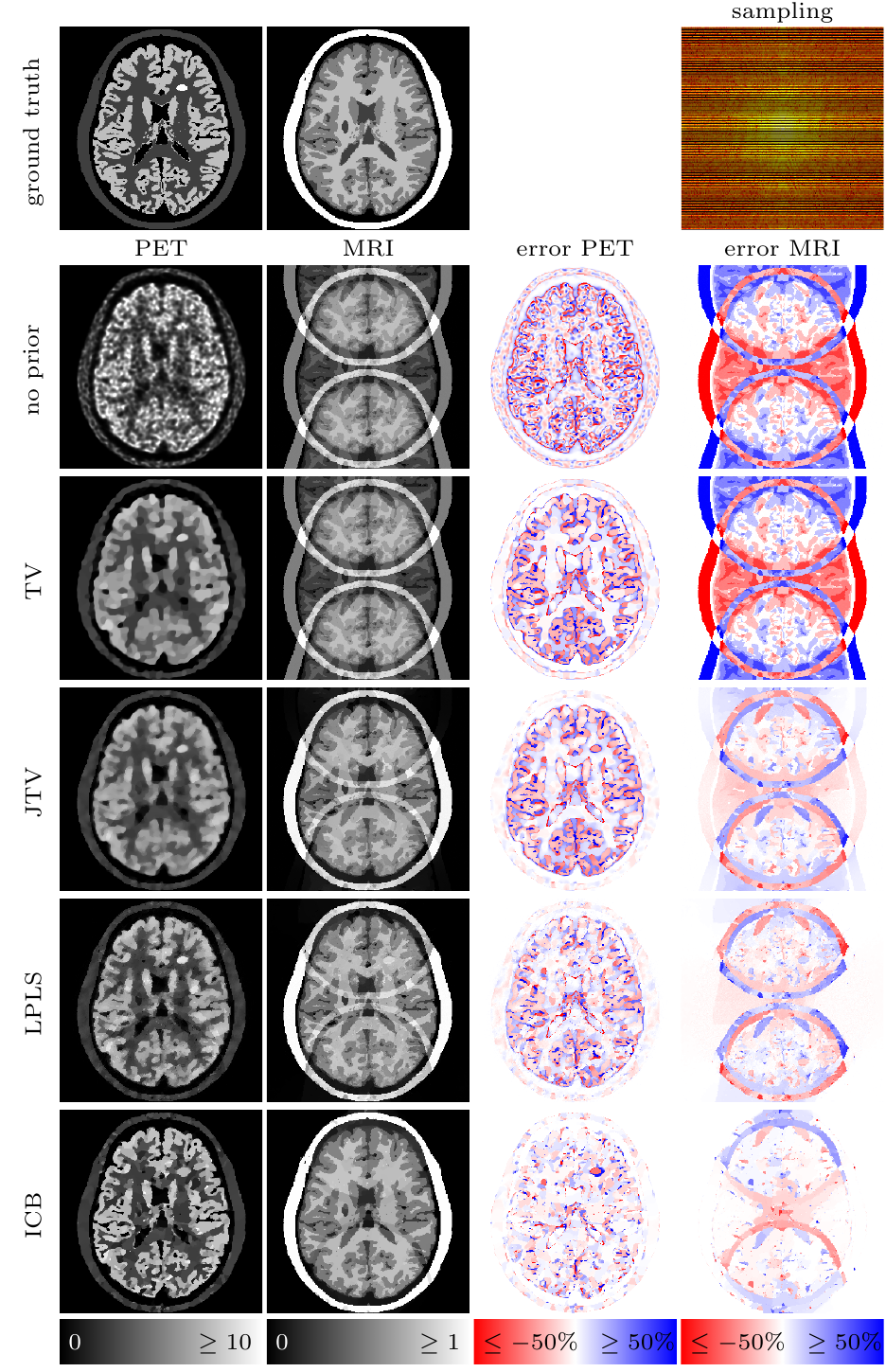}
 \caption{Reconstruction results for sampling every second line and different priors. 
 Left-hand side: reconstructions. Right-hand side: difference to the ground truth.}
 \label{fig:res_half}
\end{figure}

\begin{figure}[ht]  
 \center
 \includegraphics[width=0.8\textwidth]{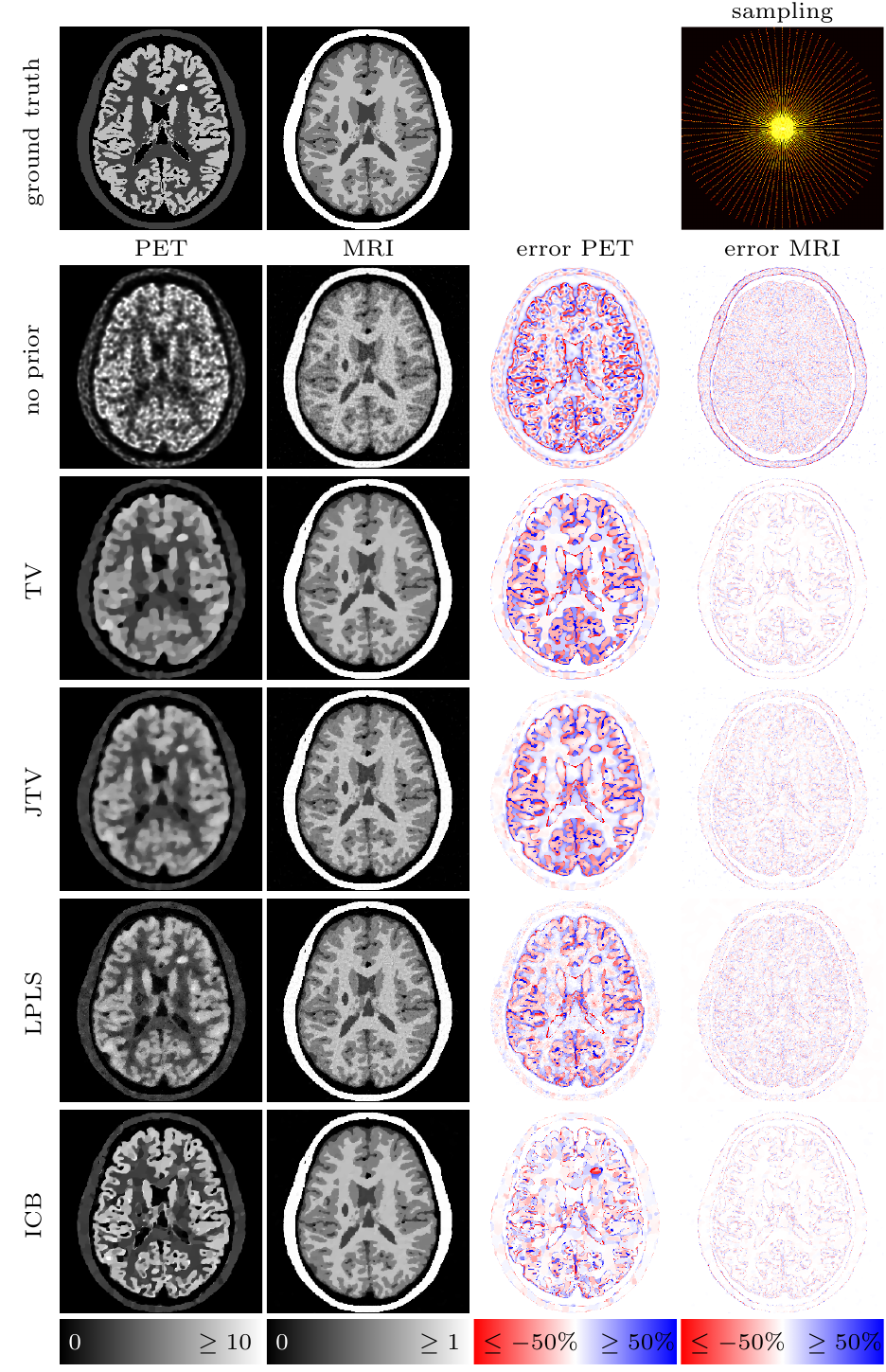}
 \caption{Reconstruction results for a spokes sampling of the MRI and different priors. 
 Left-hand side: reconstructions. Right-hand side: difference to the ground truth.}
 \label{fig:res_spokes}
\end{figure}

\begin{figure}[ht] 
 \center
 \includegraphics[width=0.8\textwidth]{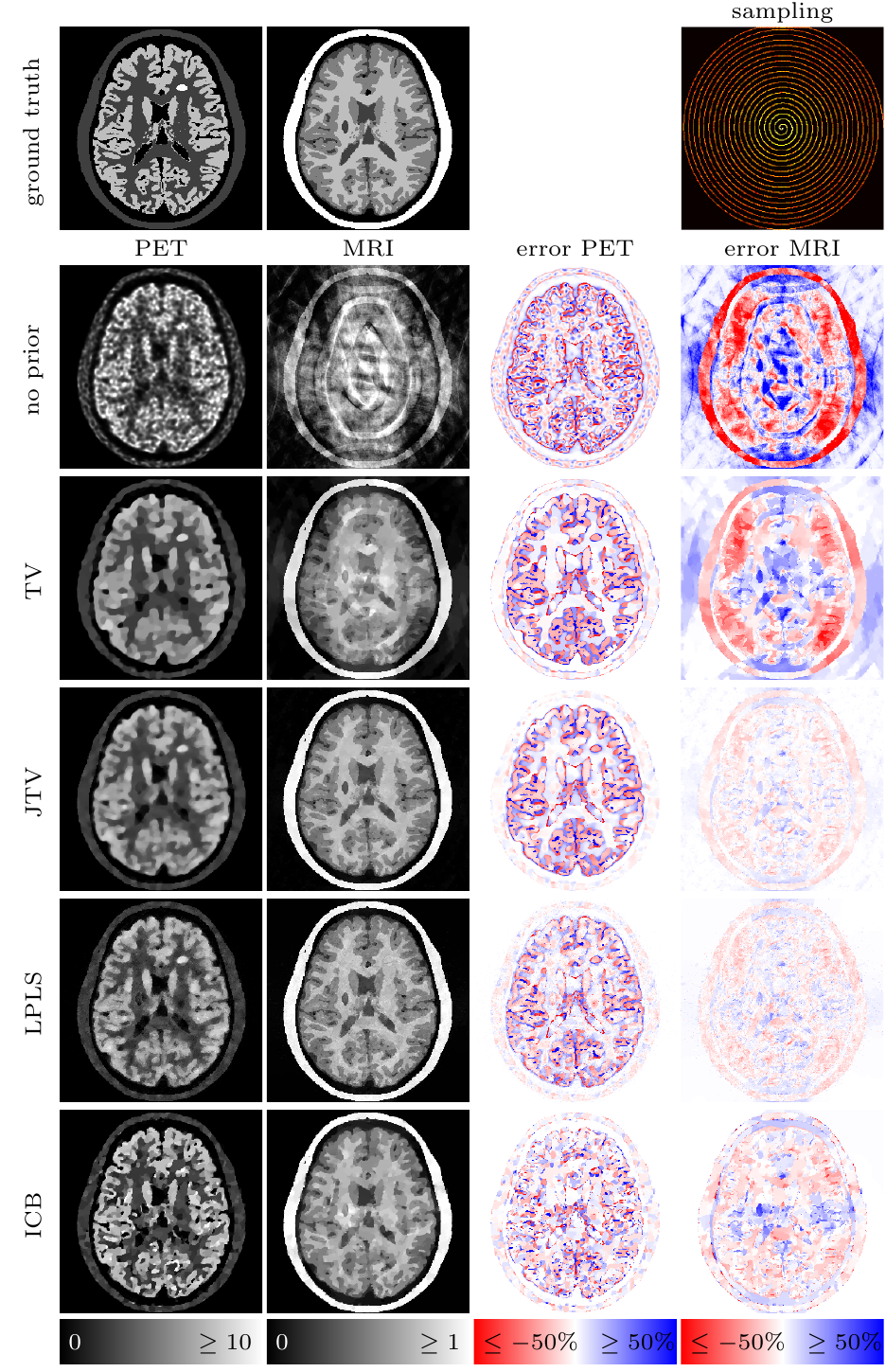}
 \caption{Reconstruction results for a spiral sampling of the MRI and different priors. 
 Left-hand side: reconstructions. Right-hand side: difference to the ground truth.}
 \label{fig:res_spiral}
\end{figure}

\section{Conclusion}\label{sec:conclusion}

We extended the ideas of Moeller et al. \cite{Moeller:ColorBregmanTV} for color image processing to a more general joint reconstruction setting, 
involving data and hence images of totally different types.
The presented method is able to exploit structural similarities between images during the reconstruction to obtain significant improvements even for highly degraded data. 
We gave insights and justification for the method in a rigorous, theoretical setting and extended the idea from a discrete vector space setting to the space of finite Radon measures.  
 
Using PET-MR imaging as a particular numerical example, we demonstrated that both modalities can substantially benefit from a joint reconstruction. 
We furthermore compared the method to already existing joint reconstruction methods and showed a similar or in many cases superior performance. 

There are a few questions remaining open for the future. 
First of all, a current subject of investigation is the choice of subgradients, which still leaves room for improvements. 
This could in particular solve the missing existence proof for the infimal convolution method. 
Regarding applications, an investigation of the method's performance on real data is certainly interesting, 
as well as the transfer to other fields such as functional MRI, which is subject of current research.
Last but not least, we are investigating other numerical approaches to reduce the computational effort for the solution of the method.

\begin{appendix} 
\section{Appendix}
\subsection{Proof of Theorem \ref{thm:inf_conv}}
\begin{mythm}
Let $\nu,\eta \in \M$ such that $\mathcal{C}(\nu) := \partial \|\nu\nM \cap \CO \neq \emptyset$.  
Let $q \in \mathcal{C}(\nu)$. Then
\begin{align*}
\left[D_{\| \cdot \nM}^q(\cdot,\nu) \Box D_{\| \cdot \nM}^{-q}(\cdot,-\nu)\right](\eta) 
= \int_\Omega G(f_\eta,q) \di |\eta| , 
\end{align*}
where 
\begin{align}
G(f_\eta,q) = \begin{cases}
	       1 - |\cos(\varphi)| |q|, & \text{ if } |q| < |\cos(\varphi)|,\\
	       | \sin(\varphi)| \sqrt{1 - |q|^2}, &\text{ if } |q| \geq |\cos(\varphi)|
	       \end{cases}
\end{align}
$|\eta|$-a.e.,
 and $\varphi$ denotes the angle between $f_\eta$ and $q$, i.e. 
 $\cos(\varphi) |f_\eta| |q| = f_\eta \cdot q$, 
 with $\varphi(x) := 0$ if $q(x) = 0$.
\end{mythm}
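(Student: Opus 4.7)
My approach is to reduce the infimum over measure decompositions $\eta = \phi + \psi$ to a pointwise vector-minimization in $\R^m$ and then invoke the pointwise identity established in \cite{2016debiasing}.

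First, since $q \in \mathcal{C}(\nu)$ gives $\langle q,\nu\rangle = \|\nu\nM$ by Theorem \ref{thm:subdiff}, the two Bregman distances simplify to $D_{\|\cdot\nM}^q(\phi,\nu) = \|\phi\nM - \langle q,\phi\rangle$ and $D_{\|\cdot\nM}^{-q}(\psi,-\nu) = \|\psi\nM + \langle q,\psi\rangle$, so that
\[
\ICBM^q(\eta,\nu) \;=\; \inf_{\phi+\psi=\eta}\Big(\|\phi\nM + \|\psi\nM - \langle q, \phi - \psi\rangle\Big).
\]
For any admissible pair $(\phi,\psi)$, I would introduce the reference measure $\mu := |\phi|+|\psi|$ and Radon--Nikod\'ym densities $\alpha,\beta:\Omega\to\R^m$ with $\phi = \alpha\mu$, $\psi = \beta\mu$, $|\alpha|+|\beta|=1$ $\mu$-a.e., so that the cost becomes $\int_\Omega (|\alpha|+|\beta|-q\cdot(\alpha-\beta))\di\mu$ under the constraint that $h := \alpha+\beta$ equals the density of $\eta$ with respect to $\mu$.

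Second, define the pointwise value $F(c,q) := \inf_{a+b=c}\{|a|+|b|-q\cdot(a-b)\}$ for $c\in\R^m$ and $|q|\le 1$. One checks directly that $F(0,q)=0$ and that $F$ is positively one-homogeneous in $c$, whence
\[
\int_\Omega F(h,q)\di\mu \;=\; \int_{\{h\neq 0\}} |h|\, F(h/|h|,q)\di\mu \;=\; \int_\Omega F(f_\eta,q)\di|\eta|,
\]
a quantity \emph{independent of the decomposition}. Bounding the integrand below by $F(h,q)$ pointwise yields $\ICBM^q(\eta,\nu)\ge\int F(f_\eta,q)\di|\eta|$. For the reverse inequality I would take a Borel-measurable pointwise minimizer $(a^*(x),b^*(x))$ of $F(f_\eta(x),q(x))$ and set $\phi := a^*|\eta|$, $\psi := b^*|\eta|$: since $a^*+b^*=f_\eta$, this is admissible and attains the bound. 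Measurability of the selection is inherited from the continuity of $q$ and the explicit piecewise form of the minimizer.

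Third, it remains to verify $F(f_\eta,q)=G(f_\eta,q)$ when $|f_\eta|=1$. Substituting $b=f_\eta-a$ rewrites $F(f_\eta,q) = \inf_{a\in\R^m}\{|a|+|f_\eta-a|-2q\cdot a\} + q\cdot f_\eta$; decomposing $a$ into components parallel and orthogonal to $f_\eta$ reduces this to a two-parameter convex problem whose KKT analysis splits exactly along $|q|<|\cos\varphi|$ versus $|q|\ge|\cos\varphi|$, producing formula \eqref{eq:IC_integrand}. This is the pointwise case proved in \cite{2016debiasing}, so I would cite it rather than redo the case analysis. The main technical obstacle in the whole argument is the Borel-measurable selection step in Step~2; everything else is algebraic (one-homogeneity scaling) or an explicit convex minimization.
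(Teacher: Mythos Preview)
Your lower-bound argument via one-homogeneity of $F(c,q)=\inf_{a+b=c}\{|a|+|b|-q\cdot(a-b)\}$ is correct and is in fact more direct than the paper's route, which obtains the lower bound from the biconjugate identity $(f_1\Box f_2)\ge(f_1^*+f_2^*)^*$ and then restricts the dual variable to $\CO$ to carry out a pointwise KKT analysis. Your reduction to $\int_\Omega F(f_\eta,q)\di|\eta|$ avoids the dual-space machinery entirely.

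The genuine gap is in your upper bound. You assert the existence of a Borel-measurable pointwise \emph{minimizer} $(a^*,b^*)$ and set $\phi=a^*|\eta|$, $\psi=b^*|\eta|$. Two things fail here. First, on the set $\{|q|=1\}$ the infimum $F(f_\eta,q)=0$ is \emph{not attained} in general (unless $f_\eta\parallel q$): with $a=\tfrac{f_\eta}{2}-\tfrac{c}{2}q$ one gets cost $O(c^{-1})$, but no finite $a$ achieves zero. This set is exactly the support of $|\nu|$ and may well have positive $|\eta|$-measure. Second, even where $|q|<1$ and the minimizer exists, the explicit formula from the KKT analysis gives $|a^*|$ of order $(1-|q|^2)^{-1/2}$, which need not lie in $L^1_{|\eta|}$; so $\phi=a^*|\eta|$ may fail to be a finite Radon measure. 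The paper deals with both issues by first introducing an $\varepsilon$-approximate choice on $\{|q|=1\}$ and then truncating $g$ at level $\alpha$, obtaining $\int G\di|\eta|+\varepsilon$ in the limit $\alpha\to\infty$ and concluding since $\varepsilon$ was arbitrary. Your argument needs an analogous approximation-and-truncation step; a bare measurable-selection claim does not suffice.
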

\begin{proof}
We can adapt the idea of the proof in \cite{2016debiasing}. Let 
\begin{alignat*}{2}
&f_1(\eta) = D_{\| \cdot \nM}^q(\eta,\nu) &&= \|\eta \nM- \langle q,\eta \rangle, \\
&f_2(\eta) = D_{\| \cdot \nM}^{-q}(\eta,-\nu) &&= \|\eta \nM + \langle q, \eta \rangle.
\end{alignat*}
It follows from $(f_1 \Box f_2)^* = f_1^* + f_2^*$ and the definition of the biconjugate that
\begin{align*}
 f_1 \Box f_2 \geq (f_1^* + f_2^*)^*.
\end{align*}
We start with computing the right-hand side and find $f_1^*(w) = \iota_{\mathcal{M}^*}(w + q)$ and $f_2^*(w) = \iota_{\mathcal{M}^*}(w -q)$,
where $\iota_{\mathcal{M}^*}$ denotes the characteristic function of the dual ball
\begin{align*}
 B = \big\{ w \in \M^* ~|~ \| w \|_{\mathcal{M}^*} \leq 1 \big\}.
\end{align*}
Since $\CO \subset \M^*$, we deduce
\begin{alignat*}{3}
&&(f_1^* + f_2^*)^*(\eta) &= \sup_{w \in \M^*} \langle \eta, w \rangle  &&  \text{ s.t.} ~ \|w+q\|_{\mathcal{M}^*} \leq 1, \|w-q\|_{\mathcal{M}^*} \leq 1 \\
&&&\geq \sup_{w \in \CO } \int_\Omega w \cdot f_\eta \di |\eta| \quad &&\text{ s.t.} ~ \|w+q\|_\infty \leq 1, \|w-q\|_\infty \leq 1. 
\end{alignat*}
Due to the constraints, we can carry out the computation pointwise (respectively $|\eta|$-a.e.). 
We first address the case $|q| = 1$, which immediately implies that $w = 0$ and $w \cdot f_\eta = 0$.
Hence we can from now on assume that $|q| < 1$ and set up the corresponding Lagrangian for $|\eta|$-a.e. $x \in \Omega$ 
\begin{align}
\mathcal{L}(w(x), \lambda_1, \lambda_2) = - w(x) \cdot f_\eta(x) + \lambda_1 (|w(x) - q(x)|^2 -1) + \lambda_2 (|w(x) + q(x)|^2 -1),
\label{eq:Lagrange}
\end{align}
where we leave out the dependence on $x \in \Omega$ for simplicity.
Note that $w,q,f_\eta \in \R^m$ and that we \textit{minimize} the Lagrangian here. 
Every optimal point of (\ref{eq:Lagrange}) has to fulfill the four Karush-Kuhn-Tucker conditions, namely
\begin{align*}
& \frac{\partial \mathcal{L}}{\partial w}(w, \lambda_1, \lambda_2) = 0, \quad &\lambda_1 (|w -q|^2-1) = 0, \\ 
&\lambda_1, \lambda_2 \geq 0, & \lambda_2 (|w + q|^2 -1) = 0,
\end{align*}
and Slater's condition implies the existence of Lagrange multipliers for a KKT-point of (\ref{eq:Lagrange}). 
The first KKT-condition yields
\begin{align}
- f_\eta + 2 \lambda_1 (w-q) + 2 \lambda_2 (w+q) = 0.
\label{eq:FirstOrderOptimality}
\end{align}
We note that the case $f_\eta = 0$ (which can happen on $|\eta|$-zero sets) causes the objective functional to vanish, hence in the following $f_\eta \neq 0$.

Let us first consider the case $q=0$ in which \eqref{eq:FirstOrderOptimality} yields $f_\eta = 2 (\lambda_1 + \lambda_2) w$.
In case $|w| < 1$, we find that $(\lambda_1 + \lambda_2) = 0$ and hence $f_\eta = 0$, which is a contradiction. 
In case $|w| = 1$, we obtain that $(\lambda_1 + \lambda_2) = \frac{1}{2}$ and $w = f_\eta $ from \eqref{eq:FirstOrderOptimality}, hence 
\begin{align*}
 w \cdot f_\eta = |f_\eta|^2 = 1.
\end{align*}
If $q \neq 0$, we have to distinguish three cases:\\
\textbf{1st case:} $|w - q| <1, |w + q| = 1$. \\
Thus $\lambda_1 = 0$ and (\ref{eq:FirstOrderOptimality}) yields $f_\eta = 2 \lambda_2 (w + q)$. 
Since $|w + q| = 1$ and $|f_\eta| = 1$ we deduce $\lambda_2 = \frac{1}{2}$, so $w = f_\eta - q$ 
and finally for the value of the objective function
\begin{align*}
w \cdot f_\eta = |f_\eta|^2 - f_\eta \cdot q = 1 - f_\eta \cdot q.
\end{align*}
\textbf{2nd case:} $|w +q| <1, |w -q| = 1$ \\ 
We analogously find 
\begin{align*}
w \cdot f_\eta = 1 + f_\eta \cdot q.
\end{align*}
The first two cases thus occur whenever (insert $w$ in the conditions) $\left| f_\eta \pm 2q \right| < 1$, meaning that either of the conditions has to be fulfilled. 
We calculate
\begin{align}
| f_\eta - 2q |^2 < 1 \Leftrightarrow |q|^2 < q \cdot f_\eta \Leftrightarrow |q| < \cos(\varphi).
\end{align}
Hence $q\cdot f_\eta >0$ and 
\begin{align*}
1 - q \cdot f_\eta = 1 -|q \cdot f_\eta|. 
\end{align*}
In the second case we analogously find $|q| < -\cos(\varphi)$, hence $q \cdot f_\eta <0$ and
\begin{align*}
1 + q \cdot f_\eta = 1 -|q \cdot f_\eta|, 
\end{align*}
so we may summarize the first two cases as 
\begin{align*}
w \cdot f_\eta = 1 - |q \cdot f_\eta| =  1 - |\cos(\varphi)| |q|,
\end{align*}
if $|q| < |\cos(\varphi)|$. \\

\noindent
\textbf{3rd case:} $|w -q | =1, |w +q| = 1$ \\
At first we observe that from $|w + q |^2 = |w - q|^2$ we may deduce that $w \cdot q = 0$. 
Therefore we have $|w +q| = 1 \Rightarrow |w| = \sqrt{1-|q|^2}$. 
In the third case the optimality condition (\ref{eq:FirstOrderOptimality}) reads
\begin{align}
f_\eta = 2 \lambda_1 (w -q) + 2 \lambda_2 (w +q).
\label{eq:aux1}
\end{align}
We multiply the optimality condition by $q$ and obtain
\begin{alignat*}{3}
&\qquad &&f_\eta \cdot q &&= 2 \lambda_1 (w -q) \cdot q + 2 \lambda_2 (w +q) \cdot q \\
&\Leftrightarrow && f_\eta \cdot q &&= 2 (\lambda_2 - \lambda_1)~ |q|^2 \\
&\Leftrightarrow &&2 (\lambda_2 - \lambda_1) &&= f_\eta \cdot \dfrac{q}{|q|^2}.
\end{alignat*}
Multiplying (\ref{eq:aux1}) by $w$ yields
\begin{align}
f_\eta \cdot w = 2 (\lambda_1 + \lambda_2) |w|^2
\label{eq:aux2}
\end{align}
and another multiplication of (\ref{eq:aux1}) by $f_\eta$ yields
\begin{align*}
1 = |f_\eta|^2 &= 2 (\lambda_1 + \lambda_2) w \cdot f_\eta + 2 (\lambda_2 -\lambda_1) q\cdot f_\eta \\
&= 4 (\lambda_1 +\lambda_2)^2 |w|^2 + \left( f_\eta\cdot \dfrac{q}{|q|} \right)^2\\
&= (2 (\lambda_1 +\lambda_2) |w|)^2 + |\cos(\varphi)|^2,
\end{align*}
where we inserted the previous results in the last two steps. 
We rearrange and find
\begin{align*}
2 (\lambda_1 + \lambda_2) = \sqrt{ 1 - |\cos(\varphi)|^2} |w|^{-1} =  |\sin(\varphi)| |w|^{-1},
\end{align*}
which finally leads us to 
\begin{align*}
f_\eta \cdot w &= (\lambda_1 + \lambda_2) |w|^2 = |\sin(\varphi)| |w| = |\sin(\varphi)| \sqrt{1-|q|^2}.
\end{align*}
Hence we have
\begin{align*}
(f_1 \Box f_2)(\eta) \geq (f_1^* + f_2^*)^*(\eta) \geq \int_\Omega G(f_\eta,q) \di |\eta|.
\end{align*}
It remains to show that 
\begin{align*}
 (f_1 \Box f_2)(\eta) = \inf_{z \in \M} \| \eta - z \nM + \|z\nM - \langle q, \eta - 2z \rangle  \leq \int_\Omega G(f_\eta,q) \di |\eta|.
\end{align*}
At first we observe that for any $g \in L_{|\eta|}^1(\Omega, \R^m)$ we have that $z_* := g |\eta| \in \M$, since 
\begin{align*}
 \|z_*\nM = \int_\Omega |g| \di |\eta|.
\end{align*}
By the definition of the dual norm we furthermore obtain 
\begin{align*}
 &\quad \| \eta - z_*\nM + \|z_*\nM - \langle q, \eta - 2 z_* \rangle \\ 
 &= \sup_{\substack{\phi \in \CO \\ \|\phi\|_{\infty} \leq 1}} \int_\Omega \phi \cdot (f_\eta - g) \di |\eta| 
  + \sup_{\substack{\psi \in \CO \\ \|\psi\|_{\infty} \leq 1}} \int_\Omega \psi \cdot g \di |\eta|
  - \int_\Omega q \cdot (f_\eta - 2g) \di |\eta| \\
 &\leq \int_\Omega |f_\eta - g| + |g| - q \cdot (f_\eta - 2g) \di |\eta|.
\end{align*}
For $|\eta|$-a.e. $x \in \Omega$ we define 
\begin{align*}
 H(g(x)) := |f_\eta(x) - g(x)| + |g(x)| - q(x) \cdot (f_\eta(x) - 2g(x)),
\end{align*}
where we again leave out the dependence on $x$ in the following. 
We have to distinguish four cases. 

\noindent
\textbf{1st case:} If $|q| < \cos(\varphi)$, we have $q \cdot f_\eta > 0$ and we can obtain 
\begin{align*}
 H(g) = |f_\eta| - q \cdot f_\eta = |f_\eta| - |q \cdot f_\eta| = 1 - |\cos(\varphi)||q|
\end{align*}
for $g = 0$. 

\noindent
\textbf{2nd case:} Analogously if $|q| < - \cos(\varphi)$, we have $q \cdot f_\eta < 0$ and $g = f_\eta$ leads to 
\begin{align*}
 H(g) = |f_\eta| + q \cdot f_\eta = |f_\eta| - |q \cdot f_\eta| = 1 - |\cos(\varphi)||q|
\end{align*}

\noindent
\textbf{3rd case:}
In case $|q| = 1$ we let $c > 0$ and $g = f_\eta / 2 - (cq)/2$ to find
\begin{align*}
 H(g) &= \left| \frac{f_\eta}{2} + \frac{cq}{2} \right| + \left| \frac{f_\eta}{2} - \frac{cq}{2} \right| - c |q|^2 
 = \frac{c}{2} \left( \left| q + \frac{f_\eta}{c} \right| + \left| q - \frac{f_\eta}{c} \right| - 2 \right).  
\end{align*}
Using a Taylor expansion around $q$ and the fact that $|q| = 1$ we obtain
\begin{align*}
 \left| q + \frac{f_\eta}{c} \right| &= |q| + \frac{q}{|q|} \cdot \frac{f_\eta}{c} + O(c^{-2}) 
 = 1 + \frac{q}{|q|} \cdot \frac{f_\eta}{c} + O(c^{-2}), \\ 
 \left| q - \frac{f_\eta}{c} \right| &= |q| - \frac{q}{|q|} \cdot \frac{f_\eta}{c} + O(c^{-2}) 
 = 1 - \frac{q}{|q|} \cdot \frac{f_\eta}{c} + O(c^{-2}),
\end{align*}
which leads us to 
\begin{align*}
 H(g) = \frac{c}{2} ( 2 + O(c^{-2}) - 2) = O(c^{-1}) \to 0,
\end{align*}
for $c \to \infty$. 
Hence for every $\varepsilon > 0$ there exists a $c^* > 0$ such that $H(g) \leq \varepsilon / \| \eta \nM$.

\noindent
\textbf{4th case:} Finally, if $|q| < 1$ and $|q| \geq |\cos(\varphi)|$, we can pick $g = 2 \lambda_1 (w - q)$ 
with $\lambda_1$ and $w$ being the Lagrange multiplier and dual variable from the above computations of the third case.
We recall that $|w+q| = |w-q| = 1$, $w \cdot q = 0$, $|w| = \sqrt{1 - |q|^2}$ and $f_\eta - 2 \lambda_1 (w-q) = 2 \lambda_2 (w+q)$.
A straight forward calculation yields 
\begin{align*}
 H(g) &= |f_\eta - 2 \lambda_1 (w-q)| + |2 \lambda_1 (w-q)| - q \cdot ( f_ \eta - 2 \lambda_1 (w-q) - 2 \lambda_1 (w-q)) \\
 &= |2 \lambda_2 q (w+q) | + |2 \lambda_1 (w-q)| - q \cdot ( 2 \lambda_2 (w+q) - 2 \lambda_1 (w-q)) \\
 &= 2(\lambda_2 + \lambda_1) (1 - |q|^2) \\
 &= |\sin(\varphi)|\sqrt{1-|q|^2}.
\end{align*}

Hence for $|\eta|$-a.e. $x \in \Omega$ we define $g \colon \Omega \to \R^m$ as 
\begin{align*}
 g(x) := \begin{cases}
          0, & \text{ if } |q(x)| < \cos(\varphi(x)), \\
          f_\eta(x), & \text{ if } |q(x)| < - \cos(\varphi(x)), \\
          \frac{f_\eta(x)}{2} - \frac{c^*(x)}{2}q(x), & \text{ if } |q(x)| = 1, \\
          2 \lambda_1(x) (w(x) - q(x)), & \text{ if } |q(x)| > | \cos(\varphi(x))| \text{ and } |q(x)| < 1. 
         \end{cases}
\end{align*}
For every $\alpha > 0$ we let 
\begin{align*}
 g^\alpha(x):= \begin{cases}
                g(x), &\text{ if } |g(x)| \leq \alpha, \\
                0, &\text{ else.}
               \end{cases}
\end{align*}
$g^\alpha$ is bounded $|\eta|$-a.e., thus $ g^\alpha \in L_{|\eta|}^1(\Omega, \R^m)$ and $z_*^\alpha := g^\alpha |\eta| \in \M$.
We remark that since $G$ is bounded by $1,$ $|\eta|$-a.e., we have $\int_\Omega G(f_\eta,q) \di \eta \leq \|\eta\nM$, and compute 
\begin{align*}
 (f_1 \Box f_2)(\eta) 
 &\leq \| \eta - z_*^\alpha\nM + \|z_*^\alpha\nM - \langle q, \eta - 2 z_*^\alpha \rangle \\
 &\leq \int_\Omega H(g^\alpha) \di |\eta| \\
 &= \int_{ \{ |g| \leq \alpha \} } G(f_\eta,q) + \frac{\varepsilon}{\|\eta\nM} \di |\eta| + \int_{ \{ |g| > \alpha \} } 1 - q \cdot f_\eta \di |\eta| \\
 &= \int_\Omega G(f_\eta,q) \di |\eta| + \int_{ \{ |g| \leq \alpha \} } \frac{\varepsilon}{\|\eta\nM} \di |\eta| + \int_{ \{ |g| > \alpha \} } 1 - q \cdot f_\eta - G( f_\eta,q) \di |\eta| \\
 &\leq \int_\Omega G(f_\eta,q) \di |\eta| + \int_{ \{ |g| \leq \alpha \} } \frac{\varepsilon}{\|\eta\nM} \di |\eta| + \int_{ \{ |g| > \alpha \} } 3 \di |\eta| \\
 & \to \int_\Omega G(f_\eta,q) \di |\eta| + \varepsilon
\end{align*}
for $\alpha \to \infty$. 
Since $\varepsilon$ was arbitrary, this completes the proof.
\end{proof}

\subsection{Numerical solution of PET-MRI joint reconstruction}
For the solution of the joint PET-MRI reconstruction scheme \eqref{eq:PETMRIJointReconstruction1}, \eqref{eq:PETMRIJointReconstruction2} we only need to further specify the convex conjugates of the involved data terms and the role of the sets $C_i$.
We use the notation from Section \ref{sec:petmri} in order to be consistent.
Since we constrain both the PET and the MR image to be real-valued and positive, we let $C = \R_0^+$.
Hence the full problem reads:
\begin{alignat*}{3}
u^{k+1} &\in \arg \min_u ~ \alpha \DKL(f,Au) &&+ \lambda D_{\TV}^{p_u^k}(u,u^k) && + (1-\lambda) \ICB^{p_v^k}(u,v^k) + \delta_C(u), \\
v^{k+1} &\in \arg \min_v ~ \dfrac{\beta}{2} \| Bv - g \|_2^2 &&+ \mu D_{\TV}^{p_v^k}(v,v^k) &&+ (1-\mu) \ICB^{p_u^k}(v,u^k) + \delta_C(v).	
\end{alignat*}
Since both data terms are differentiable, their convex conjugates are easy to compute. 
The convex conjugate of the scaled Kullback-Leibler divergence 
\begin{align*}
	H_{f,\alpha}(w) = \alpha \DKL (f,w) = \alpha \sum_{i=1}^M w_i - f_i \log w_i.
\end{align*} 
is given by 
\begin{align*}
	H_{f,\alpha}^*(y) = \sum_{i=1}^{M} \alpha f_i \Big[ \log \left( \frac{\alpha f_i}{\alpha-y_i}  \right) -1 \Big].
\end{align*}
The associated proximal map is 
\begin{align*}
	\prox_{\sigma H_{f,\alpha}^*}(y) = \dfrac{\alpha + y}{2} - \sqrt{\dfrac{(y - \alpha )^2}{4} + \sigma \alpha f}.
\end{align*}
For the quadratic MR data term 
\begin{align*}
	H_{g,\beta}(w) = \dfrac{\beta}{2} \| w - g \|_{\C}^2
\end{align*}
we derive the conjugate 
\begin{align*}
	H_{g,\beta}^*(y) = \dfrac{1}{2\beta} \|y\|_{\C}^2 + \Real \langle y,g \rangle 
\end{align*}
and the proximal map 
\begin{align*}
	\prox_{\sigma H_{g,\beta}^*}(y) = \dfrac{\beta y - \sigma \beta g}{\beta + \sigma}.
\end{align*} 
The primal proximal map for $\delta_{C}$ can be carried out as a simple projection onto the nonnegative real numbers. 
Regarding step sizes, we employ the diagonal preconditioning presented in \cite{Pock:DiagonalPreconditioning}, using matrix representations of our operators to precompute $\sigma$ and $\tau$.
\end{appendix}

\section*{Acknowledgements}

This work has been supported by ERC via Grant EU FP 7 - ERC Consolidator Grant 615216
LifeInverse. MB acknowledges support by the German Science Foundation DFG via SFB 656 Molecular Cardiovascular Imaging, Subproject B2, and EXC
1003 Cells in Motion Cluster of Excellence, M\"unster, Germany.
The authors thank Frank W\"ubbeling (WWU), Florian Knoll (NYU), and Matthias Erhardt (Cambridge) for helpful and stimulating discussions.
Some additional thanks go to the latter for providing easily usable code alongside his work and figuring out the nicest way to design figures for joint reconstruction papers.

\bibliography{references}{}
\bibliographystyle{abbrv}

\end{document}